\newtheoremstyle{break}{9pt}{9pt}{}{}{\bfseries}{.}{\newline}{}
\theoremstyle{break}
\newtheorem{theorem}{Theorem}[section]
\newtheorem{lemma}[theorem]{Lemma}
\newtheorem{remark}[theorem]{Remark}
\newtheorem{example}[theorem]{Example}
\newtheorem{corollary}[theorem]{Corollary}
\newcommand\suchthat{\@ifstar
  {\mathrel{}\middle\vert\mathrel{}}
  {\mid}}
\newcommand{\whitney}{\phi}
\newcommand{\vol}{\operatorname{vol}}
\newcommand{\eps}{{\epsilon}}
\newcommand{\kronecker}{{\delta}}
\newcommand{\inv}{{-1}}
\newcommand{\GL}{\operatorname{GL}}
\newcommand{\Sym}{\operatorname{Sym}}
\newcommand{\divergence}{\operatorname{div}}
\newcommand{\trace}{\operatorname{tr}}
\newcommand{\ext}{\operatorname{ext}}
\newcommand{\Id}{\operatorname{Id}}
\newcommand{\iunit}{\mathrm{i}}
\newcommand{\cartan}{{\mathsf d}}
\newcommand{\cartanlambda}{{\mathsf d}\lambda}
\newcommand{\nablalambda}{\nabla\lambda}
\newcommand{\Ned}{{\bfN\bf{d}}}
\newcommand{\RT}{{\bfR\bfT}}
\newcommand{\BDM}{{\bfB\bfD\bfM}}
\newcommand\xqedhere[2]{
  \rlap{\hbox to#1{\hfil\llap{\ensuremath{#2}}}}}
\newcommand{\Perm}{{\operatorname{Perm}}}
\DeclareMathOperator{\linhull}{span}
\newcommand{\bbC}{{\mathbb C}}
\newcommand{\bbF}{{\mathbb F}}
\newcommand{\bbN}{{\mathbb N}}
\newcommand{\bbR}{{\mathbb R}}
\newcommand{\bbZ}{{\mathbb Z}}
\newcommand{\bfB}{{\mathbf B}}
\newcommand{\bfD}{{\mathbf D}}
\newcommand{\bfM}{{\mathbf M}}
\newcommand{\bfN}{{\mathbf N}}
\newcommand{\bfR}{{\mathbf R}}
\newcommand{\bfT}{{\mathbf T}}
\newcommand{\calA}{{\mathcal A}}
\newcommand{\calB}{{\mathcal B}}
\newcommand{\calI}{{\mathcal I}}
\newcommand{\calJ}{{\mathcal J}}
\newcommand{\calP}{{\mathcal P}}
\newcommand{\calQ}{{\mathcal Q}}
\newcommand{\calS}{{\mathcal S}}
\newcommand{\frakr}{{\mathfrak r}}
\newcommand{\fraks}{{\mathfrak s}}
\begin{document}

\title
[Symmetry FEEC]
{Symmetry and Invariant Bases in\\ Finite Element Exterior Calculus}

\author{Martin W.\ Licht}

\address{\'Ecole Polytechnique F\'ed\'erale de Lausanne (EPFL), 1015 Lausanne, Switzerland}

\email{martin.licht@epfl.edu}

\thanks{This research was supported by the European Research Council through 
the FP7-IDEAS-ERC Starting Grant scheme, project 278011 STUCCOFIELDS.
This research was supported in part by NSF DMS/RTG Award 1345013 and DMS/CM Award 1262982.
The author would like to thank the Isaac Newton Institute for Mathematical Sciences, Cambridge, for support and hospitality during the programme ``Geometry, compatibility and structure preservation in computational differential equations'', where work on this paper was undertaken. This work was supported by EPSRC grant no EP/K032208/1.}

\subjclass[2000]{65N30,20C30}

\keywords{barycentric coordinates, finite element exterior calculus, monomial representation, representation theory of the symmetric group}

\begin{abstract}
We study symmetries of bases and spanning sets 
in finite element exterior calculus, using representation theory.
We want to know which vector-valued finite element spaces 
have bases invariant under permutation of vertex indices. 
The permutations of vertex indices correspond to the symmetry group of the simplex. 
That symmetry group is represented on simplicial finite element spaces by the pullback action.
We determine a natural notion of invariance 
and sufficient conditions on the dimension and polynomial degree 
for the existence of invariant bases. 
We conjecture that these conditions are necessary too. 
We utilize Djokovi\'c and Malzan's classification 
of monomial irreducible representations of the symmetric group,  
and show new symmetries of the geometric decomposition and canonical isomorphisms
of the finite element spaces. 
Explicit invariant bases with complex coefficients are constructed in dimensions two and three
for different spaces of finite element differential forms. 
 \end{abstract}

\maketitle

\section{Introduction} \label{sec:introduction}

The Lagrange finite element space over a simplex is easily defined for arbitrary polynomial degree.
The literature knows several canonical bases for higher-degree Lagrange spaces,
such as the standard nodal bases, the barycentric bases, and the Bernstein bases \cite{ainsworth2003hierarchic,ainsworth2011bernstein,kirby2012fast}.
A convenient feature of these canonical bases is their symmetry: the bases do not change if we re-number the vertices of the simplex.
Equivalently, they are invariant under pullback along the affine automorphisms of the simplex. 

While this convenient feature might easily be taken for granted, 
it fails to hold for vector-valued finite element spaces,
such as 
the Raviart-Thomas spaces,
Brezzi-Douglas-Marini spaces, and the N\'ed\'elec spaces of first and second kind
\cite{raviart1977mixed,brezzi1985two,nedelec1986new}. 
Indeed, even finding explicit bases for these vector-valued finite element spaces 
is a non-trivial topic and has only been addressed after the turn of the century
\cite{AFW1,AFWgeodecomp,gopalakrishnan2005nedelec,ervin2012computational,bentley2017explicit,licht2022basis}.
Whether an invariant basis of a given polynomial degree exists seems to be an intricate question: 
for example, 
while no such basis exists for the space of \emph{constant} vector fields over a triangle,
one easily finds such a basis for the \emph{linear} vector fields over a triangle. 
What pattern emerges for higher polynomial degrees?

The purpose of this article is to address this question: 
we present a natural notion of invariance and study the existence of invariant bases for vector-valued finite element spaces. 
As we demonstrate in this article, the existence of such bases seems to depend on the polynomial degree and the dimension. 
We identify sufficient conditions on these parameters for different families of finite element spaces, 
and we conjecture that these conditions are also necessary. 
To the author's best knowledge, no prior contributions to this fundamental topic exist.
We use a novel connection between finite element methods and group representation theory,
and a recursive construction of geometrically decomposed bases that is of independent interest. 
In what follows, 
we summarize the results and their prospective applications.
\\

We adopt the framework of finite element exterior calculus (FEEC, \cite{AFW1}),
which translates vector-valued finite element spaces into the calculus of differential forms. 
FEEC provides a unifying perspective on numerous aspects of finite element theory 
previously only known for special cases, such as 
convergence estimates \cite{AFW2,arnold2012mixed,arnold2013mixed,arnold2013finite}, 
approximation theory \cite{christiansen2008smoothed,HS1,licht2019smoothed,licht2016mixed},
and a posteriori error estimation \cite{demlow2014posteriori}.

The fundamental connection to representation theory is as follows. 
Every permutation of the vertices of a simplex 
corresponds to a unique affine automorphism of that simplex,
and the pullback along that automorphism acts on differential forms. 
Associating permutations to pullbacks in this manner preserves the group-structure. 
Hence representation theory enters the picture naturally: 
the permutation group is represented by the pullback operation over differential forms. 

It turns out that a satisfying theory of invariant bases involves complex coefficients. 
Thus our notion of invariance in this article means \emph{invariant under the action of the symmetric group
up to multiplication by complex units}. 
In the language of representation theory, we are interested under which conditions
the action of the symmetric group 
can be represented by \emph{monomial matrices} with real or complex coefficients \cite{ore1942theory}.
The transition to complex numbers reveals interesting structures: 
for example, the constant complex vector fields over a triangle have a basis
invariant up to multiplication by complex roots of unity. 
The language of differential forms is essential for our exposition.
\\

At the heart of our analysis is a new recursive construction of finite element bases,
which is interesting in its own right. 
We construct invariant bases for finite element spaces of higher polynomial degree 
via reduction to the case of lower polynomial degree. 
For that purpose, 
we analyze how simplicial symmetries interact 
with two fundamental concepts in finite element exterior calculus. 
On the one hand, 
we recall the geometric decomposition of the finite element spaces \cite{AFWgeodecomp},
\begin{align*}
 \calP_{r}\Lambda^{k}(T)
 =
 \bigoplus_{F \subseteq T}
 \ext^{k,r}_{F,T} \mathring\calP_{r}\Lambda^{k}(F)
 ,
 \quad
 \calP_{r}^{-}\Lambda^{k}(T)
 =
 \bigoplus_{F \subseteq T}
 \ext^{k,r,-}_{F,T} \mathring\calP^{-}_{r}\Lambda^{k}(F)
 . 
\end{align*}
We prove that the traces and extension operators commute with the pullbacks along simplicial symmetries. 
In particular, the geometrically decomposed bases are invariant if the individual summands are. 
Therefore, we can construct geometrically decomposed invariant bases from invariant bases with boundary conditions. 
On the other hand, we recall the canonical isomorphisms over an $n$-dimensional simplex $T$ \cite{AFW1}, namely,
\begin{gather*} 
 \calP_r\Lambda^{k}(T)
 \simeq
 \mathring\calP^{-}_{r+k+1}\Lambda^{n-k}(T)
 ,
 \quad
 \calP^{-}_{r+1}\Lambda^{k}(T)
 \simeq
 \mathring\calP_{r+k+1}\Lambda^{n-k}(T)
 .
\end{gather*} 
These isomorphisms are natural in the sense that they preserve the canonical spanning sets \cite{licht2022basis}. 
We prove that they commute with the simplicial symmetries up to signs and thus preserve invariant bases. 
In particular, we get invariant bases for finite element spaces with boundary conditions 
from invariant bases for finite element spaces of generally lower polynomial degree. 
Combining these two ideas, we recursively construct invariant bases,
using invariant bases that we have discovered in the necessary base cases. 
\\

The aforementioned base cases refer to the finite element spaces of lowest polynomial degree, that is, constant fields. 
Djokovi\'c and Malzan's classification of monomial irreducible representations of the symmetric group \cite{djokovic1975monomial}
shows that invariant bases for constant fields exist only in special cases: 
the scalar and volume forms, 
the constant differential forms up to dimension $3$,
and 
constant $2$-forms over $4$-simplices.
We outline the invariant bases for constant fields,
using vector calculus notation.
Let us write $\lambda_{i}$ for the barycentric coordinate associated to the $i$-th vertex of a tetrahedron or a triangle. 

Over a tetrahedron, the three vector fields 
\begin{gather} \label{intro:basis:constant1forms3D}
  \begin{gathered}
    \psi_{w} =   \nablalambda_0 - \nablalambda_1 + \nablalambda_2 - \nablalambda_3,
    \quad
    \psi_{p} =   \nablalambda_0 + \nablalambda_1 - \nablalambda_2 - \nablalambda_3,
    \\\ 
    \psi_{k} =   \nablalambda_0 - \nablalambda_1 - \nablalambda_2 + \nablalambda_3 
  \end{gathered}
\end{gather}
are a basis for the constant vector fields,
invariant up to signs under renumbering of vertices. 
Similarly, the three constant cross products 
\begin{align} \label{intro:basis:constant2forms3D}
  \psi_{w} \times \psi_{p},
  \quad
  \psi_{w} \times \psi_{k},
  \quad
  \psi_{p} \times \psi_{k}
\end{align}
are a basis for the constant pseudovector fields over a tetrahedron, 
again invariant up to signs under renumbering.
Over a triangle,
the transition to complex coefficients reveals 
that the two constant vector fields 
\begin{gather} \label{intro:basis:constant1forms2D}
  \begin{gathered}
    \theta_{0} = \nablalambda_0 + e^{2\iunit\pi/3}  \nablalambda_1 + e^{-2\iunit\pi/3} \nablalambda_2,
    \\ 
    \theta_{1} = \nablalambda_0 + e^{-2\iunit\pi/3} \nablalambda_1 +  e^{2\iunit\pi/3} \nablalambda_2 
  \end{gathered}
\end{gather}
are a basis for the \emph{complex} constant vector fields,
invariant under renumbering of vertices up to cubic roots of unity. 
We will also encounter a basis for the constant bivector fields over a four-dimensional hypertetrahedron
invariant up to quartic roots of unity.
\\

Starting from these base cases,
we recursively construct bases for finite element spaces of differential forms. 
Inspection of the base cases then reveals whether 
the construction yields bases invariant up to complex units or even up to signs:
this generally depends on the simplex dimension and the polynomial degree.

As a convenience for the reader, 
we summarize the application of our theory to common (real-valued) finite element spaces below.
Here, we use the language of vector analysis and the following notation:
$\lambda_i$ is again the barycentric coordinate associated with the $i$-th vertex,
$\phi_{ij}$ and $\phi_{ijk}$ are Whitney forms with the respective index sets, 
and $A(r,n)$ are the multiindices of weight $r$ in the indices $\{0, 1, \dots, n\}$;
see also Section~\ref{sec:preliminaries}.
The following finite element spaces have bases that are invariant up to sign changes 
under reordering of the vertices: 
\begin{itemize}
 \item 
 The Brezzi-Douglas-Marini space of degree $r$ over a triangle $T$,
 \begin{align*}
  \BDM_{r}(T) := \linhull\{ \; \lambda^{\alpha} \nablalambda_{i} \suchthat \alpha \in A(r,2), 0 \leq i \leq 2 \; \}
 \end{align*}
 if $r \notin 3 \bbN_{0}$.
 \item 
 The Raviart-Thomas space of degree $r$ over a triangle $T$,
 \begin{align*}
  \RT_{r}(T) := \linhull\{ \; \lambda^{\alpha} \phi_{ij} \suchthat \alpha \in A(r-1,2), 0 \leq i < j \leq 2 \; \}
 \end{align*}
 if $r \notin 3 \bbN_{0} + 2$.
\item 
 The divergence-conforming Brezzi-Douglas-Marini space of degree $r$ over a tetrahedron $T$,
 \begin{align*}
  \BDM_{r}(T) := \linhull\{ \; \lambda^{\alpha} \nablalambda_{i} \times \nablalambda_{j} \suchthat \alpha \in A(r,3), 0 \leq i < j \leq 3 \; \}
 \end{align*}
 if $r \in \{ 0, 1, 2, 4, 5, 8 \}$.
 \item 
 The divergence-conforming Raviart-Thomas space of degree $r$ over a tetrahedron $T$,
 \begin{align*}
  \RT_{r}(T) := \linhull\{ \; \lambda^{\alpha} \phi_{ijk} \suchthat \alpha \in A(r-1,3), 0 \leq i < j < k \leq 3 \; \}
 \end{align*}
 if $r \in \{ 0, 1, 2, 3, 4, 6, 7, 10 \}$.
 \item 
 The curl-conforming N\'ed\'elec space of the first kind of degree $r$ over a tetrahedron $T$, \begin{align*}
  \Ned_{r}^{\rm fst}(T) := \linhull\{ \; \lambda^{\alpha} \phi_{ij} \suchthat \alpha \in A(r-1,3), 0 \leq i < j \leq 3 \; \}
 \end{align*}
 if $r \in \{ 0, 1, 3, 4, 7 \}$. \item 
 The curl-conforming N\'ed\'elec space of the second kind of degree $r$ over a tetrahedron $T$, \begin{align*}
  \Ned_{r}^{\rm snd}(T) := \linhull\{ \; \lambda^{\alpha} \nablalambda_{i} \suchthat \alpha \in A(r,3), 0 \leq i \leq 3 \; \}
 \end{align*}
 if $r \in \{ 0, 1, 2, 4, 5, 8 \}$.
\end{itemize}
However, the complex-valued versions of these finite element spaces 
have bases invariant up to multiplication by cubic roots of unity,
irrespective of the polynomial degree. 

We conjecture that the above list of invariant bases is exhaustive and discuss some partial results in that regard.
As we show, if an invariant basis is already geometrically decomposed, 
in an intuitive sense formalized in the article, then we can reverse the recursive argument. 
For example, a basis of $\calP_{3}\Lambda^{1}(T)$ over a triangle cannot be geometrically decomposed and invariant up to signs
because otherwise we could construct a basis of $\calP_{0}\Lambda^{1}(T)$ invariant up to signs.
Similarly, a geometrically decomposed basis of $\calP_{r}\Lambda^{1}(T)$ over a tetrahedron that is invariant up to signs
gives rise to a basis of $\calP_{r}\Lambda^{1}(F)$ over a face $F$ invariant up to signs.
More generally, the recursive argument gives necessary conditions on bases to be both geometrically decomposed and invariant up to signs.
This does not rule out the existence of bases invariant up to signs that are not geometrically decomposed. 
\\

Bases for finite element spaces have been the subject of research for a long time. 
Bases for vector-valued finite element spaces, 
such as Brezzi-Douglas-Marini spaces, Raviart-Thomas spaces, or N\'ed\'elec spaces
have been stated explicitly only very recently \cite{AFW1,AFWgeodecomp,gopalakrishnan2005nedelec,ervin2012computational,bentley2017explicit,licht2022basis}. 
The choice of bases influences the condition numbers and sparsity properties of finite element matrices
\cite{ainsworth2003hierarchic,schoberl2005high,beuchler2012sparsity}. 
The present contribution continues previous research on bases and spanning sets in finite element exterior calculus \cite{licht2022basis}.

The invariance of bases under renumbering of the vertices
is a fundamental aspect of finite element spaces. 
It is not an issue for scalar-valued finite element spaces
but becomes highly nontrivial for vector-valued finite element spaces,
and no prior publication systematically discusses this topic. 
We remark that the seminal article of Arnold, Falk, and Winther \cite{AFW1}
utilized techniques of representation theory to classify 
the affinely invariant finite-dimensional vector spaces of polynomial differential forms.

Invariant bases are not only theoretically interesting  
but also of natural practical interest. 
Suppose that the triangulation has a significant share of regular triangles or tetrahedra:
if the bases exhibit the same geometric symmetries, 
then redundancies in the matrix coefficients 
can further reduce the assembly time of the local high-degree finite element matrices
and, perhaps more importantly, their memory footprint. 
Moreover, preliminary calculations indicate that invariant bases 
have natural orthogonality relations in such highly regular settings;
see also Example~\ref{example:interestingproperties}.
This hints at good conditioning of the bases 
in broader, more common settings when the triangulations are less regular. 
While we rely on a recursive basis construction, 
we notice that certain recursive structures already enable fast algorithms for finite element operators \cite{kirby2012fast,kirby2014low} (see also \cite{ainsworth2011bernstein}),
to reduce the computational complexity of higher-degree methods. 

Finite element bases with complex coefficients 
presumably suit best 
where complex coefficients emerge naturally, 
such as numerical electromagnetism 
or complex-shifted Laplacian Helmholtz solvers \cite{cools2017optimality}. 
Apart from computational studies, 
future research will study symmetric bases
on cubical finite element spaces \cite{arnold2013finite,arnold2011serendipity,gillette2019trimmed}
and the interaction of simplicial symmetries with resolutions of finite element spaces \cite{christiansen2016high}.
Moreover,
constructing symmetric degrees of freedom is a natural follow-up endeavor \cite{ainsworth2015bernstein}. 
\\

Some aspects of our analysis are of broader interest in representation theory. 
Our recursive construction showcases new aspects 
of the representation theory of the symmetric group. 
The notion of monomial representation is central to our contribution. 
However, monomial representations do not seem to be a standard topic 
in introductory textbooks on representation theory,
and only few articles approach constructive aspects of monomial representations
(see \cite{puschel1999constructive,puschel2002decomposing}). 
We also remark that groups of monomial matrices over finite fields 
have found use in cryptography and coding theory \cite{garcia2017monomial}.
The representation theory of groups has had various applications throughout numerical and computational mathematics,
such as in geometric integration theory \cite{StructPresDisc,stiefel2012group,munthe2016groups}
and artificial neural networks \cite{bloem2019probabilistic}. 
Our application of representation theory in finite element methods adds a new entry to that list.
\\

The remainder of this work is structured as follows. 
Important preliminaries on combinatorics, exterior calculus, and polynomial differential forms 
are summarized in Section \ref{sec:preliminaries}. 
We review elements of representation theory in Section \ref{sec:representationtheory}. 
In Section~\ref{sec:invariance} we establish first results on the coordinate transformation of polynomial differential forms. 
In Section~\ref{sec:lowestordercase}
we study invariant bases and spanning sets for lowest-degree finite element spaces. 
We discuss the symmetry properties of the canonical isomorphisms in Section~\ref{sec:canonicalisomorphism}. 
We discuss extension operators, geometric decompositions, and their symmetry properties 
in Section~\ref{sec:extension}. 
Putting these results together, 
Section~\ref{sec:recursion} discusses the recursive construction of invariant bases.

\section{Notation and Definitions}
\label{sec:preliminaries}

We introduce and review notions from combinatorics, simplicial geometry,
and differential forms over simplices. Parts of this section summarize results in \cite{licht2022basis}. 
We refer to Arnold, Falk, and Winther~\cite{AFW1,AFWgeodecomp}
and to Hiptmair~\cite{hiptmair2002finite} for further background 
on polynomial differential forms.

\subsection{Combinatorics} \label{subsec:combinatorics}

For $m, n \in \bbZ$ we write 
$[m:n] = \{ i \in \bbZ \suchthat m \leq i \leq n\}$,
which may be the empty set, 
and let $\eps(m,n) = 1$ if $m < n$ and $\eps(m,n) = -1$ if $m > n$. 
The set of all permutations of $[m:n]$ is written $\Perm(m:n)$
and we abbreviate $\Perm(n) := \Perm(0:n)$. 
We let $\eps(\pi) \in \{ -1, 1 \}$ be the \emph{sign} of any permutation $\pi \in \Perm(m:n)$.

We write $A(n)$ for the set of multiindices over $[0:n]$,
that is, the set of functions $\alpha : [0:n] \rightarrow \bbN_{0}$. 
For any $\alpha \in A(n)$,
\begin{gather*}
 \lvert \alpha \rvert := \sum_{i=0}^{n} \alpha(i),
 \quad
 [\alpha] := \left\{\; i \in [0:n] \suchthat \alpha(i) > 0 \;\right\},
\end{gather*}
and we let $\lfloor\alpha\rfloor$ denote the minimal element of $[\alpha]$  
provided that $[\alpha]$ is not empty, and $\lfloor\alpha\rfloor := \infty$ otherwise.
We let $A(r,n)$ be the set of those $\alpha \in A(n)$ for which $\lvert \alpha \rvert = r$.
The sum $\alpha+\beta$ of $\alpha, \beta \in A(n)$
is defined in the obvious manner.
We let $\delta_{p} : \bbZ \rightarrow \bbN$ be the function that equals $1$ at $p$
and is zero otherwise. 
When $\alpha \in A(r,n)$ and $p \in [0:n]$,
then $\alpha + p \in A(r+1,n)$ is notation for $\alpha + \delta_{p}$. 
Similarly, when $p \in [\alpha]$, then 
$\alpha - p \in A(r-1,n)$ is notation for $\alpha - \delta_{p}$.
\\

We let $\Sigma(a:b,m:n)$ be the set of strictly ascending mappings from $[a:b]$ to $[m:n]$.
We call those mappings \emph{alternator indices}. 
By convention, $\Sigma(a:b,m:n) := \{\emptyset\}$ whenever $a > b$.
For any $\sigma \in \Sigma(a:b,m:n)$ we let 
\begin{align*}
 [\sigma] := \left\{\; \sigma(i) \suchthat i \in [a:b] \; \right\},
\end{align*}
and we write $\lfloor\sigma\rfloor$ for the minimal element of $[\sigma]$  
provided that $[\sigma]$ is not empty, and $\lfloor\sigma\rfloor := \infty$ otherwise.
Furthermore, if $q \in [m:n] \setminus [\sigma]$, then we write $\sigma + q$
for the unique element of $\Sigma(a:(b+1),m:n)$ with image $[\sigma]\cup\{q\}$.
In that case, we also write $\eps(q,\sigma)$ for the sign of the permutation 
that sorts the sequence $q, \sigma(a), \dots, \sigma(b)$ in ascending order,
and we write $\eps(\sigma,q)$ for the sign of the permutation 
that sorts the sequence $\sigma(a), \dots, \sigma(b), q$ in ascending order.
Note also that $\eps(\sigma,q) = (-1)^{b-a} \eps(q,\sigma)$. 
Similarly, if $p \in [\sigma]$, then we write $\sigma - p$ 
for the unique element of $\Sigma(a:b-1,m:n)$ with image $[\sigma]\setminus\{p\}$.

We abbreviate $\Sigma(k,n) = \Sigma(1:k,0:n)$ and $\Sigma_{0}(k,n) = \Sigma(0:k,0:n)$. 
If $n$ is understood and $k, l \in [0:n]$,
then for any $\sigma \in \Sigma(k,n)$
we define $\sigma^c \in \Sigma_{0}(n-k,n)$
by the condition $[\sigma]\cup[\sigma^c] = [0:n]$,
and for any $\rho \in \Sigma_{0}(l,n)$
we define $\rho^c \in \Sigma(n-l,n)$
by the condition $[\rho]\cup[\rho^c] = [0:n]$.
In particular, $\sigma^{cc} = \sigma$ and $\rho^{cc} = \rho$.
We emphasize that $\sigma^c$ and $\rho^c$ depend on $n$,
which we suppress in the notation.
When $\sigma \in \Sigma(k,n)$ and $\rho \in \Sigma_{0}(l,n)$
with $[\sigma]\cap[\rho] = \emptyset$,
then $\eps(\sigma,\rho)$ is the sign of the permutation ordering the sequence
$\sigma(1),\dots,\sigma(k),\rho(0),\dots,\rho(l)$ in ascending order.

More generally, when $\tau : [a:b] \rightarrow [m:n]$ is an injective function, then 
we let $[\tau]$ be the range of $\tau$, 
and we let $\eps(\tau) \in \{-1,1\}$
be the sign of the permutation that sorts the sequence $\tau(a), \dots, \tau(b)$ in ascending order.

\subsection{Simplices} \label{subsec:simplices}

Let $n \in \bbN_{0}$. 
An \emph{$n$-dimensional simplex} $T$ is the convex closure 
of pairwise distinct affinely independent points $v^{T}_0, \dots, v^{T}_n$ in Euclidean space,
called the \emph{vertices} of $T$. 
We call $F \subseteq T$ a \emph{subsimplex} of $T$
if the set of vertices of $F$ is a subset of the set of vertices of $T$. 
We write $\imath(F,T) \colon F \rightarrow T$ for the set inclusion of $F$ into $T$.

As an additional structure, 
we assume that the vertices of all simplices are ordered.
For simplicity and without loss of generality, 
we assume that all simplices have vertices ordered compatibly to the order 
of vertices on their subsimplices. 
Suppose that $F$ is an $m$-dimensional subsimplex of $T$
with ordered vertices $v^{F}_0, \dots, v^{F}_m$. 
With a mild abuse of notation, we let $\imath(F,T) \in \Sigma_{0}(m,n)$
be the strictly ascending mapping defined by $v^{T}_{\imath(F,T)(i)} = v^{F}_{i}$.
Here, each vertex index of $F$ is mapped to the corresponding vertex index of $T$.

\subsection{Barycentric Coordinates and Differential Forms} \label{subsec:barycentriccoordinates}

Let $T$ be a simplex of dimension $n$. 
Following the notation of \cite{AFW1} and letting $k \in \bbZ$,
we denote by $\Lambda^{k}(T)$ the space of \emph{differential $k$-forms} over $T$ 
with real coefficients whose derivatives of all orders are smooth and bounded. 
Recall that these mappings take values in the $k$-th exterior power 
of the dual of the tangential space of the simplex $T$. 
In the case $k=0$, the space $\Lambda^{0}(T) = C^{\infty}(T)$
is just the space of smooth functions over $T$ with uniformly bounded derivatives. 
We use that $\Lambda^{k}(T) = \{0\}$ unless $0 \leq k \leq n$ without further mention. 

We write $\bbR\Lambda^{k}(T) = \Lambda^{k}(T)$
and let $\bbC\Lambda^{k}(T)$ denote the complexification of $\bbR\Lambda^{k}(T)$.
All the algebraic operations defined in the following 
apply to $\bbC\Lambda^{k}(T)$ completely analogously.

We recall the \emph{exterior product} $\omega \wedge \eta \in \Lambda^{k+l}(T)$
for $\omega \in \Lambda^{k}(T)$ and $\eta \in \Lambda^{l}(T)$
and that it satisfies $\omega \wedge \eta = (-1)^{kl} \eta \wedge \omega$. 
We let $\cartan \colon \Lambda^{k}(T) \rightarrow \Lambda^{k+1}(T)$
denote the \emph{exterior derivative}. 
It satisfies $\cartan\left( \omega \wedge \eta \right) = \cartan\omega \wedge \eta + (-1)^{k} \omega \wedge \cartan\eta$
for $\omega \in \Lambda^{k}(T)$ and $\eta \in \Lambda^{l}(T)$. 
We also recall that the integral $\int_{T} \omega$ of a differential $n$-form over $T$ is well-defined
upon the choice of any orientation of the simplex $T$.
\\

The \emph{barycentric coordinates} $\lambda^{T}_0, \dots, \lambda^{T}_n \in \Lambda^{0}(T)$
are the unique affine functions over $T$ that satisfy the \emph{Lagrange property}
\begin{align} \label{math:lagrangeproperty}
 \lambda^{T}_{i}( v_j ) = \delta_{ij},
 \quad
 i,j \in [0:n].
\end{align}
The barycentric coordinate functions of $T$ are linearly independent 
and constitute a partition of unity:
\begin{align} \label{math:partitionofunity}
 1 = \lambda^{T}_0 + \dots + \lambda^{T}_n
 .
\end{align}
We write $\cartanlambda_0^{T}, \cartanlambda_1^{T}, \dots, \cartanlambda_n^{T} \in \Lambda^{1}(T)$ 
for the exterior derivatives of the barycentric coordinates. 
These are differential $1$-forms and constitute a partition of zero: 
\begin{align} \label{math:partitionofzero}
 0 = \cartanlambda^{T}_0 + \dots + \cartanlambda^{T}_n
 .
\end{align}
It can be shown that this is, up to scaling, the only linear dependence 
between the exterior derivatives of the barycentric coordinate functions.
\\

Several classes of differential forms over $T$
that are expressed in terms of the barycentric coordinates and their exterior derivatives.
When $r \in \bbN_{0}$ and $\alpha \in A(r,n)$, 
then the corresponding \emph{barycentric polynomial} over $T$ is 
\begin{align} \label{math:definitionbarycentricpolynomial}
 \lambda_{T}^{\alpha}
 &:=
 \prod_{ i=0 }^{n} (\lambda_i^{T})^{\alpha(i)}
 .
\end{align}
When $a,b \in \bbN_{0}$ and $\sigma \in \Sigma(a:b,0:n)$,
the corresponding \emph{barycentric alternator} is 
\begin{align} \label{math:definitionbarycentricdifferentialform}
 \cartanlambda^{T}_{\sigma} 
 :=
 \cartanlambda^{T}_{\sigma(a)} \wedge\dots\wedge \cartanlambda^{T}_{\sigma(b)}
 .
\end{align}
Here, we treat the special case $\sigma = \emptyset$ by defining $\cartanlambda^{T}_{\emptyset} = 1$.

Whenever $a,b \in \bbN_{0}$ and $\rho \in \Sigma(a:b,0:n)$,
then the corresponding \emph{Whitney form} is 
\begin{align} \label{math:definitionwhitneyform}
 \whitney^{T}_{\rho}
 :=
 \sum_{p \in [\rho]} \eps(p,\rho-p) \lambda^{T}_{p} \cartanlambda^{T}_{\rho-p}
 .
\end{align}
In the special case 
that $\rho \colon [0:n] \rightarrow [0:n]$ is the single member of $\Sigma_{0}(n,n)$,
we write $\whitney_{T} := \whitney_{\rho}$ for the associated Whitney form. 
In what follows, the polynomials \eqref{math:definitionbarycentricpolynomial}
and their products with \eqref{math:definitionbarycentricdifferentialform} and \eqref{math:definitionwhitneyform}
are called \emph{barycentric differential forms} over $T$. 

We simplify the notation whenever there is no danger of ambiguity: \begin{align*}
  \lambda_i \equiv \lambda^{T}_i,
  \quad
  \lambda^{\alpha} \equiv \lambda_{T}^\alpha,
  \quad
  \cartanlambda_{\sigma} \equiv \cartanlambda_\sigma^{T},
  \quad
  \lambda_{\sigma} \equiv \lambda_\sigma^{T},
  \quad 
  \whitney_{\rho} \equiv \whitney_\rho^{T}
  .
\end{align*}
With our choice of notation, the simplex $T$ is always a superscript except for the barycentric monomials.

\subsection{Traces} \label{subsec:traces}

Let $T$ be an $n$-dimensional simplex and let $F \subseteq T$ be an $m$-dimensional subsimplex of $T$.
The trace from $T$ to $F$ is the mapping $\trace_{T,F} \colon \Lambda^{k}(T) \rightarrow \Lambda^{k}(F)$,
which is the pullback along the inclusion $\imath(F,T) \colon F \rightarrow T$ introduced above.
The trace commutes with the exterior derivative: 
$\trace_{T,F} \cartan \omega = \cartan \trace_{T,F} \omega$ for all $\omega \in \Lambda^{k}(T)$.

The trace does not depend on the order of the vertices.
However, taking into account the ordering of the vertices
provides explicit formulas for traces of barycentric differential forms.
Write $[\imath(F,T)]$ for the set of indices of those vertices of $T$ that are also vertices of $F$.

Consider $i \in [0:n]$.
If $i \notin [\imath(F,T)]$, then $v_{i}^{T}$ is a vertex of $T$ that is not in $F$,
and in that case $\trace_{T,F} \lambda^{T}_i = 0$.
If instead $i \in [\imath(F,T)]$,
then there exists $j \in [0:m]$ such that $i = \imath(F,T)(j)$,
and in that case $\trace_{T,F} \lambda^{T}_i = \lambda_j^{F}$. Analogous observations hold for the exterior derivatives of the barycentric coordinates.

Let $\alpha \in A(r,n)$ be a multiindex. 
If $[\alpha] \nsubseteq [\imath(F,T)]$, 
then $\trace_{T,F} \lambda_{T}^\alpha = 0$.
If instead $[\alpha] \subseteq [\imath(F,T)]$,
then there exists a unique $\widehat\alpha \in A(r,m)$ with
$\widehat\alpha = \alpha \circ \imath(F,T)$,
and hence 
\begin{align}
 \trace_{T,F} \lambda_{T}^{\alpha} = \lambda_{F}^{\widehat\alpha}
 .
\end{align}
Let $\sigma \in \Sigma(a:b,0:n)$ be an alternator index. 
If $[\sigma] \nsubseteq [\imath(F,T)]$, 
then $\trace_{T,F} \cartanlambda^{T}_\sigma = 0$.
If instead $[\sigma] \subseteq [\imath(F,T)]$,
then there exists a unique $\widehat\sigma \in \Sigma(a:b,0:m)$ with
$\imath(F,T) \circ \widehat\sigma = \sigma$,
and then 
\begin{align}
 \trace_{T,F} \cartanlambda_{\sigma}^{T} = \cartanlambda_{\widehat\sigma}^{F},
 \quad 
 \trace_{T,F} \whitney_{\sigma}^{T} = \whitney_{\widehat\sigma}^{F}. 
\end{align}

\subsection{Finite Element Spaces over Simplices} \label{subsec:spaces}

This subsection summarizes results about spanning sets and bases 
in finite element exterior calculus. Consider an $n$-dimensional simplex $T$,
a polynomial degree $r \in \bbZ$, 
and a form degree $k \in \bbZ$.
Suppose that either $\bbF = \bbR$ or $\bbF = \bbC$. 
We introduce the sets of polynomial differential forms 
\begin{subequations} \label{math:canonicalspanningset}
\begin{align}
 \label{math:canonicalspanningset:poly}
 \calS\calP_r\Lambda^{k}(T)
 &:=
 \left\{\;
  \lambda_{T}^{\alpha} \cartanlambda^{T}_{\sigma} 
  \suchthat* 
  \alpha \in A(r,n), 
  \;
  \sigma \in \Sigma(k,n)
 \;\right\},
 \\
 \label{math:canonicalspanningset:whitney}
 \calS\calP^{-}_r\Lambda^{k}(T)
 &:=
 \left\{\;
  \lambda_{T}^{\alpha} \whitney^{T}_{\rho}
  \suchthat* 
  \alpha \in A(r-1,n), \; \rho \in \Sigma_{0}(k,n)
 \;\right\},
 \\
\label{math:canonicalspanningset:polymathring}
 \calS\mathring\calP_r\Lambda^{k}(T)
 &:=
 \left\{\;
  \lambda_{T}^{\alpha}\cartanlambda^{T}_{\sigma}
  \suchthat* 
  \begin{array}{l}
   \alpha \in A(r,n), \; \sigma \in \Sigma(k,n),
   \\{}
   [\alpha]\cup[\sigma] = [0:n]
  \end{array}
 \;\right\},
 \\
 \label{math:canonicalspanningset:whitneymathring}
 \calS\mathring\calP^{-}_r\Lambda^{k}(T)
 &:=
 \left\{\; 
  \lambda_{T}^{\alpha}\whitney_{\rho}^{T}
  \suchthat* 
  \begin{array}{l}
   \alpha \in A(r-1,n), \; \rho \in \Sigma_{0}(k,n),
   \\{}
   [\alpha]\cup[\rho]=[0:n]
  \end{array}
 \;\right\}. 
\end{align}
\end{subequations}
These are important spanning sets: 
the linear hulls\footnote{These linear hulls are trivial if $r < 0$ or $k \notin [0:n]$.}
of the first two sets give rise to the standard finite element spaces 
of finite element exterior calculus with coefficients in the field $\bbF$: 
\begin{gather*}  \bbF\calP_r\Lambda^{k}(T)
 :=
 \linhull_{\bbF}\calS\calP_r\Lambda^{k}(T),
 \quad
 \bbF\calP^{-}_r\Lambda^{k}(T)
 :=
 \linhull_{\bbF}\calS\calP^{-}_r\Lambda^{k}(T).
\end{gather*}
Requiring the traces to vanish along the simplex boundary defines subspaces
\begin{subequations}
\begin{align*}
 \bbF\mathring\calP_r\Lambda^{k}(T)
 &:= 
 \left\{\; 
  \omega \in \bbF\calP_r\Lambda^{k}(T)
  \suchthat*  
  \forall F \subsetneq T : \trace_{T,F} \omega = 0 
 \;\right\}
 ,
 \\
 \bbF\mathring\calP_r^{-}\Lambda^{k}(T)
 &:= 
 \left\{\; 
  \omega \in \bbF\calP_r^{-}\Lambda^{k}(T)
  \suchthat*  
  \forall F \subsetneq T : \trace_{T,F} \omega = 0 
 \;\right\}
 .
\end{align*}
\end{subequations}
We know explicit spanning sets for these spaces as well. When $r \geq 1$, then 
\begin{gather*}  \bbF\mathring\calP_r\Lambda^{k}(T)
 =
 \linhull_{\bbF}\calS\mathring\calP_r\Lambda^{k}(T),
 \quad
 \bbF\mathring\calP^{-}_r\Lambda^{k}(T)
 =
 \linhull_{\bbF}\calS\mathring\calP^{-}_r\Lambda^{k}(T).
\end{gather*}
The first equation is also true when $r = 0$ and $k<n$, and the second equation is also true when $r = 0$;
the vector spaces are trivial in those cases.
The sets \eqref{math:canonicalspanningset} are called the \emph{canonical spanning sets}.
\footnote{This is a minor abuse of terminology: the space $\bbF\mathring\calP_0\Lambda^{n}(T)$ is one-dimensional but the corresponding set $\calS\mathring\calP_0\Lambda^{k}(T)$ is empty.}

The canonical spanning sets are generally not linearly independent and hence not bases. 
However, further constraining the indices in the canonical spanning sets
produces the following bases (see~\cite{licht2022basis}). 
When $r \geq 1$, we define the sets of barycentric differential forms 
\begin{subequations} \label{math:canonicalbases}
\begin{align}
 \label{math:canonicalbases:polybetterbasis}
 \calB\calP_r\Lambda^{k}(T)
 &:=
 \left\{\;
  \lambda_{T}^{\alpha} \cartanlambda^{T}_{\sigma}
  \suchthat* 
  \begin{array}{l}
   \alpha \in A(r,n), \; \sigma \in \Sigma(k,n), 
   \\
   \lfloor\alpha\rfloor \notin [\sigma]
  \end{array}
 \;\right\},
 \\
 \label{math:canonicalbases:whitneybasis}
 \calB\calP^{-}_r\Lambda^{k}(T)
 &:=
 \left\{\; 
  \lambda_{T}^{\alpha}\whitney_{\rho}^{T}
  \suchthat* 
   \begin{array}{l}
    \alpha \in A(r-1,n), \; \rho \in \Sigma_{0}(k,n),
    \\
    \lfloor\alpha\rfloor \geq \lfloor\rho\rfloor 
   \end{array} 
 \;\right\},
 \\
 \label{math:canonicalbases:polymathringbasis}
 \calB\mathring\calP_r\Lambda^{k}(T)
 &:=
 \left\{\;
  \lambda_{T}^{\alpha}\cartanlambda^{T}_{\sigma}
  \suchthat* 
  \begin{array}{l}
   \alpha \in A(r,n), \; \sigma \in \Sigma(k,n), 
   \\
   \lfloor\alpha\rfloor \notin [\sigma], \; [\alpha]\cup[\sigma] = [0:n]
  \end{array}
 \;\right\},
 \\
 \label{math:canonicalbases:whitneymathringbasis}
 \calB\mathring\calP^{-}_r\Lambda^{k}(T)
 &:=
 \left\{\;
  \lambda_{T}^{\alpha}\whitney_{\rho}^{T}
  \suchthat* 
   \begin{array}{l}
    \alpha \in A(r-1,n), \; \rho \in \Sigma_{0}(k,n),
    \\
    \lfloor\alpha\rfloor \geq \lfloor\rho\rfloor, \; [\alpha]\cup[\rho]=[0:n]
   \end{array} 
 \;\right\}.
\end{align}
\end{subequations}
A particular feature of these bases and spanning sets are their inclusion relations.
On the one hand, the bases are subsets of the spanning sets,
\begin{gather*}
 \calB\calP_r\Lambda^{k}(T)
 \subseteq  
 \calS\calP_r\Lambda^{k}(T), 
 \quad 
 \calB\calP_r^{-}\Lambda^{k}(T)
 \subseteq  
 \calS\calP_r^{-}\Lambda^{k}(T), 
 \\ 
 \calB\mathring\calP_r\Lambda^{k}(T)
 \subseteq  
 \calS\mathring\calP_r\Lambda^{k}(T),
 \quad 
 \calB\mathring\calP_r^{-}\Lambda^{k}(T)
 \subseteq  
 \calS\mathring\calP_r^{-}\Lambda^{k}(T).
\end{gather*}
On the other hand, the generators for the spaces with boundary conditions
are contained in the generators for the unconstrained spaces, 
\begin{gather*}
 \calS\mathring\calP_r\Lambda^{k}(T)
 \subseteq  
 \calS\calP_r\Lambda^{k}(T),
 \quad 
 \calS\mathring\calP_r^{-}\Lambda^{k}(T)
 \subseteq  
 \calS\calP_r^{-}\Lambda^{k}(T),
 \\ 
 \calB\mathring\calP_r\Lambda^{k}(T)
 \subseteq  
 \calB\calP_r\Lambda^{k}(T),
 \quad 
 \calB\mathring\calP_r^{-}\Lambda^{k}(T)
 \subseteq  
 \calB\calP_r^{-}\Lambda^{k}(T).
\end{gather*}
For any $\sigma \in \Sigma(k,n)$ and $\rho \in \Sigma_{0}(k,n)$ 
we let the bubble functions $\lambda^{T}_{\sigma} \in \calP_{k}(T)$ and $\lambda^{T}_{\rho} \in \calP_{k+1}(T)$ be defined by
\begin{align*}
  \lambda^{T}_{\sigma} := \lambda^{T}_{\sigma(1)} \lambda^{T}_{\sigma(2)} \cdots \lambda^{T}_{\sigma(k)},
  \quad 
  \lambda^{T}_{\rho}   := \lambda^{T}_{\rho(0)} \lambda^{T}_{\rho(1)} \cdots \lambda^{T}_{\rho(k)}.
\end{align*}
Note how this also defines the bubble functions $\lambda^{T}_{\sigma^{c}}$ and $\lambda^{T}_{\rho^{c}}$.
With those bubble functions,
we get yet another, explicit definition 
of the spanning sets \eqref{math:canonicalspanningset} and bases \eqref{math:canonicalbases} 
for the spaces with vanishing boundary traces:
\begin{subequations}
\begin{align}
 \calS\mathring\calP_r\Lambda^{k}(T)
 &=
 \left\{\;
  \lambda_{T}^{\beta} \lambda^{T}_{\sigma^{c}} \cartanlambda^{T}_{\sigma}
  \suchthat* 
   \beta \in A(r-n+k-1,n), \; \sigma \in \Sigma(k,n) 
 \;\right\},
 \\
 \calB\mathring\calP_r\Lambda^{k}(T)
 &=
 \left\{\;
  \lambda_{T}^{\beta} \lambda^{T}_{\sigma^{c}} \cartanlambda^{T}_{\sigma}
  \suchthat* 
  \begin{array}{l}
   \beta \in A(r-n+k-1,n), \; \sigma \in \Sigma(k,n)
   \\{}
   \lfloor\beta\rfloor \geq \lfloor\sigma^{c}\rfloor, \end{array}
 \;\right\},
 \\
 \calS\mathring\calP^{-}_r\Lambda^{k}(T)
 &=
 \left\{\;
  \lambda_{T}^{\beta} \lambda^{T}_{\rho^{c}} \whitney_{\rho}^{T}
  \suchthat* 
    \beta \in A(r-n+k-1,n), \; \rho \in \Sigma_{0}(k,n)
 \;\right\},
 \\
 \calB\mathring\calP^{-}_r\Lambda^{k}(T)
 &=
 \left\{\;
  \lambda_{T}^{\beta} \lambda^{T}_{\rho^{c}} \whitney_{\rho}^{T}
  \suchthat* 
  \begin{array}{l}
    \beta \in A(r-n+k-1,n), \; \rho \in \Sigma_{0}(k,n)
    \\{}
    \lfloor\rho\rfloor = 0, \end{array}
 \;\right\}. 
\end{align}
\end{subequations}
In the remainder of this document, we do not explicitly mention the field $\bbF$ when there is no danger of ambiguity.

\begin{remark}
The above bases and spanning sets for $\calP^{-}_r\Lambda^{k}(T)$ and $\mathring\calP^{-}_r\Lambda^{k}(T)$ are introduced in \cite{AFW1} and \cite{AFWgeodecomp}. The above bases and spanning sets for $\calP_r\Lambda^{k}(T)$ and $\mathring\calP_r\Lambda^{k}(T)$ are discussed in \cite{AFW1}, whereas \cite{AFWgeodecomp} introduces different bases. This subsection summarizes \cite[Section~4]{licht2022basis}, which contributes alternative proofs. 
\end{remark}

\section{Elements of Representation Theory} \label{sec:representationtheory}

In this section we gather elements of the representation theory of finite groups. 
We keep this rather concise and refer to the literature 
\cite{scott2012group,curtis1966representation,serre1977linear,jantzen2007representations,fulton2013representation}
for thorough expositions on representation theory. 
We are particularly interested in the notions of \emph{irreducible representations}, \emph{induced representations}, and \emph{monomial representations}. 
While the first two concepts are standard material in expositions on representation theory, 
the notion of monomial representation seems to have attracted much less attention yet. 

Throughout this section we fix a finite group $G$. 
The binary operation of the group is written multiplicatively. 
We let $e \in G$ denote the identity element of $G$
and we let $g^{\inv} \in G$ be the inverse element of any $g \in G$. 
Furthermore, 
we fix $\bbF \in \{ \bbR, \bbC \}$ in this section to be 
either the field of real numbers or the field of complex numbers. 
For any vector space $V$ over $\bbF$ we write $\GL(V)$ for its general linear group. 

A \emph{representation of $G$} is a group homomorphism $\frakr : G \rightarrow \GL(V)$
from $G$ into the general linear group of a vector space $V$. 
Necessarily, $\frakr(e) = \Id_V$ and for all $g, h \in G$ we have
$\frakr( g h ) = \frakr(g) \frakr(h)$ and $\frakr(g)^{\inv} = \frakr(g^{\inv})$. 
The \emph{dimension} of $\frakr$ is defined as the dimension of $V$,
and the representation $\frakr$ is called \emph{finite-dimensional} if $V$ is finite-dimensional. A representation is called \emph{faithful}
if it is a group monomorphism, that is,
only the unit of the group is mapped to the identity.

We call two representations $\frakr : G \rightarrow \GL(V)$ and $\fraks : G \rightarrow \GL(V)$ \emph{equivalent}
if there exists an isomorphism $J : V \rightarrow V$
such that $\fraks(g) = J^{-1} \frakr(g) J$ for all $g \in G$. 
In many circumstances, we are only interested in features of representations 
up to equivalence.

\begin{example}
 The most important example of a group in this article 
 is the group $\Perm(a:b)$ of permutations of the set $[a:b]$ for some $a, b \in \bbZ$. 
 The binary operation of the group is the composition. 
 We also recall the cycle notation: when $x_1, x_2, \dots, x_m \in [a:b]$ are pairwise distinct,
 then $\pi := (x_1 x_2 \dots x_m) \in \Perm(a:b)$ is the unique permutation that satisfies 
 \begin{align*}
  \pi(x_1) = x_2, \quad \pi(x_2) = x_3, \quad \dots \quad \pi(x_m) = x_1
 \end{align*}
 and leaves all other members of $[a:b]$ invariant. 
\end{example}

\begin{example} \label{example:basicexamplesofrepresentations}
 Let $G$ be any group and let $V$ be any vector space over the field $\bbF$.
 Then the mapping $\frakr : G \rightarrow \GL(V)$ that assumes the constant value $\Id_{V}$ 
 is a representation of $G$.
 This basic but important example is the so-called \emph{trivial representation of $G$}. 
For another basic example, 
 recall that every group $G$ generates the vector space $V = \bbF^{G}$ over $\bbF$.
 The mapping $\frakr : G \rightarrow \GL(V)$ such that $\frakr(g)h = gh$ for all $g, h \in G$
 is a representation of $G$. 
\end{example}

\subsection{Direct Sums, Subrepresentations, and Irreducible Representations}

We want to compose new representations from old representations. 
One way of doing so is the direct sum.
Let $\frakr : G \rightarrow \GL(V)$ and $\fraks : G \rightarrow \GL(W)$
be two representations of $G$. Their \emph{direct sum} 
\begin{align*}
 \frakr \oplus \fraks : G \rightarrow \GL( V \oplus W )
\end{align*}
is another representation of $G$ and is defined by 
\begin{align*}
 \left( (\frakr \oplus \fraks) g \right)(v,w) = ( \frakr(g) v, \fraks(g) w ),
 \quad 
 g \in G, \quad (v,w) \in V \oplus W.
\end{align*}
The definition of the direct sum extends to the case of more than two direct summands 
in the obvious manner. 
We are interested in how to conversely decompose a representation 
into direct summands. To study that topic, 
we introduce further terminology. 
\\

Let $\frakr : G \rightarrow \GL(V)$ be a representation. 
A subspace $W \subseteq V$ is called \emph{$\frakr$-invariant} if $\frakr(g) W = W$ for all $g \in G$.
Examples of $\frakr$-invariant subspaces are $V$ itself and the zero vector space. 
We call the representation $\frakr$ \emph{irreducible}
if the only $\frakr$-invariant subspaces of $V$ are the zero vector space and $V$ itself;
otherwise we call $\frakr$ \emph{reducible}. 

Suppose that $W \subseteq V$ is an $\frakr$-invariant subspace.
Then there exists a representation $\frakr^{W} : G \rightarrow \GL(W)$ in the obvious way.
We call $\frakr^{W}$ a \emph{subrepresentation} of $\frakr$.
The following result is known as Maschke's theorem \cite{maschke1899beweis}.

\begin{lemma}
 Let $\frakr : G \rightarrow \GL(V)$ be a finite-dimensional representation of $G$.
 Then there exist $\frakr$-invariant subspaces $V_{1}, \dots, V_{m} \subseteq V$
 such that 
 \begin{align*}
  V = V_{1} \oplus V_{2} \oplus \dots \oplus V_{m},
  \quad 
  \frakr = \frakr^{V_{1}} \oplus \frakr^{V_{2}} \oplus \dots \oplus \frakr^{V_{m}},
 \end{align*}
 and such that each $\frakr^{V_{i}}$ is irreducible. 
\end{lemma}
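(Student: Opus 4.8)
The statement is Maschke's theorem, and the plan is to prove it by induction on $\dim V$, with the inductive step resting on the classical averaging trick that produces a $G$-invariant complement to any invariant subspace. If $r$ is irreducible (in particular if $\dim V \leq 1$) we are done with $m = 1$. Otherwise there is a proper nonzero $r$-invariant subspace $A \subseteq V$, and the crux of the argument is to construct an $r$-invariant subspace $B$ with $V = A \oplus B$; then $r = r^{A} \oplus r^{B}$, both summands have strictly smaller dimension, and applying the inductive hypothesis to $r^{A}$ and to $r^{B}$ and concatenating the resulting decompositions finishes the proof.

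For the construction of $B$, I would first pick \emph{any} linear projection $P_{0} : V \rightarrow V$ with image $A$ and $P_{0}|_{A} = \Id_{A}$ (this exists by choosing a vector space complement, with no reference to the $G$-action). Then I would average over the group and set
\begin{align*}
 P := \frac{1}{|G|} \sum_{g \in G} r(g) \, P_{0} \, r(g)^{\inv}.
\end{align*}
Here we use crucially that $\bbK \in \{\bbR, \bbC\}$ has characteristic zero, so $|G|$ is invertible. One checks directly that $P$ is linear, that its image still lies in $A$ (since each $A$ is $r$-invariant and $\Image P_{0} = A$), that $P|_{A} = \Id_{A}$ (for $a \in A$ one has $r(g)^{\inv} a \in A$, hence $P_{0} r(g)^{\inv} a = r(g)^{\inv} a$, hence $r(g) P_{0} r(g)^{\inv} a = a$), so $P$ is a projection onto $A$, and finally that $P$ is $G$-equivariant, i.e. $r(h) P = P r(h)$ for all $h \in G$, by reindexing the averaging sum. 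Setting $B := \ker P$ then gives $V = \Image P \oplus \ker P = A \oplus B$, and the equivariance $r(h) P = P r(h)$ forces $r(h) B = B$, so $B$ is $r$-invariant as required.

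The only genuinely delicate point is verifying the $G$-equivariance of $P$: writing $r(h) P r(h)^{\inv} = \frac{1}{|G|}\sum_{g} r(hg) P_{0} r(hg)^{\inv}$ and substituting $g' = hg$ shows this equals $P$, whence $r(h) P = P r(h)$. Everything else is routine linear algebra. I would also remark that no appeal to the unitarity observation made earlier in the text is needed; the averaging trick is self-contained, though one could alternatively phrase the proof via a $G$-invariant inner product obtained by the same averaging. Finally, a small bookkeeping point in the induction: when $A$ or $B$ is zero-dimensional it contributes no summands, and when the whole of $V$ is irreducible the list is just $V_{1} = V$; organizing the recursion so that these degenerate cases are absorbed cleanly is worth a sentence but presents no real obstacle.
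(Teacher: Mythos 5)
Your proposal is correct and is essentially the paper's own proof: the same induction on $\dim V$, with the inductive step supplied by averaging an arbitrary projection onto the invariant subspace over the group to obtain an equivariant projection whose kernel is an invariant complement (the paper writes the average as $|G|^{-1}\sum_{z} r(z)^{-1} P\, r(z)$, which matches your sum after the substitution $z = g^{-1}$). The additional verifications you spell out (characteristic zero, $P|_{A} = \Id_{A}$, equivariance by reindexing) are exactly the ones the paper leaves to the reader.
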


\begin{proof}
 If $\frakr$ is irreducible, then there is nothing to show.
 Otherwise, there exists an $\frakr$-invariant subspace $W \subset V$
 that is neither $V$ itself nor the trivial subspace. 
 We let $P : V \rightarrow W \subseteq V$ be any projection of $V$ onto $W$. Since $G$ is finite, we can define the linear mapping 
 \begin{align*}
  S : V \rightarrow V, \quad v \mapsto \lvert G \rvert^{-1} \sum_{ h \in G } \frakr(h)^{-1} P( \frakr(h) v ).
 \end{align*}
 One verifies that $S$ is again a projection onto $W$. 
 Furthermore, we see that $S( \frakr(g) v ) = \frakr(g) S( v )$ for all $g \in G$ and $v \in V$. 
 So $\ker(S)$ is $\frakr$-invariant.
 Since $V = W \oplus \ker(S)$ by linear algebra,
 we decompose $V$ into the direct sum of two non-trivial $\frakr$-invariant subspaces. 
 One then sees that $\frakr$ is the direct sum of the representations of $G$ over these subspaces. 
 Since $V$ is finite-dimensional, an induction argument over the dimension of $V$ shows the claim. 
\end{proof}

\subsection{Restrictions and Induced Representations}

Let $H \subset G$ be a subgroup of $G$. 
We recall that the cardinality of $H$ divides the cardinality of $G$,
and that the quotient $\lvert G \rvert/ \lvert H \rvert$ is called the \emph{index} of $H$ in $G$. 
Then we have a representation $\frakr_{H} : H \rightarrow V$
that is called the \emph{restriction} of $\frakr$ to the subgroup $H$. 
Generally, we cannot recover the original representation from its restriction to a subgroup. 
However, there exists a canonical way of inducing a representation of a group 
from any given representation of one of its subgroups.

Suppose that we have a representation $\fraks : H \rightarrow \GL(W)$
of the subgroup $H$ over the vector space $W$. 
First, we let $g_{1}, g_{2}, \dots, g_{M}$ be any list of representatives of the left cosets of $H$ in $G$, that is,
\begin{align*}
    \{ g_{1} H, g_{2} H, \dots, g_{M} H \} = \{ g H \suchthat g \in G \} 
    ,
\end{align*}
where necessarily $M = \lvert G \rvert/ \lvert H \rvert$ is in the \emph{index} of $H$ in $G$. 
We recall that 
for every $g \in G$ there exists a unique permutation $\tau_{g} \in \Perm(1:M)$
such that $g g_{i} \in g_{\tau(i)} H$. 
More specifically, there exists a unique $h_{g,i} \in H$
such that $g g_{i} = g_{\tau(i)} h_{g,i}$. 
We now define the vector space 
\begin{align*}
 V = \bigoplus_{i=1}^{M} W
\end{align*}
and define a representation $\frakr : G \rightarrow \GL(V)$ by setting componentwise 
\begin{align*}
 \frakr(g)(w_1,\dots,w_{M})_{\tau(i)}
 :=
 \fraks( h_{g,i} ) w_{i},
 \quad 
 1 \leq i \leq M, \quad w_{1},\dots,w_{M} \in W.
\end{align*}
In other words,
\begin{align*}
 \frakr(g)(w_1,\dots,w_{M})
 =
 \left( 
  \fraks( h_{g,\tau^{-1}(1)} ) w_{\tau^{-1}(1)},
  \dots,
  \fraks( h_{g,\tau^{-1}(M)} ) w_{\tau^{-1}(M)}
 \right).
\end{align*}
We call this the \emph{induced representation}. 
Conceptually, $V$ consists of $M$ copies of $W$, 
each associated to a coset representative $g_{i}$,
and the induced representation first applies the initial representation of $H$ componentwise
and then permutes the components.

We remark that the induced representation as defined above 
depends on the choice of representatives $g_{1}, g_{2}, \dots, g_{M}$ of the left cosets. Different sets of representatives lead to different induced representations, 
however, all those different representations are equivalent. 
Hence, technically, the literature defines induced representations only up to equivalence.
We refer to \cite[Chapter~12.5]{scott2012group} for further background and details.

\subsection{Monomial Representations and Invariant Sets}

A square matrix is called \emph{monomial}, or \emph{generalized permutation matrix}, if it is the product of a permutation matrix and an invertible diagonal matrix. 
Hence monomial matrices are the invertible matrices that have the non-zero pattern of a permutation matrix. 
A group representation $\frakr : G \rightarrow \GL(V)$ is called \emph{monomial}
if there exists a basis of $V$ with respect to which $\frakr(g)$ is a monomial matrix for each $g \in G$.

A representation of $G$ is called \emph{induced monomial}
if it is induced by a one-dimensional representation of a subgroup $H$ of $G$.
It is easy to see that every induced monomial representation is monomial. 
We remark that many authors use the term \emph{monomial} for what we call \emph{induced monomial}. 
For irreducible representations, 
being monomial and being induced monomial are equivalent \cite[Corollary 50.6]{curtis1966representation}.

\begin{lemma}
 If the representation $\frakr$ is irreducible and induced monomial,
 then $\frakr$ is monomial. 
\end{lemma}

We now introduce the notion of invariance that is central to the following studies.
To the author's best knowledge, 
the following is not standard terminology in the literature of representation theory.
Suppose that $\calQ \subseteq V$ is a set of $M$ pairwise different vectors of $V$, 
\begin{align*}
 \calQ = \left\{ \omega_{1}, \dots, \omega_{M} \right\}.
\end{align*}
We say that $\calQ$ is \emph{$\bbF$-invariant under $\frakr$} if 
for every $g \in G$
there exists a permutation $\tau \in \Perm(1:M)$
and a sequence of complex units $\chi_{1},\dots,\chi_{M} \in \bbF$
such that 
\begin{align*}
 \frakr(g) \omega_{i} = \chi_{i} \omega_{\tau(i)},
 \quad 
 1 \leq i \leq M.
\end{align*}
We notice that any $\bbR$-invariant subset of a real vector space 
gives rise to an $\bbR$-invariant subset of the complexification of that vector space.

\section{Notions of Invariance} \label{sec:invariance}

In this section we connect the preceding elements of representation theory
with finite element exterior calculus. 
We identify the pullback of barycentric differential forms along the affine automorphisms of a simplex 
as a representation of the symmetric group. 
Here and in all subsequent sections,
we let $\bbF \in \left\{ \bbR, \bbC \right\}$ be arbitrary
unless mentioned otherwise.
\\

We are particularly interested in the affine automorphisms of a simplex. 
Suppose that $T$ is an $n$-simplex with vertices $v_{0}, \dots, v_{n}$, respectively. 
For any permutation $\pi \in \Perm(n)$ there exists 
a unique affine diffeomorphism ${S}_{\pi} : T \rightarrow T$ such that 
\begin{align*}
 {S}_{\pi}( v_{i} ) = v_{ \pi^{-1}(i) }. 
\end{align*}
We let $\Sym(T)$ denote the \emph{symmetry group} of $T$,
which is the group of all affine automorphisms of $T$ and whose members we call \emph{simplicial symmetries}.
We say\footnote{When $\Sym(T)$ carries the composition as binary group operation, 
then the association $\pi \mapsto S_{\pi}$ is not a group homomorphism
but a so-called \emph{antihomomorphism}.} 
that the permutation $\pi$ \emph{induces} the simplicial symmetry $S_{\pi}$. 

Since the simplicial symmetries are also diffeomorphisms, 
we can pullback differential forms along them.
In the terminology of representation theory, we have representations 
\begin{align} \label{math:finiteelementrepresentation}
 \frakr : \Perm(n) \rightarrow \GL\left( \bbF\Lambda^{k}(T) \right), \quad \pi \mapsto S_{\pi}^{\ast},
\end{align}
that map permutations to the pullbacks along the corresponding simplicial symmetries.
We briefly verify that this is indeed a representation of the group $\Perm(n)$. 
For $\pi, \mu \in \Perm(n)$ we see
\begin{align*}
 S_{\pi \circ \mu}( v_{i} )
 =
 v_{(\pi \circ \mu)^{-1}(i)}
 =
 v_{\mu^{-1}\pi^{-1}(i)}
 =
 S_{\mu}( v_{\pi^{-1}(i)} )
 =
 S_{\mu}( S_{\pi}( v_{i} ) )
 . 
\end{align*}
Hence, $S_{\pi \circ \mu} = S_{\mu} S_{\pi}$. Consequently,
\begin{align*}
 S_{\pi \circ \mu}^{\ast} 
 = 
 \left( S_{\mu} S_{\pi} \right)^{\ast}
 =
 S_{\pi}^{\ast} S_{\mu}^{\ast}
 . 
\end{align*}
So the mapping \eqref{math:finiteelementrepresentation} does indeed define a group homomorphism
and thus is a representation of $\Perm(n)$. 
Of course, this representation is not finite-dimensional. 
\\

We are interested in the subrepresentation of the permutation group 
over spaces of polynomial differential forms.
We prepare this with several observations 
regarding the pullback operation on barycentric differential forms along ${S}_{\pi}$. 
For any $m, n \in \bbZ$, we write $\delta_{m,n}$ for the Kronecker delta.
For all $i,j \in [0:n]$ we observe that the pullback of the barycentric coordinates satisfies 
\begin{align*}
 \left( {S}_{\pi}^{\ast} \lambda_{i}^{T} \right) ( v_j )
 &= 
 \lambda^{T}_{i} \left( S_{\pi}( v_j ) \right)
 = 
 \lambda^{T}_{i} \left( v_{\pi^{-1}(j)} \right)
 = 
 \kronecker_{i,\pi^{-1}(j)}
 = 
 \kronecker_{\pi(i),j}
\end{align*}
Since the pullback along affine mappings preserves affine functions,
\begin{align} \label{math:transform:barycentriccoordinates}
 {S}_{\pi}^{\ast} \lambda^{T}_{i} = \lambda^{T}_{\pi(i)},
 \quad 
 {S}_{\pi}^{\ast} \cartanlambda^{T}_{i} = \cartan {S}_{\pi}^{\ast} \lambda^{T}_{i} = \cartanlambda^{T}_{\pi(i)}.
\end{align}
It follows that for any multiindex $\alpha \in A(n)$ we have 
\begin{align} \label{math:transform:monomials}
 {S}_{\pi}^{\ast} \lambda_{T}^{\alpha}
 = 
 {S}_{\pi}^{\ast} \prod_{i=0}^{n} \left( \lambda_{i}^{T} \right)^{\alpha(i)}
 =
 \prod_{i=0}^{n} \left( \lambda_{\pi(i)}^{T} \right)^{\alpha(i)}
 =
 \prod_{i=0}^{n} \left( \lambda_{i}^{T} \right)^{\alpha(\pi^{\inv}(i))}
 =
 \lambda_{T}^{\alpha \pi^{\inv}}
 .
\end{align}
For describing the pullback of barycentric differential forms along symmetry transformations, 
it suffices to consider basic alternators and Whitney forms. 
That is the content of the following two auxiliary lemmas. 

\begin{lemma} \label{lemma:transform:alternators}
    Let $k \in [1:n]$, $\sigma \in \Sigma(k,n)$ and $\pi \in \Perm(n)$. 
    Then 
    \begin{align} \label{math:transform:alternators}
        {S}_{\pi}^{\ast} \cartanlambda_{\sigma}^{T}
        =
        \eps(\pi\sigma)
        \cartanlambda_{\widehat\sigma}^{T}
        ,
    \end{align}
    where $\widehat\sigma \in \Sigma(k,n)$ such that $[\widehat\sigma] = [\pi\sigma]$.
\end{lemma}

\begin{proof}
    We observe that 
    \begin{align*}
        {S}_{\pi}^{\ast} \cartanlambda_{\sigma}^{T}
        &=
        {S}_{\pi}^{\ast}\cartanlambda_{\sigma(1)}^{T} \wedge \dots \wedge {S}_{\pi}^{\ast}\cartanlambda_{\sigma(k)}^{T}
        \\&=
        \cartanlambda_{\pi\sigma(1)}^{T} \wedge \dots \wedge \cartanlambda_{\pi\sigma(k)}^{T}
        =
        \eps(\pi\sigma)
        \cartanlambda_{\widehat\sigma(1)}^{T} \wedge \dots \wedge \cartanlambda_{\widehat\sigma(k)}^{T}
        =
        \eps(\pi\sigma)
        \cartanlambda_{\widehat\sigma}^{T}
        .
    \end{align*}
    Here, we have used that $\eps(\pi\sigma)$ is the sign of the permutation that brings the sequence 
    $\pi\sigma(1)$,$\pi\sigma(2)$,$\dots$,$\pi\sigma(k)$ into ascending order.
\end{proof}

\begin{lemma} \label{lemma:transform:whitneyforms}
    Let $k \in [0:n]$, $\rho \in \Sigma_{0}(k,n)$ and $\pi \in \Perm(n)$. 
    Then 
    \begin{align} \label{math:transform:whitneyforms}
        {S}_{\pi}^{\ast} \whitney_{\rho}^{T}
        =
        \eps(\pi\rho)
        \whitney_{\widehat\rho}^{T}
        ,
    \end{align}
    where $\widehat\rho \in \Sigma_{0}(k,n)$ such that $[\widehat\rho] = [\pi\rho]$.
\end{lemma}

\begin{proof}
    When $p \in [\rho]$, then $[\pi(\rho-p)] = [ \widehat\rho - \pi(p)]$.
    Using the definition of Whitney forms, the preceding lemma, 
    and a combinatorial identity to be proven shortly, 
    \begin{align*}
        {S}_{\pi}^{\ast} \whitney_{\rho}^{T}
        &=
        \sum_{p \in [\rho]} \eps(p,\rho-p) 
        \left( {S}_{\pi}^{\ast} \lambda^{T}_{p} \right) 
        \left( {S}_{\pi}^{\ast} \cartanlambda^{T}_{\rho-p} \right)
        \\&=
        \sum_{p \in [\rho]}
        \eps(p,\rho-p) 
        \eps(\pi(\rho-p))
        \lambda^{T}_{\pi(p)} 
        \cartanlambda^{T}_{ \widehat\rho - \pi(p) }
        \\&=
        \sum_{p \in [\rho]}
        \eps(\pi\rho)
        \eps(\pi(p),\widehat\rho-\pi(p)) 
        \lambda^{T}_{\pi(p)} 
        \cartanlambda^{T}_{ \widehat\rho - \pi(p) }
        =
        \eps(\pi\rho)
        \whitney_{\widehat\rho}^{T}
        .
    \end{align*}
We have used 
    $\eps(\pi\rho) \eps(\pi(p),\widehat\rho-\pi(p)) = \eps(p,\rho-p) \eps(\pi(\rho-p))$,
    which is shown as follows.
    Fix $p \in [\rho]$.
    Starting with the sequence $\rho(0),\rho(1),\dots,\rho(k)$,
	a permutation of sign $\eps(p,\rho-p)$ moves $p$ to the front of the sequence,  
    and after applying $\pi$ to each sequence entry,
    a permutation of sign $\eps(\pi(\rho-p))$ sorts the last $k$ entries in ascending order. 
That resulting sequence can also be constructed in a different way.
    Namely, we apply $\pi$ to each entry of the initial sequence 
    and let a permutation of sign $\eps(\pi\rho)$ sort the sequence $\pi\rho(0),\pi\rho(1),\dots,\pi\rho(k)$ in ascending order;
    then a permutation of sign $\eps(\pi(p),\widehat\rho-\pi(p))$ moves the entry $\pi(p)$ to the front position. 
\end{proof}

These observations suffice to completely describe 
the transformation of barycentric polynomial differential forms
along affine diffeomorphisms. 
Evidently, the finite element spaces studied in this article 
are invariant under the representation of the permutation group.
We have subrepresentations
\begin{gather*}
    \frakr : \Perm(n) \rightarrow \GL\left( \bbF\calP_r\Lambda^{k}(T) \right),
    \quad
    \frakr : \Perm(n) \rightarrow \GL\left( \bbF\calP_r^{-}\Lambda^{k}(T) \right),
    \\
    \frakr : \Perm(n) \rightarrow \GL\left( \bbF\mathring\calP_r\Lambda^{k}(T) \right),
    \quad
    \frakr : \Perm(n) \rightarrow \GL\left( \bbF\mathring\calP_r^{-}\Lambda^{k}(T) \right).
\end{gather*}
Now we apply the notion of $\bbF$-invariant set introduced in the preceding section. 
We say that a set $\calQ \subseteq \bbF\calP_{r}\Lambda^{k}(T)$ is $\bbF$-invariant 
if it is $\bbF$-invariant under the representation $\frakr$. 
To get a feel for this notion of invariance we provide a few examples. 
None of the following observations are a technical challenge. 

\begin{lemma} \label{lemma:invarianceof:spanningsets}
 Let $k,r \in \bbZ$ and $r \geq 0$. 
 The canonical spanning sets 
 $\calS\calP_{r}\Lambda^{k}(T)$, $\calS\calP_{r}^{-}\Lambda^{k}(T)$, $\calS\mathring\calP_{r}\Lambda^{k}(T)$,
 and $\calS\mathring\calP_{r}^{-}\Lambda^{k}(T)$
are $\bbR$-invariant. 
\end{lemma}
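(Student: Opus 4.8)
The plan is to reduce everything to the transformation identities \eqref{math:transform:monomials}, \eqref{math:transform:alternators}, and \eqref{math:transform:whitneyforms} established earlier in this section, applied with $T' = T$. Fix $\pi \in \Perm(n)$ and the induced simplicial symmetry $S_{\pi} \in \Sym(T)$, so that the representation $r$ sends $S_\pi$ to the pullback $S_\pi^\ast$. I would first handle $\calS\calP_r\Lambda^k(T)$. For a generator $\lambda_T^\alpha\cartanlambda^T_\sigma$ with $\alpha\in A(r,n)$ and $\sigma\in\Sigma(k,n)$, combining \eqref{math:transform:monomials} with \eqref{math:transform:alternators} gives
\begin{align*}
 S_\pi^\ast\left( \lambda_T^\alpha\cartanlambda^T_\sigma \right)
 =
 \eps(\pi\sigma)\,\lambda_T^{\alpha\pi^{\inv}}\cartanlambda^T_{\sigma'},
\end{align*}
where $\sigma'\in\Sigma(k,n)$ is the unique strictly ascending tuple with $[\sigma'] = [\pi\sigma] = \pi([\sigma])$. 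Since $\alpha\pi^{\inv}\in A(r,n)$ and $\sigma'\in\Sigma(k,n)$, the right-hand side is a scalar multiple of another element of $\calS\calP_r\Lambda^k(T)$, and the scalar $\eps(\pi\sigma)\in\{-1,1\}$ is a unit in $\bbR$.

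It then remains to see that the pullback acts on the index set by a bijection. The map $\alpha\mapsto\alpha\pi^{\inv}$ is a bijection of $A(r,n)$ with inverse $\alpha\mapsto\alpha\pi$, and $\sigma\mapsto\sigma'$ is a bijection of $\Sigma(k,n)$, being the composite of $\sigma\mapsto\pi\circ\sigma$ with the reordering into ascending order, undone by the same recipe applied to $\pi^{\inv}$. Enumerating $\calS\calP_r\Lambda^k(T)=\{\omega_1,\dots,\omega_M\}$ along the index set $A(r,n)\times\Sigma(k,n)$, the displayed identity reads $S_\pi^\ast\omega_i=\chi_i\omega_{\tau(i)}$ with $\chi_i\in\{-1,1\}$ and $\tau\in\Perm(1:M)$ the permutation induced by $\pi$; since $\pi$ was arbitrary, this is precisely $\bbR$-invariance.

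The other three sets are handled the same way. For $\calS\calP_r^-\Lambda^k(T)$ one uses \eqref{math:transform:whitneyforms} instead of \eqref{math:transform:alternators}, obtaining $S_\pi^\ast(\lambda_T^\alpha\phi_\rho^T)=\eps(\pi\rho)\,\lambda_T^{\alpha\pi^{\inv}}\phi_{\rho'}^T$ with $\alpha\in A(r-1,n)$ and $\rho,\rho'\in\Sigma_{0}(k,n)$, $[\rho']=\pi([\rho])$. For the two constrained sets $\calS\mathring\calP_r\Lambda^k(T)$ and $\calS\mathring\calP_r^-\Lambda^k(T)$ one additionally checks that the defining condition is stable under the index map, using $[\alpha\pi^{\inv}]\cup[\sigma']=\pi([\alpha])\cup\pi([\sigma])=\pi\bigl([\alpha]\cup[\sigma]\bigr)=\pi([0:n])=[0:n]$ (and likewise with $\rho$ in place of $\sigma$); hence the index bijection restricts to a bijection of the constrained index set. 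There is no real obstacle in this proof; the only point demanding attention is the combinatorial bookkeeping ensuring that the index maps are genuine bijections, so that the sign factors $\eps(\pi\sigma)$ and $\eps(\pi\rho)$ can be absorbed into the $\chi_i$ and the accompanying permutation $\tau$ is well defined.
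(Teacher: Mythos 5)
Your proposal is correct and follows exactly the route of the paper, whose proof is a one-line appeal to the definitions of the spanning sets together with the transformation identities \eqref{math:transform:monomials}, \eqref{math:transform:alternators}, and \eqref{math:transform:whitneyforms}; you have merely spelled out the index bijections and the preservation of the condition $[\alpha]\cup[\sigma]=[0:n]$ that the paper leaves implicit.
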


\begin{proof}
 This follows from the definitions of these sets
 together with 
 \eqref{math:transform:monomials}, 
 \eqref{math:transform:alternators}, 
 and 
 \eqref{math:transform:whitneyforms}.
\end{proof}

\begin{lemma}
 Let $k \in \bbZ$. 
 The basis $\calB\calP_1^{-}\Lambda^{k}(T)$ of the lowest-degree Whitney $k$-form is $\bbR$-invariant. 
\end{lemma}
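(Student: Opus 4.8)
The plan is to compute directly how the pullback $S_\pi^\ast$ acts on the spanning set $\calB\calP_1^-\Lambda^k(T) = \{\lambda_T^\alpha \phi_\rho^T \mid \alpha \in A(0,n),\ \rho \in \Sigma_0(k,n),\ \lfloor\alpha\rfloor \geq \lfloor\rho\rfloor\}$. Since $r=1$, the multiindex $\alpha$ ranges over $A(0,n) = \{0\}$, so $[\alpha] = \emptyset$ and the condition $\lfloor\alpha\rfloor \geq \lfloor\rho\rfloor$ is vacuous (with $\lfloor\alpha\rfloor = \infty$ by the convention for empty multiindices, or trivially satisfied since there is nothing to constrain). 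Hence $\calB\calP_1^-\Lambda^k(T) = \{\phi_\rho^T \mid \rho \in \Sigma_0(k,n)\}$, the set of Whitney $k$-forms associated to the $k$-subsimplices of $T$. First I would record this simplification explicitly.

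Next I would invoke formula \eqref{math:transform:whitneyforms}: for $\rho \in \Sigma_0(k,n)$, there is a unique $\rho' \in \Sigma_0(k,n)$ with $[\rho'] = [\pi\rho]$ (namely, order the set $\pi([\rho])$ ascendingly), and $S_\pi^\ast \phi_\rho^{T} = \eps(\pi\rho)\, \phi_{\rho'}^{T}$. Thus $S_\pi$ permutes the Whitney forms up to a sign $\eps(\pi\rho) \in \{-1,+1\}$. To match the definition of $\bbR$-invariance, I would observe that the map $\rho \mapsto \rho'$ on $\Sigma_0(k,n)$ is a bijection (its inverse is the corresponding map for $\pi^{-1}$), so it defines the required permutation $\tau$ of the index set, and the scalars $\eps(\pi\rho)$ are nonzero elements of $\bbR$ — indeed units, consistent with the remark that finite groups act unitarily. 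This establishes that $\calB\calP_1^-\Lambda^k(T)$ is $\bbR$-invariant.

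The main obstacle is essentially bookkeeping rather than conceptual: one must be careful that the index set is exactly $\Sigma_0(k,n)$ with no further constraint surviving from $\lfloor\alpha\rfloor \geq \lfloor\rho\rfloor$ when $r=1$, and that the assignment $\rho \mapsto \rho'$ genuinely lands back in $\Sigma_0(k,n)$ and is a bijection. Both points are immediate from the definitions in Section~\ref{subsec:combinatorics} and from Lemma~\ref{lemma:basicpermutationlemma} (which underlies \eqref{math:transform:whitneyforms}). Since \eqref{math:transform:whitneyforms} is already proved in the excerpt, the argument reduces to these two short verifications plus citing that formula, so the proof will be only a few lines.
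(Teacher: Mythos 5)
Your proposal is correct and matches the paper's own argument: the paper proves this lemma simply by citing the results of Section~\ref{sec:invariance}, namely the transformation rule \eqref{math:transform:whitneyforms}, which is exactly the formula you invoke. Your additional bookkeeping (that for $r=1$ the basis reduces to $\{\phi^{T}_{\rho} \mid \rho \in \Sigma_{0}(k,n)\}$ and that $\rho \mapsto \rho'$ is a bijection with signs $\eps(\pi\rho) \in \{-1,+1\}$) just spells out why that citation suffices.
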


\begin{proof}
 This follows from Lemma~\ref{lemma:invarianceof:spanningsets} 
 since $\calB\calP_1^{-}\Lambda^{k}(T) = \calS\calP_1^{-}\Lambda^{k}(T)$. 
\end{proof}

\begin{lemma} \label{lemma:invarianceof:simpleexamples} Let $r \in \bbN$. We have $\bbR$-invariant bases 
 \begin{gather*}
\calB\calP_{r}\Lambda^{0}(T),                 \quad
  \calB\calP_{r}^{-}\Lambda^{0}(T),             \quad 
  \calB\calP_{r}\Lambda^{n}(T),                 \quad
  \calB\calP_{r}^{-}\Lambda^{n}(T),             \\
  \calB\mathring\calP_{r}\Lambda^{0}(T),        \quad 
  \calB\mathring\calP_{r}^{-}\Lambda^{0}(T),    \quad 
  \calB\mathring\calP_{r}\Lambda^{n}(T),      \quad 
  \calB\mathring\calP_{r}^{-}\Lambda^{n}(T).    \end{gather*}
\end{lemma}

\begin{proof} 
Let $r \geq 1$. 
 In regard to $0$-forms, definitions imply the identities 
 \begin{align*}
  \calB\calP_{r}\Lambda^{0}(T)
  = \calS\calP_{r}\Lambda^{0}(T) 
  = \calS\calP^{-}_{r}\Lambda^{0}(T)
  = \calB\calP^{-}_{r}\Lambda^{0}(T),
  \\
  \calB\mathring\calP_{r}\Lambda^{0}(T) 
  = \calS\mathring\calP_{r}\Lambda^{0}(T)
  = \calS\mathring\calP^{-}_{r}\Lambda^{0}(T) 
  = \calB\mathring\calP^{-}_{r}\Lambda^{0}(T)
  .
 \end{align*}
 In regard to $n$-forms, one can show that 
 \begin{gather*}
  \calB\mathring\calP_{r}\Lambda^{n}(T)
  =
  \calB         \calP_{r}\Lambda^{n}(T)
  =
  \calS         \calP_{r}\Lambda^{n}(T), 
  \\ 
  \calB\mathring\calP^{-}_{r}\Lambda^{n}(T)
  =
  \calB         \calP^{-}_{r}\Lambda^{n}(T)
  =
  \calS         \calP^{-}_{r}\Lambda^{n}(T) 
  .
 \end{gather*}
To see the latter two equations, we note that $\Sigma_0(n,n)$ has only a single member $\rho$, which satisfies $[\rho]=[0:n]$. 
To see the former two equations,
 we recall that if $\alpha \in A(r,n)$ and $\sigma \in \Sigma(n,n)$
 with $\lfloor\alpha\rfloor \in [\sigma]$,
 then there exist unique $s \in \{1,-1\}$ and $q \in [0:n] \setminus [\sigma]$ 
 such that $\cartanlambda_{\sigma - \lfloor\alpha\rfloor + q} = s \cartanlambda_\sigma$. 
 Moreover, $[\sigma - \lfloor\alpha\rfloor + q] \cup [\alpha] = [0:n]$. 
 Thus,
 $\lambda_{T}^{\alpha} \cartanlambda^{T}_{\sigma} \in \calB\mathring\calP_{r}\Lambda^{n}(T)$. 
The desired $\bbR$-invariance of those sets
 follows from these identities together with Lemma~\ref{lemma:invarianceof:spanningsets}.
\end{proof}

While all the canonical spanning sets are $\bbR$-invariant,
we have identified only a few $\bbR$-invariant bases of finite element spaces.
The remainder of the exposition will address the following question:
under which circumstances do finite element spaces of differential forms 
have invariant bases in the sense of this subsection?

\section{Invariant Bases of Lowest Polynomial Degree} \label{sec:lowestordercase}

We commence our study of invariant bases with the case of the constant differential $k$-forms over an $n$-simplex. 
Already the lowest-degree case exhibits non-trivial features. 
It serves as the base case for recursively constructing invariant bases in the last section. 
We utilize some advanced results in the representation theory of the symmetric group.

\begin{lemma} \label{lemma:repisfaithfulirreducible}
 Let $T$ be an $n$-simplex and $k \in \bbN_{0}$. 
 The representation of $\Perm(n)$ on $\bbF\calP_{0}\Lambda^{k}(T)$ is irreducible. 
 It is faithful for $0 < k < n$. 
\end{lemma}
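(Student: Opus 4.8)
The plan is to identify the representation $\bbK\calP_{0}\Lambda^{k}(T)$ explicitly and then recognize it as a well-known representation of $\Perm(n)$ whose properties are classical. First I would observe that $\calP_{0}\Lambda^{k}(T)$ consists of the constant differential $k$-forms, which is $\binom{n+1}{k}$-dimensional, and that by \eqref{math:partitionofzero} the one-forms $\cartanlambda_0^T,\dots,\cartanlambda_n^T$ span an $n$-dimensional space $U$ with the single relation $\sum_i \cartanlambda_i^T = 0$. By \eqref{math:transform:barycentriccoordinates} the action of $S_\pi$ permutes the $\cartanlambda_i^T$ exactly as $\pi$ permutes the indices, so $U$ is precisely the \emph{standard representation} of $\Perm(n)$ (the quotient of the permutation module $\bbK^{n+1}$ by the all-ones line, equivalently the sum-zero hyperplane). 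Then $\bbK\calP_{0}\Lambda^{k}(T) \simeq \Lambda^{k} U$ as representations, since constant $k$-forms are spanned by the $\cartanlambda_\sigma^T = \cartanlambda_{\sigma(1)}^T \wedge \dots \wedge \cartanlambda_{\sigma(k)}^T$ and the action \eqref{math:transform:alternators} is the induced action on the $k$-th exterior power.

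Next I would invoke the classical fact that the exterior powers $\Lambda^{k} U$ of the standard representation $U$ of $\Perm(n)$ (equivalently, of $S_{n+1}$) are irreducible for $0 \le k \le n$ — these are exactly the irreducible modules indexed by the hook partitions $(n+1-k,1^k)$ of $n+1$. This is standard (it can be cited, e.g., from Fulton--Harris or proved directly via characters: $\langle \chi_{\Lambda^k U}, \chi_{\Lambda^k U}\rangle = 1$), so irreducibility for all $k \in \bbN_0$ with $0 \le k \le n$ follows immediately; for $k < 0$ or $k > n$ the space is zero and the statement is vacuous, and for $k = 0$ and $k = n$ it is the trivial representation, which is trivially irreducible.

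For faithfulness when $0 < k < n$, I would argue that the kernel of $r$ is a normal subgroup of $\Perm(n) \cong S_{n+1}$. For $n+1 \ge 5$ the only normal subgroups are $\{e\}$, $A_{n+1}$, and $S_{n+1}$; a nontrivial kernel would force the representation to factor through $S_{n+1}/A_{n+1} \cong \bbZ/2$ (or be trivial), hence be one-dimensional, contradicting $\dim = \binom{n+1}{k} \ge 2$ for $0 < k < n$. The small cases $n+1 \in \{3,4\}$ (i.e.\ $n \in \{2,3\}$, where $S_3$ also has $A_3$ and $S_4$ also has the Klein four-group $V$ as normal subgroups) I would check by hand: one exhibits a transposition (or a double transposition, for the $V$ case in $S_4$) acting nontrivially on some basis $k$-form, using \eqref{math:transform:alternators} and the sign $\eps(\pi\sigma)$. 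The main obstacle is making the normal-subgroup argument fully rigorous in these low-dimensional cases without simply quoting the character table; I expect the cleanest route is the direct computation showing that a single well-chosen transposition sends some $\cartanlambda_\sigma^T$ to a different basis element (so cannot be in the kernel), which handles all $n$ uniformly and sidesteps the case analysis entirely.
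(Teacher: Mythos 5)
Your route is essentially the paper's: identify $\bbK\calP_{0}\Lambda^{k}(T)$ with the $k$-th exterior power of the standard representation of $\Perm(n)$ and quote the classical irreducibility of the hook representations (the paper does exactly this, citing \cite[Proposition~3.12]{fulton2013representation}, and treats faithfulness as immediate), so for irreducibility your proposal matches and is, if anything, more detailed. One cosmetic slip: the space of constant $k$-forms has dimension $\binom{n}{k}$, not $\binom{n+1}{k}$, since the $\cartanlambda_{i}^{T}$ span the $n$-dimensional cotangent space subject to the single relation \eqref{math:partitionofzero}; this is harmless because $\binom{n}{k}\geq 2$ still holds for $0<k<n$, which is all your argument uses.

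The substantive issue is the shortcut you prefer at the end for faithfulness. Showing that one well-chosen transposition moves some basis $k$-form only shows that the kernel, being normal and hence closed under conjugation, contains no transpositions; it does \emph{not} exclude the kernel being $A_{n+1}$, or the Klein four-group when $n=3$, since those normal subgroups contain no transpositions at all. So this computation cannot ``sidestep the case analysis.'' Your first argument is the one to keep and is complete once written out: the kernel is a normal subgroup of $\Perm(n)\cong S_{n+1}$; for $n+1\geq 5$ a nontrivial kernel would make the (already established) irreducible representation factor through the abelian group $S_{n+1}/A_{n+1}$ or the trivial group, forcing dimension one and contradicting $\binom{n}{k}\geq 2$; for $n=2$ one notes a $3$-cycle acts on the two-dimensional standard representation as a nontrivial rotation, and for $n=3$ a double transposition acts on the standard representation $U$ with eigenvalues $1,-1,-1$, hence nontrivially on both $\Lambda^{1}U$ and $\Lambda^{2}U$, which rules out the Klein four-group (and $A_{4}$ is handled by the $3$-cycle computation). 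With that repair the faithfulness claim is fully justified, and in fact more explicitly than in the paper, which only asserts it.
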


\begin{proof}
 If $0 < k < n$,
 it is easily seen that the representation is faithful 
 since only the identity element of $\Perm(n)$
 acts as the identity on $\bbF\calP_{0}\Lambda^{k}(T)$. 
 That the representation is irreducible 
 can be found in the literature \cite[Proposition 3.12]{fulton2013representation}. 
\end{proof}

We first consider the tetrahedron. 
We build an $\bbR$-invariant basis of $\bbR\calP_0\Lambda^{1}(T)$,
and then construct an $\bbR$-invariant basis for $\bbR\calP_0\Lambda^{2}(T)$ 
by taking the exterior power.

\begin{lemma} \label{lemma:invariantconstant:1forms3D}
 Let $T$ be a $3$-simplex. 
 An $\bbR$-invariant basis of $\bbR\calP_0\Lambda^{1}(T)$ is \begin{subequations} \label{math:invariantconstant:1forms3D}
 \begin{align} 
  \psi_{w} &=   \cartanlambda_0 - \cartanlambda_1 + \cartanlambda_2 - \cartanlambda_3,
  \\
  \psi_{p} &=   \cartanlambda_0 + \cartanlambda_1 - \cartanlambda_2 - \cartanlambda_3,
  \\
  \psi_{k} &=   \cartanlambda_0 - \cartanlambda_1 - \cartanlambda_2 + \cartanlambda_3.
\end{align}
 \end{subequations}
 In particular, this is also a $\bbC$-invariant basis of $\bbC\calP_{0}\Lambda^{1}(T)$. 
\end{lemma}

\begin{proof}
 An elementary calculation verifies that the set is a basis. 
 The permutation group $\Perm(3)$ is generated by the three cycles 
 $(01)$, $(02)$, and $(03)$. Direct computation verifies that 
 \begin{align*}
  S_{(01)}^{\ast} \psi_{w} = -\psi_{k}, 
  \quad 
  S_{(01)}^{\ast} \psi_{p} = +\psi_{p}, 
  \quad 
  S_{(01)}^{\ast} \psi_{k} = -\psi_{w}, 
  \\
  S_{(02)}^{\ast} \psi_{w} = +\psi_{w}, 
  \quad 
  S_{(02)}^{\ast} \psi_{p} = -\psi_{k},
  \quad 
  S_{(02)}^{\ast} \psi_{k} = -\psi_{p}, 
  \\
  S_{(03)}^{\ast} \psi_{w} = -\psi_{p}, 
  \quad 
  S_{(03)}^{\ast} \psi_{p} = -\psi_{w}, 
  \quad 
  S_{(03)}^{\ast} \psi_{k} = +\psi_{k}. 
\end{align*}
 Hence, this set is $\bbR$-invariant. 
\end{proof}

\begin{lemma} \label{lemma:invariantconstant:2forms3D}
 Let $T$ be a $3$-simplex. 
 An $\bbR$-invariant basis of $\bbR\calP_0\Lambda^{2}(T)$ is \begin{align} \label{math:invariantconstant:2forms3D}
  \psi_{w} \wedge \psi_{p},
  \quad
  \psi_{w} \wedge \psi_{k},
  \quad
  \psi_{p} \wedge \psi_{k}.
\end{align}
 In particular, this is also a $\bbC$-invariant basis of $\bbC\calP_{0}\Lambda^{2}(T)$. 
\end{lemma}

\begin{proof}
 We immediately see that these three $2$-forms are a basis of $\bbR\calP_0\Lambda^{2}(T)$. 
 Using the cycles $(01)$, $(02)$, and $(03)$ as in the previous proof,
 direct computation shows 
\begin{gather*}
  \begin{aligned}
    S_{(01)}^{\ast}\left( \psi_{w} \wedge \psi_{p} \right)
&=  \psi_{p} \wedge \psi_{k}
    ,
    &&&
    S_{(01)}^{\ast}\left( \psi_{w} \wedge \psi_{k} \right)
&= -\psi_{w} \wedge \psi_{k}
    ,
    \\
    S_{(02)}^{\ast}\left( \psi_{w} \wedge \psi_{p} \right)
&= -\psi_{w} \wedge \psi_{k}
    ,
    &&&
    S_{(02)}^{\ast}\left( \psi_{w} \wedge \psi_{k} \right)
&= -\psi_{w} \wedge \psi_{p}
    ,
    \\
    S_{(03)}^{\ast}\left( \psi_{w} \wedge \psi_{p} \right)
&= -\psi_{w} \wedge \psi_{p}
    ,
    &&&
    S_{(03)}^{\ast}\left( \psi_{w} \wedge \psi_{k} \right)
&= -\psi_{p} \wedge \psi_{k}
    ,
  \end{aligned}
    \\
  \begin{aligned}
S_{(01)}^{\ast}\left( \psi_{p} \wedge \psi_{k} \right)
&=  \psi_{w} \wedge \psi_{p}
    ,
    \\ S_{(02)}^{\ast}\left( \psi_{p} \wedge \psi_{k} \right)
&= -\psi_{p} \wedge \psi_{k}
    ,
    \\ S_{(03)}^{\ast}\left( \psi_{p} \wedge \psi_{k} \right)
&= -\psi_{w} \wedge \psi_{k}
    ,
\end{aligned}
 \end{gather*}
 Hence, this set is $\bbR$-invariant.
\end{proof}

Next we inspect the triangle, where the situation is more complicated: 
we need to consider not only real but also complex coefficients. 

\begin{lemma} \label{lemma:invariantconstant:1forms2D}
 Let $T$ be a $2$-simplex. 
 A $\bbC$-invariant basis of $\bbC\calP_0\Lambda^{1}(T)$ is \begin{gather} \label{math:invariantconstant:1forms2D}
  \theta_{0} = \cartanlambda_0 + \xi_3     \cartanlambda_1 + \xi_3^{2} \cartanlambda_2,
  \quad 
  \theta_{1} = \cartanlambda_0 + \xi_3^{2} \cartanlambda_1 + \xi_3     \cartanlambda_2,
\end{gather}
 where $\xi_3 = \exp(2\iunit\pi/3 )$ is the cubic root of unity.
 $\bbR\calP_0\Lambda^{1}(T)$ has no $\bbR$-invariant basis. 
\end{lemma}

\begin{proof}
 We easily check that the two vectors constitute a basis and that 
 \begin{align*}
  S_{(01)}^{\ast} \theta_{0} = \xi_3^{ } \theta_{1},
  \quad  
  S_{(01)}^{\ast} \theta_{1} = \xi_3^{2} \theta_{0},
  \\ 
  S_{(02)}^{\ast} \theta_{0} = \xi_3^{2} \theta_{1},
  \quad 
  S_{(02)}^{\ast} \theta_{1} = \xi_3^{ } \theta_{0},
 \end{align*}
 where we have used the cycles $(01), (02) \in \Perm(2)$.
 Since those are generators of $\Perm(2)$,
it follows that $\{ \theta_{0}, \theta_{1} \}$ is a $\bbC$-invariant basis
 of $\bbC\calP_0\Lambda^{1}(T)$.
 
 Suppose that $\bbC\calP_0\Lambda^{1}(T)$ has an $\bbR$-invariant basis. 
 Since our representation of $\Perm(2)$ over $\bbC\calP_0\Lambda^{1}(T)$ is faithful
 by Lemma~\ref{lemma:repisfaithfulirreducible}, 
 it then follows that $\Perm(2)$ is isomorphic to a subgroup
 of the group of $2 \times 2$ signed permutation matrices. 
 The latter group has order $8$ whereas $\Perm(2)$ has order $6$.
This contradicts the well-known fact that the order of a group is divided by the orders of their subgroups.  
So $\bbC\calP_0\Lambda^{1}(T)$ has no $\bbR$-invariant basis.
\end{proof}

Seemingly serendipitously, 
we present a $\bbC$-invariant basis for the constant bivector fields over a $4$-simplex.

\begin{lemma} \label{lemma:invariantconstant:2forms4D}
 Let $T$ be a $4$-simplex.
 Define $\tau, \kappa \in \Perm(4)$ by $\tau = (01)$ and $\kappa = (01234)$. 
 We abbreviate $\cartanlambda_{ij} = \cartanlambda_{i} \wedge \cartanlambda_{j}$ for $0 \leq i, j \leq 4$.
 Then a $\bbC$-invariant basis of $\bbC\calP_0\Lambda^{2}(T)$ is given by 
\begin{align*}
  \zeta_{0} &= 
    \left( 
	\cartanlambda_{01}
	+
    \cartanlambda_{12}
	+
    \cartanlambda_{23}
	+
    \cartanlambda_{34}
	+
    \cartanlambda_{40}
	\right) 
    \\&\qquad+
	\iunit
	\left( 
	\cartanlambda_{02}
	+
    \cartanlambda_{24}
	+
    \cartanlambda_{41}
	+
    \cartanlambda_{13}
	+
    \cartanlambda_{30}
	\right) 
 \end{align*}
 and 
 \begin{gather*}
  \zeta_{1} = S_{\tau}^{\ast} \zeta_{0},
  \quad 
  \zeta_{2} = S_{\kappa}^{\ast} \zeta_{1},
  \quad 
  \zeta_{3} = S_{\kappa}^{\ast} \zeta_{2},
  \quad 
  \zeta_{4} = S_{\kappa}^{\ast} \zeta_{3},
  \quad 
  \zeta_{5} = S_{\kappa}^{\ast} \zeta_{4}.
\end{gather*}
\end{lemma}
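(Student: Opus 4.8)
The claim is that the six $2$-forms $\zeta_0,\dots,\zeta_5$ form a $\bbC$-invariant basis of $\bbC\calP_0\Lambda^{2}(T)$ over a $4$-simplex. Since $\dim \bbC\calP_0\Lambda^{2}(T) = \binom{5}{2} = 6$, the basis claim amounts to showing the six forms are linearly independent. The plan is first to record how $\Perm(4)$ acts on the $\zeta_i$, and then to extract both linear independence and $\bbK$-invariance from that action. The crucial structural fact is that $\Perm(4) = \Perm(0{:}4)$ is generated by the transposition $\tau = (01)$ and the $5$-cycle $\kappa = (01234)$, so it suffices to describe $S_\tau^\ast$ and $S_\kappa^\ast$ on the $\zeta_i$.

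\textbf{Key steps.} First I would verify directly, using \eqref{math:transform:alternators}, that $S_\kappa^\ast \zeta_0 = \xi \zeta_0$ for a fourth root of unity $\xi$: applying $S_\kappa^\ast$ sends $\cartanlambda_{ij}$ to $\pm\cartanlambda_{\kappa(i)\kappa(j)}$, and the specific choice of the ten coefficients $1,\iunit,-\iunit,-1,\dots$ in $\zeta_0$ is engineered precisely so that $\zeta_0$ is an eigenvector of $S_\kappa^\ast$. Granting this, the definitions give $S_\kappa^\ast \zeta_1 = \zeta_2$, $S_\kappa^\ast\zeta_2 = \zeta_3$, $S_\kappa^\ast\zeta_3 = \zeta_4$, $S_\kappa^\ast\zeta_4 = \zeta_5$, and one must check $S_\kappa^\ast \zeta_5 = \xi' \zeta_1$ for another unit $\xi'$ (this is where the cycle must "close up" — a short computation with $\kappa^5 = e$). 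Next I would compute $S_\tau^\ast$ on each $\zeta_i$: by definition $S_\tau^\ast\zeta_0 = \zeta_1$ and $S_\tau^\ast\zeta_1 = \zeta_0$ (since $\tau^2 = e$), and for $i \geq 2$ one rewrites $\zeta_i = (S_\kappa^\ast)^{i-1}\zeta_1$ and uses the braid-type relation between $\tau$ and $\kappa$ in $\Perm(4)$ to express $S_\tau^\ast \zeta_i$ as $\chi_i \zeta_{\sigma(i)}$ for a permutation $\sigma$ of $\{0,\dots,5\}$ and units $\chi_i$. Once all of $S_\tau^\ast$ and $S_\kappa^\ast$ are known to permute $\{\zeta_0,\dots,\zeta_5\}$ up to units, $\bbC$-invariance of the set follows from the generation of $\Perm(4)$ by $\tau,\kappa$, exactly as in the proofs of Lemmas~\ref{lemma:invariantconstant:1forms3D} and \ref{lemma:invariantconstant:1forms2D}.

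\textbf{Linear independence.} It remains to show the six $\zeta_i$ are linearly independent; equivalently, that they span $\bbC\calP_0\Lambda^{2}(T)$. I would argue representation-theoretically: by Lemma~\ref{lemma:repisfaithfulirreducible}, the action of $\Perm(4)$ on $\bbC\calP_0\Lambda^{2}(T)$ is irreducible, so the $\Perm(4)$-invariant subspace $\linhull_{\bbC}\{\zeta_0,\dots,\zeta_5\}$ is either $\{0\}$ or all of $\bbC\calP_0\Lambda^{2}(T)$; since $\zeta_0 \neq 0$, it must be everything, and a spanning set of six vectors in a $6$-dimensional space is a basis. (Alternatively, one can exhibit a nonzero $6\times 6$ determinant in the $\cartanlambda_{ij}$-coordinates, but the irreducibility argument is cleaner and avoids the computation.)

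\textbf{Main obstacle.} The delicate point is verifying that $\zeta_0$ is genuinely an eigenvector of $S_\kappa^\ast$ with a fourth-root-of-unity eigenvalue — i.e., that the ten coefficients were chosen correctly — and then tracking the signs $\eps(\pi\sigma)$ from \eqref{math:transform:alternators} through the action of $\tau$ on the $\zeta_i$ with $i \geq 2$. These are finite but error-prone computations over the $10$ pairs $\{i,j\}\subseteq[0{:}4]$; everything else (generation of $\Perm(4)$, the dimension count, the irreducibility appeal) is routine. The quartic roots of unity appear exactly because $S_\kappa^\ast$ has order $5$ on the underlying index set but acts on $\zeta_0$ through a scalar whose fifth power equals the sign $\eps$ of the induced permutation on pairs — resolving to a primitive fourth root rather than a fifth root.
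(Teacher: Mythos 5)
Your strategy for the invariance part coincides with the paper's: since $\tau$ and $\kappa$ generate $\Perm(4)$, one verifies by direct computation that $S_{\tau}^{\ast}$ and $S_{\kappa}^{\ast}$ act monomially on $\{\zeta_{0},\dots,\zeta_{5}\}$; the paper records precisely $S_{\kappa}^{\ast}\zeta_{0}=\zeta_{0}$, $S_{\kappa}^{\ast}\zeta_{5}=\zeta_{1}$, and $S_{\tau}^{\ast}\zeta_{2}=-\iunit\zeta_{3}$, $S_{\tau}^{\ast}\zeta_{3}=\iunit\zeta_{2}$, $S_{\tau}^{\ast}\zeta_{4}=\iunit\zeta_{5}$, $S_{\tau}^{\ast}\zeta_{5}=-\iunit\zeta_{4}$, and then concludes $\bbC$-invariance from the generation of $\Perm(4)$, exactly as you propose. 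Where you genuinely differ is the basis property: the paper disposes of it ``by elementary calculations'', whereas you note that once the set is invariant, $\linhull_{\bbC}\{\zeta_{0},\dots,\zeta_{5}\}$ is a nonzero invariant subspace and hence, by irreducibility (Lemma~\ref{lemma:repisfaithfulirreducible}), equals the six-dimensional space $\bbC\calP_{0}\Lambda^{2}(T)$, so the six vectors are a basis. That argument is correct and cleaner than an unspecified determinant check, and it is a nice use of a lemma the paper already has on hand. Note also that the ``closing up'' steps you flag are free of charge: $S_{\tau}^{\ast}\zeta_{1}=\zeta_{0}$ and $S_{\kappa}^{\ast}\zeta_{5}=\zeta_{1}$ follow from $\tau^{2}=e$ and $(S_{\kappa}^{\ast})^{5}=\Id$, so the only genuinely computational inputs are $S_{\kappa}^{\ast}\zeta_{0}=\zeta_{0}$ and the action of $S_{\tau}^{\ast}$ on $\zeta_{2},\dots,\zeta_{5}$.

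Two slips should be corrected, though neither breaks the plan. First, $\dim\bbC\calP_{0}\Lambda^{2}(T)=\binom{4}{2}=6$, not $\binom{5}{2}$ (which is $10$): the ten alternators $\cartanlambda_{ij}$ are linearly dependent because $\cartanlambda_{0}+\dots+\cartanlambda_{4}=0$. The number $6$ you use is right, but for the same reason $\zeta_{0}\neq 0$ is not immediate from its displayed coefficients and deserves a one-line check, e.g.\ by eliminating $\cartanlambda_{0}$ and reading off a nonzero coefficient in the basis $\{\cartanlambda_{ij}\}_{1\leq i<j\leq 4}$. Second, your closing explanation of where the quartic roots come from is incorrect: since $\kappa^{5}=e$ gives $(S_{\kappa}^{\ast})^{5}=\Id$ with no sign entering, any eigenvalue of $S_{\kappa}^{\ast}$ on $\zeta_{0}$ must be a fifth root of unity, and the computation yields exactly $1$; it cannot be a primitive fourth root. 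The factors $\pm\iunit$ arise in the action of the transposition $S_{\tau}^{\ast}$ on $\zeta_{2},\dots,\zeta_{5}$, i.e.\ in how $\tau$ permutes the $\kappa$-orbit, not as a $\kappa$-eigenvalue. Since you defer to explicit verification over the ten index pairs anyway, the verification will simply return these values, and the rest of your argument goes through.
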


\begin{proof}
 Recall that $\tau$ and $\kappa$ are generators of the group $\Perm(4)$.
 One easily checks that
 \begin{gather*}
  \zeta_{0} = S_{\kappa}^{\ast} \zeta_{0},
  \quad 
  \zeta_{1} = S_{\kappa}^{\ast} \zeta_{5},
  \quad 
  -\iunit \zeta_{3} = S_{\tau}^{\ast} \zeta_{2},
  \\ 
   \iunit \zeta_{2} = S_{\tau}^{\ast} \zeta_{3},
  \quad 
   \iunit \zeta_{5} = S_{\tau}^{\ast} \zeta_{4},
  \quad 
  -\iunit \zeta_{4} = S_{\tau}^{\ast} \zeta_{5}.
 \end{gather*}
 It follows that these vectors are a $\bbC$-invariant set. 
 That they are a basis is verified by elementary calculations. 
 For example, we expand these forms in terms of a basis of $\bbC\calP_0\Lambda^{2}(T)$, 
 and that the $6 \times 6$ matrix of the coefficients has non-zero determinant.
\end{proof}

\begin{remark}
 Whereas Djokovi\'c and Malzan's results \cite{djokovic1975monomial} include that a monomial representation 
 of $\Perm(4)$ over $\bbC\calP_{0}\Lambda^{2}(T)$ exists,
 they do not state an explicit basis and their argument is not immediately constructive. 
For that reason, we review how the aforementioned basis can be found.
 
 Recall that $\Perm(4)$ is generated by the two cycles $(01)$ and $(01234)$. 
 The $5$-cycle $(01234)$ is represented by a generalized permutation matrix of size $6 \times 6$,
 and so that matrix has the non-zero structure of a $6 \times 6$ permutation matrix of order $5$. 
 In particular, one of the $\bbC$-invariant basis vectors must be invariant under the cyclic vertex permutation.
We make the initial ansatz that the monomial matrices have coefficients in the quartic roots of unity.
 Via machine assisted brute-force search one finds $4$ different vectors with that invariance property, 
 up to multiplication by complex units.

 With the additional ansatz that the $2$-cycle $(01)$ maps these invariant forms 
 into the orbit of the aforementioned $5$-cycle, 
 one constructs five more vectors of the supposed basis.
 One then checks manually their linear independence and their $\bbC$-invariance. 
Up to multiplication by complex units, 
 this procedure only leaves the basis in Lemma~\ref{lemma:invariantconstant:2forms4D} and its complex conjugate.
\end{remark}

\begin{remark} 
 The following observations have been suggested by the anonymous referee 
 and are included as a service for the reader. They shed new light onto the basis vectors above. 
 For any $5$-cycle $g=(abcde) \in \Perm(0:4)$, we let 
 \begin{align*}
  \zeta_{g} = \omega_{g} + \iunit \omega_{g^2},
  \quad 
  \omega_{g} = \cartanlambda_{ab} + \cartanlambda_{bc} + \cartanlambda_{cd} + \cartanlambda_{de} + \cartanlambda_{ea}
  .
 \end{align*}
 We immediately observe $\omega_{g^{-1}} = - \omega_{g}$. Together $g^5 = e$, one calculates 
 \begin{gather*}
  \zeta_{g^{2}}
= 
  - \iunit \zeta_{g},
  \quad 
  \zeta_{g^{3}}
=
  \iunit \zeta_{g},
  \quad 
  \zeta_{g^{4}}
=
  - \zeta_{g}.
 \end{gather*}
 So the $5$-cycles generated by $g$ induce the same $\zeta_{g}$ up to quartic roots of unity. 
 It is clear that relabeling the simplex vertices will send $\zeta_{g}$ to $\zeta_{g'}$ for some $5$-cycle $g' \in \Perm(0:4)$. 
 
 Each $5$-cycle $g \in \Perm(0:4)$ generates a cyclic subgroup of order $5$.
 Since the entire group contains $4!$ different $5$-cycles,
 we see that every $5$-cycle must be belong to exactly one of six different cyclic subgroups.
 Upon choosing six $5$-cycles $g_1,\dots,g_6$
 that generate the six different subgroups,
 the corresponding forms $\zeta_{g_1},\dots,\zeta_{g_6}$ are invariant up to complex units. 
 The $5$-cycles 
 \begin{align*}
  (01234), \quad (10234), \quad (02134), \quad (01324), \quad (01243), \quad (41230)
 \end{align*}
 are such a choice of generators. They induce the basis stated in Lemma~\ref{lemma:invariantconstant:2forms4D}. 
\end{remark}

We have already pointed out Djokovi\'c and Malzan's contribution \cite{djokovic1975monomial}
on monomial representations of the symmetric group.  
The invariant bases constructed in this section concretize their results. 
Apart from the constant scalar and volume forms, for which $\bbR$-invariant bases are obvious, 
the bases found above already are exhaustive examples:
no other spaces of constant differential forms over simplices of any dimension 
allows for $\bbC$-invariant bases. 
That is the content of the following result.

\begin{theorem} \label{lemma:invariantconstant:classification}
 The space $\calP_{0}\Lambda^{k}(T)$ has a $\bbC$-invariant basis 
 only if $k=0$ or if $k=n$ or if $\dim T \leq 3$ or if $k=2$ with $\dim(T) = 4$.
\end{theorem}

\begin{proof}
    We recall that the representations of $\Perm(n)$ over $\calP_{0}\Lambda^{k}(T)$ are irreducible. 
    Djokovi\'c and Malzan have shown \cite[Theorem~1]{djokovic1975monomial} that 
    the only induced monomial irreducible representation of the group $\Perm(n)$
    over spaces of constant differential forms
    are the trivial and the alternating representations,
    which corresponds to the group action on the space of constant functions and constant volume forms,
    the irreducible representations of $\Perm(2)$ and $\Perm(3)$,
    and an irreducible representation of $\Perm(4)$ on the space $\calP_{0}\Lambda^{2}(T)$
    for any $4$-dimensional simplex $T$. 
    Thus Lemma~\ref{lemma:invarianceof:simpleexamples},
    and Lemmas~\ref{lemma:invariantconstant:1forms3D}, \ref{lemma:invariantconstant:2forms3D},
     \ref{lemma:invariantconstant:1forms2D},~and~\ref{lemma:invariantconstant:2forms4D}
    cover the irreducible representations of symmetric groups over constant differential forms.\footnote{The group $\Perm(0:3)$ also has a two-dimensional induced monomial irreducible (and hence monomial) representation, but this is of no interest in our applications.}
All other irreducible representations of $\Perm(n)$ are not induced monomial. 
    Since induced monomial irreducible representations are monomial, 
    the theorem follows. 
\end{proof}

\section{Canonical Isomorphisms} \label{sec:canonicalisomorphism}

In this section we review the interaction of simplicial symmetries 
with the canonical isomorphisms in finite element exterior calculus.
We show that the isomorphisms preserve $\bbF$-invariance of sets.
These isomorphisms were discussed in \cite{AFW1} and also \cite{christiansen2016high};
we follow the discussion in \cite{licht2022basis},
where it is shown that these isomorphisms can be described in terms of the canonical spanning sets. 
In that sense, the isomorphisms are natural for finite element exterior calculus.
\\

Let $k,r \in \bbN_{0}$ with $r \geq 0$. Recall the canonical isomorphisms
\begin{subequations}
\begin{gather}
 \calI_{k,r} : \calP_{r}\Lambda^{k}(T) \rightarrow \mathring\calP^{-}_{r+k+1}\Lambda^{n-k}(T)
 ,
 \\
 \calJ_{k,r} : \calP_{r+1}^{-}\Lambda^{k}(T) \rightarrow \mathring\calP_{r+k+1}\Lambda^{n-k}(T)
 .
\end{gather}
\end{subequations}
These are uniquely defined by the identities 
\begin{subequations}
\begin{align}
 \calI_{k,r}\left( 
   \lambda^{\alpha} \cartanlambda_{\sigma}
 \right)
 &=
 \eps(\sigma,\sigma^{c})
 \lambda^{\alpha} \lambda_{\sigma} \whitney_{\sigma^{c}}
 ,
 \quad 
 \alpha \in A(r,n), \quad \sigma \in \Sigma(k,n)
 ,
 \\
 \calJ_{k,r}\left( 
  \lambda^{\alpha} \whitney_{\rho}
 \right)
 &=
 \eps(\rho^{c},\rho)
 \lambda^{\alpha} \lambda_{\rho} \cartanlambda_{\rho^{c}}
 ,
 \quad 
 \alpha \in A(r,n), \quad \rho \in \Sigma_{0}(k,n)
 .
\end{align}
\end{subequations}
Note that these two identities prescribe the values of $\calI_{k,r}$ and $\calJ_{k,r}$ 
over the canonical spanning sets, which are not necessarily linearly independent. 
However, one can show that these definitions nevertheless yield well-defined $\bbF$-linear mappings \cite{licht2022basis}. 

\begin{remark}
 The seminal idea of these isomorphisms is 
 mapping between finite element spaces without and with boundary conditions
 via multiplication by monomial ``bubble'' functions.
 For example, in the case $k=n$, 
 we have $\calJ_{n,r}( f \vol_T ) = \lambda_{0}\lambda_{1}\cdots\lambda_{n}\cdot f$
 for all $f \in \calP_{r}\Lambda^{0}(T)$,
 where $\vol_T$ denotes the volume form of $T$.
 The canonical isomorphisms generalize that idea. 
\end{remark}

The following lemma shows that the canonical isomorphisms commute with the simplicial symmetries 
up to sign changes. 

\begin{theorem} \label{theorem:isomorphisminvariance}
 Let $\pi \in \Perm(n)$ and $S_{\pi} \in \Sym(T)$. 
 Then
 \begin{gather*}
  S_{\pi}^{\ast} \calI_{k,r} = \eps(\pi) \calI_{k,r} S_{\pi}^{\ast},
  \quad 
  S_{\pi}^{\ast} \calJ_{k,r} = \eps(\pi) \calJ_{k,r} S_{\pi}^{\ast},
  \\
  S_{\pi}^{\ast} \calI_{k,r}^{-1} = \eps(\pi) \calI_{k,r}^{-1} S_{\pi}^{\ast},
  \quad 
  S_{\pi}^{\ast} \calJ_{k,r}^{-1} = \eps(\pi) \calJ_{k,r}^{-1} S_{\pi}^{\ast}.
 \end{gather*}
\end{theorem}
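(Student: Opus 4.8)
The plan is to verify the four commutation identities on the canonical spanning sets, since $\calI_{k,r}$ and $\calJ_{k,r}$ are $\bbK$-linear and are \emph{defined} by their action on these sets; once an identity holds on a spanning set it holds everywhere by linearity. The identities for $\calI_{k,r}^{-1}$ and $\calJ_{k,r}^{-1}$ then follow formally: if $S_{\pi}^{\ast}\calI_{k,r} = \eps(\pi)\calI_{k,r}S_{\pi}^{\ast}$, then pre- and post-composing with the inverses gives $\calI_{k,r}^{-1}S_{\pi}^{\ast} = \eps(\pi)S_{\pi}^{\ast}\calI_{k,r}^{-1}$ (using $\eps(\pi)^2 = 1$), which is the claimed identity. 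So the crux is the two forward identities, and really just the one for $\calI_{k,r}$, since the argument for $\calJ_{k,r}$ is entirely parallel (acting on $\lambda^{\alpha}\lambda_{\sigma^{c}}\cartanlambda_{\sigma}$ rather than $\lambda^{\alpha}\cartanlambda_{\sigma}$, and landing in $\calP^{-}$ rather than $\mathring\calP^{-}$).

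For the $\calI_{k,r}$ identity, I would fix $\sigma \in \Sigma(k,n)$ and $\alpha \in A(r,n)$ and compute both $S_{\pi}^{\ast}\calI_{k,r}(\lambda^{\alpha}\cartanlambda_{\sigma})$ and $\eps(\pi)\calI_{k,r}S_{\pi}^{\ast}(\lambda^{\alpha}\cartanlambda_{\sigma})$ and check they agree. On the left, $\calI_{k,r}(\lambda^{\alpha}\cartanlambda_{\sigma}) = \eps(\sigma,\sigma^{c})\lambda^{\alpha}\lambda_{\sigma}\phi_{\sigma^{c}}$, and then I apply $S_{\pi}^{\ast}$ using the transformation rules \eqref{math:transform:monomials} for $\lambda^{\alpha}$, the fact that $\lambda_{\sigma} = \prod_{i\in[\sigma]}\lambda_{i}$ transforms by relabeling its index set, and \eqref{math:transform:whitneyforms} for the Whitney form $\phi_{\sigma^{c}}$. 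On the right, I first apply $S_{\pi}^{\ast}$ to $\lambda^{\alpha}\cartanlambda_{\sigma}$ using \eqref{math:transform:monomials} and \eqref{math:transform:alternators}, reorder the resulting alternator index into ascending order via Lemma~\ref{lemma:basicpermutationlemma}, then apply $\calI_{k,r}$ to the resulting basis element, and finally multiply by $\eps(\pi)$. The point is that both sides produce a scalar multiple of the same barycentric form $\lambda^{\alpha\pi^{-1}}\lambda_{\sigma'}\phi_{(\sigma')^{c}}$ where $\sigma'$ is the ascending reordering of $[\pi\sigma]$, so it suffices to track the signs.

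The main obstacle is the sign bookkeeping: I need to show that the product of signs on the left, coming from $\eps(\sigma,\sigma^{c})$ together with the sign $\eps(\pi\sigma^{c})$ from transporting $\phi_{\sigma^{c}}$, equals $\eps(\pi)$ times the product of signs on the right, coming from transporting $\cartanlambda_{\sigma}$ (a factor $\eps(\pi\sigma)$, via Lemma~\ref{lemma:basicpermutationlemma} after reordering) and from the $\eps(\sigma',(\sigma')^{c})$ appearing when $\calI_{k,r}$ is applied to the reordered element. The key combinatorial fact I expect to need is a compatibility relation of the form $\eps(\pi\sigma)\,\eps(\pi\sigma^{c}) = \eps(\pi)\,\eps(\sigma,\sigma^{c})\,\eps(\sigma',(\sigma')^{c})$, which should reduce — after canceling the reordering permutations for $\sigma$ and $\sigma^{c}$ separately — to the statement that the sign of the permutation sorting $\pi(0),\dots,\pi(n)$ decomposes into the sign of sorting the $[\sigma]$-block, the sign of sorting the $[\sigma^{c}]$-block, and the sign of the shuffle interleaving the two blocks. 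This is a standard determinant/shuffle identity and I would isolate it as a short sublemma about $\eps$ on $\Sigma(k,n)$ rather than embedding it in the main computation. Once that identity is in hand, the rest is substitution, and the $\calJ_{k,r}$ case goes through verbatim with $\mathring\calP^{-}$ replaced by $\calP^{-}$ and the extra factor $\lambda_{\sigma^{c}}$ transforming the same way as $\lambda_{\sigma}$ did.
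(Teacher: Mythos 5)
Your plan is correct and follows essentially the same route as the paper: the paper also verifies the two forward identities on the canonical spanning sets via \eqref{math:transform:monomials}, \eqref{math:transform:alternators}, \eqref{math:transform:whitneyforms} and Lemma~\ref{lemma:basicpermutationlemma}, reduces the sign bookkeeping to a single combinatorial shuffle identity (your $\eps(\pi\sigma)\,\eps(\pi\sigma^{c}) = \eps(\pi)\,\eps(\sigma,\sigma^{c})\,\eps(\sigma',(\sigma')^{c})$ is an equivalent formulation of the identity used there), treats $\calJ_{k,r}$ in parallel, and obtains the statements for $\calI_{k,r}^{-1}$ and $\calJ_{k,r}^{-1}$ by the same composition-with-inverses argument using $\eps(\pi)^{2}=1$. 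No gaps; the sublemma you propose is exactly the crux and is a true statement.
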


\begin{proof}
 Let $\alpha \in A(r,n)$, $\sigma \in \Sigma(k,n)$, and $\pi \in \Perm(n)$.
 We let $\widehat\sigma \in \Sigma(k,n)$ satisfy $[\widehat\sigma] = [\pi\sigma]$. 
 We also write $\widehat \alpha = \alpha \pi^{-1}$. 
Using the results of Section~\ref{sec:invariance},
 direct calculation now shows that  
 \begin{align*}
  S_{\pi}^{\ast} \calI_{k,r} \left( \lambda^{\alpha} \cartanlambda_{\sigma} \right)
  &=
  \eps(\sigma,\sigma^c) S_{\pi}^{\ast} \left( \lambda^{\alpha} \lambda_{\sigma} \whitney_{\sigma^c} \right)
\\&=
  \eps(\sigma,\sigma^c) \eps(\pi\sigma^c)
  \lambda^{\widehat \alpha} \lambda_{\widehat\sigma} \whitney_{\widehat\sigma^c}
  \\&=
  \eps(\sigma,\sigma^c) \eps(\pi\sigma^c) \eps(\widehat\sigma,\widehat\sigma^c)
  \calI_{k,r} ( \lambda^{\widehat \alpha} \cartanlambda_{\widehat\sigma} )
\\&=
  \eps(\sigma,\sigma^c) \eps(\pi\sigma^c) \eps(\widehat\sigma,\widehat\sigma^c) \eps(\pi\sigma)
  \calI_{k,r} S_{\pi}^{\ast} ( \lambda^{\alpha} \cartanlambda_{\sigma} )
  . 
 \end{align*}
 We now use the following combinatorial observation. 
 Starting with the sequence $0,1,\dots,n$,
 a first permutation of sign $\eps(\widehat\sigma,\widehat\sigma^c)$
 produces the sequence $\widehat\sigma$ followed by $\widehat\sigma^c$.
 Two further permutations of signs $\eps(\pi\sigma)$ and $\eps(\pi\sigma^c)$, respectively, 
 bring these two subsequences into the form $\pi\sigma$ followed by $\pi\sigma^c$. 
 A final permutation of sign $\eps(\sigma,\sigma^c)$ 
 produces the sequence $\pi(0),\pi(1),\dots,\pi(n)$.
 Hence 
 \begin{align*}
  \eps(\pi\sigma,\pi\sigma^c)
  \eps(\pi\sigma) \eps(\pi\sigma^c) 
  \eps(\sigma,\sigma^c)
  =
  \eps(\pi)
  . 
 \end{align*}
 The desired identity for the first canonical isomorphism follows. 

 Analogous calculations work for the other isomorphism.
 Let $\rho \in \Sigma_{0}(k,n)$ and $\widehat\rho \in \Sigma_{0}(k,n)$ satisfy $[\widehat\rho] = [\pi\rho]$. 
 Let $\alpha \in A(r,n)$ and $\widehat \alpha = \alpha \pi^{-1}$. 
 Then
 \begin{align*}
  S_{\pi}^{\ast} \calJ_{k,r} \left( \lambda^{\alpha} \whitney_{\rho} \right)
  &=
  \eps(\rho^{c},\rho)
  S_{\pi}^{\ast} \left( \lambda^{\alpha} \lambda_{\rho} \cartanlambda_{\rho^{c}} \right)
\\&=
  \eps(\rho^{c},\rho) \eps(\pi\rho^c)
  \lambda^{\widehat\alpha} \lambda_{\widehat\rho} \cartanlambda_{\widehat\rho^{c}} 
  \\&=
  \eps(\rho^{c},\rho) \eps(\pi\rho^c) \eps(\widehat\rho^{c},\widehat\rho)
  \calJ_{k,r} ( 
    \lambda^{\widehat\alpha} \whitney_{\widehat\rho} 
  )
\\&=
  \eps(\rho^{c},\rho) \eps(\pi\rho^c) \eps(\widehat\rho^{c},\widehat\rho) \eps(\pi\rho)
  \calJ_{k,r} S_{\pi}^{\ast} ( \lambda^{\alpha} \whitney_{\rho} )
  \\&=
  \eps(\pi)
  \calJ_{k,r} S_{\pi}^{\ast} ( \lambda^{\alpha} \whitney_{\rho} )
  . 
 \end{align*}
 Finally, 
we observe 
 \begin{gather*}
  \calI_{k,r}^{-1} S_{\pi}^{\ast} 
  = 
  \calI_{k,r}^{-1} S_{\pi}^{\ast} \calI_{k,r} \calI_{k,r}^{-1} 
  = 
  \eps(\pi)
  \calI_{k,r}^{-1} \calI_{k,r} S_{\pi}^{\ast} \calI_{k,r}^{-1} 
  = 
  \eps(\pi)
  S_{\pi}^{\ast} \calI_{k,r}^{-1},
  \\ 
  \calJ_{k,r}^{-1} S_{\pi}^{\ast} 
  = 
  \calJ_{k,r}^{-1} S_{\pi}^{\ast} \calJ_{k,r} \calJ_{k,r}^{-1} 
  = 
  \eps(\pi)
  \calJ_{k,r}^{-1} \calJ_{k,r} S_{\pi}^{\ast} \calJ_{k,r}^{-1} 
  = 
  \eps(\pi)
  S_{\pi}^{\ast} \calJ_{k,r}^{-1}.
\end{gather*}
 This completes the proof. 
\end{proof}

As a direct consequence of this theorem, the canonical isomorphisms and their inverses map $\bbF$-invariant sets onto $\bbF$-invariant sets. 
We will use the following important corollary for constructing $\bbF$-invariant bases.

\begin{corollary} \label{corollary:isomorphisminvariantbases}
 Let $T$ be an $n$-simplex, and $k,r \in \bbN_{0}$ with $r \geq 0$. Then:
 \begin{align*}
  \calQ \subseteq \calP_{r}\Lambda^{k}(T) \text{ is } \bbF \text{-invariant}
  &\iff 
  \calI_{k,r} \calQ \subseteq \mathring\calP_{r+k+1}^{-}\Lambda^{n-k}(T) \text{ is } \bbF \text{-invariant}
  \\
  \calQ \subseteq \calP_{r+1}^{-}\Lambda^{k}(T) \text{ is } \bbF \text{-invariant}
  &\iff 
  \calJ_{k,r} \calQ \subseteq \mathring\calP_{r+k+1}\Lambda^{n-k}(T) \text{ is } \bbF \text{-invariant}
 \end{align*}  
\end{corollary}

\section{Traces and Extension Operators} \label{sec:extension}

In this section we study the relation of simplicial symmetries 
with traces, extension operators, and geometric decompositions of bases. 
The traces of $\bbF$-invariant sets are $\bbF$-invariant again. 
Conversely, we discuss extension operators that preserve $\bbF$-invariant sets.
An important result is that an $\bbF$-invariant geometrically decomposed basis exists 
if and only if 
such bases exist for each component in the geometric decomposition. 

We first prove that taking traces preserves $\bbF$-invariance. 

\begin{lemma} \label{lemma:traces}
 Let $T$ be an $n$-dimensional simplex and let $F \subseteq T$ be a subsimplex. 
 If a finite set $\calQ \subseteq \bbF\Lambda^{k}(T)$ is $\bbF$-invariant, 
 then $\trace_{T,F} \calQ \subseteq \bbF\Lambda^{k}(F)$ is $\bbF$-invariant.
\end{lemma}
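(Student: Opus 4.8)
The statement asserts that the trace operator $\trace_{T,F}$ sends $\bbK$-invariant subsets of $\bbK\Lambda^{k}(T)$ to $\bbK$-invariant subsets of $\bbK\Lambda^{k}(F)$. The key point is to relate the action of $\Sym(F)$ on the trace space to the action of $\Sym(T)$ on the original space, using the naturality (functoriality) of the pullback with respect to composition of affine maps. First I would recall that $\Sym(F) \simeq \Perm(m)$ where $m = \dim F$, and that since the vertices of $F$ are ordered compatibly with those of $T$ via $\imath(F,T) \in \Sigma_{0}(m,n)$, every simplicial symmetry $S^{F}_{\pi} \in \Sym(F)$ for $\pi \in \Perm(m)$ extends to a simplicial symmetry $S^{T}_{\tilde\pi} \in \Sym(T)$ for a suitable $\tilde\pi \in \Perm(n)$ that permutes the vertex indices in $[\imath(F,T)]$ according to $\pi$ and fixes the remaining indices; crucially $S^{T}_{\tilde\pi}$ maps $F$ to itself and restricts to $S^{F}_{\pi}$ on $F$, i.e. $\imath(F,T) \circ S^{F}_{\pi} = S^{T}_{\tilde\pi} \circ \imath(F,T)$.

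\textbf{Key steps.} Write $\calR := \trace_{T,F}\calQ = \imath(F,T)^{\ast}\calQ$, say $\calR = \{ \trace_{T,F}\omega_{1}, \dots, \trace_{T,F}\omega_{M} \}$ where $\calQ = \{\omega_{1},\dots,\omega_{M}\}$ (one should note these traces need not be distinct, but $\bbK$-invariance as defined only requires the orbit-with-scalars structure, so this is harmless, or one passes to the underlying set and the argument still produces a permutation). Given an arbitrary $S^{F}_{\pi} \in \Sym(F)$, choose the extension $S^{T}_{\tilde\pi} \in \Sym(T)$ as above. By $\bbK$-invariance of $\calQ$ there is a permutation $\tau \in \Perm(1:M)$ and nonzero scalars $\chi_{1},\dots,\chi_{M} \in \bbK$ with $(S^{T}_{\tilde\pi})^{\ast}\omega_{i} = \chi_{i}\,\omega_{\tau(i)}$ for each $i$. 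Now apply $\trace_{T,F}$ and use functoriality of pullback together with the intertwining identity:
\begin{align*}
 (S^{F}_{\pi})^{\ast}\bigl(\trace_{T,F}\omega_{i}\bigr)
 &= (S^{F}_{\pi})^{\ast}\,\imath(F,T)^{\ast}\omega_{i}
 = \bigl(\imath(F,T)\circ S^{F}_{\pi}\bigr)^{\ast}\omega_{i}
 = \bigl(S^{T}_{\tilde\pi}\circ\imath(F,T)\bigr)^{\ast}\omega_{i}
 \\&= \imath(F,T)^{\ast}\,(S^{T}_{\tilde\pi})^{\ast}\omega_{i}
 = \imath(F,T)^{\ast}\bigl(\chi_{i}\,\omega_{\tau(i)}\bigr)
 = \chi_{i}\,\trace_{T,F}\omega_{\tau(i)}.
\end{align*}
This exhibits exactly the required permutation-plus-scalars behaviour of $(S^{F}_{\pi})^{\ast}$ on $\calR$, so $\calR$ is $\bbK$-invariant.

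\textbf{Main obstacle.} The routine calculations — functoriality of pullback, $\bbK$-linearity of the trace — are standard and already recorded in the excerpt. The one step that deserves care is the construction of the extension $S^{T}_{\tilde\pi}$ and the verification of the intertwining identity $\imath(F,T)\circ S^{F}_{\pi} = S^{T}_{\tilde\pi}\circ\imath(F,T)$; this rests on the standing assumption (Section~\ref{subsec:simplices}) that vertices of $T$ are ordered compatibly with those of its subsimplices, which is precisely what makes $\tilde\pi$ well-defined. A secondary, purely cosmetic, point is that $\trace_{T,F}$ need not be injective on $\calQ$, so strictly speaking one should define $\calR$ as the image set and observe that the displayed identity still gives a well-defined permutation action on it (or simply note that the definition of $\bbK$-invariance is insensitive to repeated listing); I would mention this in one sentence rather than belabour it.
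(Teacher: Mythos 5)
Your proof is correct and takes essentially the same route as the paper: both arguments rest on the commutation of $\trace_{T,F}$ with pullbacks together with the $\bbK$-invariance of $\calQ$ under a symmetry of $T$ that maps $F$ onto itself. The only difference is that you spell out the extension of an arbitrary $S^{F}_{\pi} \in \Sym(F)$ to a symmetry $S^{T}_{\tilde\pi} \in \Sym(T)$ preserving $F$, a step the paper's proof leaves implicit by simply taking $S \in \Sym(T)$ with $S(F) = F$.
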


\begin{proof}
 Let $S_{} \in \Sym(T)$ such that $S_{}(F) = F$. 
 Let $\calQ = \left\{ \omega_{1}, \dots, \omega_{M} \right\}$, 
 where $M$ denotes the size of $\calQ$.
 Since $\calQ$ is $\bbF$-invariant,
 there exist 
 units $\chi_{1},\dots,\chi_{M} \in \bbF$
 and 
 a permutation $\tau \in \Perm(1:M)$
 such that $S_{}^{\ast} \omega_{i} = \chi_{i} \omega_{\tau(i)}$
 for $1 \leq i \leq M$.
 
 There exists $S_{F} \in \Sym(F)$
 which reorders the vertices of $F$ in the same way as $S$ does. 
 We observe $S \circ \imath(F,T) = \imath(F,T) \circ S_{F}$, 
 where $\imath(F,T) : F \rightarrow T$ is the natural inclusion.
 Hence 
 \begin{align*}
  S_{F}^{\ast} \trace_{T,F} \omega_{i}
  &=
  S_{F}^{\ast} \imath(F,T)^{\ast} \omega_{i}
  =
  \imath(F,T)^{\ast} S_{}^{\ast} \omega_{i}
  \\&=
  \trace_{T,F} S_{}^{\ast} \omega_{i}
  =
  \trace_{T,F} \chi_{i} \omega_{\tau(i)}
  =
  \chi_{i} \trace_{T,F} \omega_{\tau(i)}
  \in
  \chi_{i} \trace_{T,F} \calQ
  ,
 \end{align*}
 which had to be shown. \end{proof}

The idea of geometrically decomposed bases is central to finite element exterior calculus. 
Usually, geometrically decomposed bases are constructed explicitly via specific extension operators
\cite{AFWgeodecomp,licht2022basis}.
As a preparation, we introduce geometric decompositions on a slightly more abstract level
where, importantly, we already study $\bbF$-invariant sets.

Let $T$ be an $n$-simplex and let $k, r \in \bbN_{0}$ with $r > 0$.
Suppose that $\calQ\calP_{r}\Lambda^{k}(T)$ is a basis of $\calP_{r}\Lambda^{k}(T)$.
We call such a basis \emph{geometrically decomposed} if 
it is the disjoint union 
\begin{align} \label{math:generalgeometricdecomposition}
    \calQ\calP_{r}\Lambda^{k}(T)
    :=
    \bigcup_{ F \subseteq T } \calQ_{F}
\end{align}
where $F$ ranges over all the subsimplices of $T$,
and where $\calQ_{F}$ satisfies, on the one hand, 
that the trace from $T$ to $F$ maps $\calQ_{F}$ bijectively onto a basis of $\mathring\calP_{r}\Lambda^{k}(F)$,
and on the other hand, that $\trace_{T,G} \calQ_{F} = \{0\}$ whenever $G$ is a subsimplex of $T$ not containing $F$. 
We define \emph{geometrically decomposed} bases of $\calP^{-}_{r}\Lambda^{k}(T)$ completely analogously.

\begin{example}
 As we shall discuss in more details below,
 our notion of geometric decomposition is only a minor generalization 
 of earlier decompositions in the literature \cite{AFW1,AFWgeodecomp}.
 The bases $\calB\calP_{r}\Lambda^{k}(T)$ and $\calB\calP^{-}_{r}\Lambda^{k}(T)$ are geometrically decomposed. 
 Notably, not all of them are $\bbF$-invariant. 
 For further illustration, suppose that $T$ is a triangle. 
 The barycentric coordinates $\lambda^{T}_{0}, \lambda^{T}_{1}, \lambda^{T}_{2}$ are a geometrically decomposed basis of $\calP_{1}\Lambda^{0}(T)$,
 whereas the basis 
 $\lambda^{T}_{0}+\lambda^{T}_{1}, \lambda^{T}_{0}+\lambda^{T}_{2}, \lambda^{T}_{1}+\lambda^{T}_{2}$
 is not geometrically decomposed.
 Both bases, however, are $\bbF$-invariant. 
\end{example}

\begin{theorem} \label{theorem:geometricdecomposition:sullivan}
    Let $T$ be an $n$-simplex and let $k, r \in \bbN_{0}$ with $r > 0$.
    Let $\calQ\calP_{r}\Lambda^{k}(T)$ be a basis of $\calP_{r}\Lambda^{k}(T)$
    with geometric decomposition \eqref{math:generalgeometricdecomposition}.
    Then $\calQ_{T}$ is a basis of $\mathring\calP_{r}\Lambda^{k}(T)$.
    For any subsimplex $G$ of $T$, 
    a geometrically decomposed basis of $\calP_{r}\Lambda^{k}(G)$ is given by 
    \begin{align} \label{math:geometricdecomposition:sullivan}
    \calQ\calP_{r}\Lambda^{k}(G) = \bigcup_{ F \subseteq G } \trace_{T,G} \calQ_{F}
    .
    \end{align}
    If $\calQ\calP_{r}\Lambda^{k}(T)$ is $\bbF$-invariant, 
    then $\calQ_{T}$ and $\calQ\calP_{r}\Lambda^{k}(G)$ are $\bbF$-invariant. 
\end{theorem}

\begin{proof}
    Suppose that $\calQ\calP_{r}\Lambda^{k}(T)$ is a geometrically decomposed basis of $\calP_{r}\Lambda^{k}(T)$.
    That $\calQ_{T}$ is a basis of $\mathring\calP_{r}\Lambda^{k}(T)$ follows from definitions. 
    We show that $\calQ\calP_{r}\Lambda^{k}(G)$ is a basis of $\calP_{r}\Lambda^{k}(G)$
    when $G$ is any subsimplex of $T$. 
    
    By assumption, $\trace_{T,G} \calQ_{F} = \{0\}$ if $F$ is not a subsimplex of $G$. 
    If instead $F \subseteq G$, then by assumption, 
    $\trace_{T,F} : \calQ_{F} \rightarrow \trace_{T,F}\calQ_{F}$ 
    is a bijection and its image is basis of $\mathring\calP_{r}\Lambda^{k}(F)$.
    Because $\trace_{T,F} \calQ_{F} = \trace_{G,F} \trace_{T,G} \calQ_{F}$,
    the trace $\trace_{G,F} : \trace_{T,G} \calQ_{F} \rightarrow \trace_{T,F}\calQ_{F}$ 
    is a bijection too. 

    That $\calQ\calP_{r}\Lambda^{k}(G)$ spans $\calP_{r}\Lambda^{k}(G)$ is easily seen:
    \begin{align*}
     \calP_{r}\Lambda^{k}(G)
     =
     \trace_{T,G} \calP_{r}\Lambda^{k}(T)
     &=
     \sum_{ F \subseteq T } \trace_{T,G} \linhull\calQ_{F}
     \\
	 &=
     \sum_{ F \subseteq G } \trace_{T,G} \linhull\calQ_{F}
     =
     \sum_{ F \subseteq G } \linhull\trace_{T,G}\calQ_{F}
     .
    \end{align*}
    To show that \eqref{math:geometricdecomposition:sullivan} defines a linearly independent set,
	let $\omega \in \calP_{r}\Lambda^{k}(G)$
	be the sum of $\omega_F \in \linhull\trace_{T,G}\calQ_{F}$, $F \subseteq G$, not all zero.
	Then there exists $F$ of minimal dimension with $\omega_F \neq 0$,
	and thus $\trace_{G,F} \omega = \trace_{G,F} \omega_{F} \neq 0$. 
	In particular, $\omega \neq 0$. 
Lastly, that $\calQ\calP_{r}\Lambda^{k}(F)$ is geometrically decomposed follows from the observations above.
        
    Suppose that $\calQ\calP_{r}\Lambda^{k}(T)$ is $\bbF$-invariant. 
    By Lemma~\ref{lemma:traces}, the set of traces $\trace_{T,G}\calQ\calP_{r}\Lambda^{k}(T)$ is $\bbF$-invariant,
	and therefore its subset of non-zero traces is $\bbF$-invariant.
    Finally,
	$\calQ_{T}$ is $\bbF$-invariant
	since $\mathring\calP_{r}\Lambda^{k}(T) \cap \calQ\calP_{r}\Lambda^{k}(T) = \calQ_{T}$
    and $\mathring\calP_{r}\Lambda^{k}(T)$ is $\frakr$-invariant. 
\end{proof}

\begin{theorem} \label{theorem:geometricdecomposition:whitney}
    Let $T$ be an $n$-simplex and let $k, r \in \bbN_{0}$ with $r > 0$.
    Let $\calQ\calP^{-}_{r}\Lambda^{k}(T)$ be a basis of $\calP^{-}_{r}\Lambda^{k}(T)$
    with geometric decomposition analogous to \eqref{math:generalgeometricdecomposition}.
    Then $\calQ_{T}$ is a basis of $\mathring\calP^{-}_{r}\Lambda^{k}(T)$.
    For any subsimplex $G$ of $T$, 
    a geometrically decomposed basis of $\calP_{r}^{-}\Lambda^{k}(G)$ is given by 
    \begin{align}
    \calQ\calP^{-}_{r}\Lambda^{k}(G) = \bigcup_{ F \subseteq G } \trace_{T,G} \calQ_{F}
    .
    \end{align}
    If $\calQ\calP^{-}_{r}\Lambda^{k}(T)$ is $\bbF$-invariant, 
    then $\calQ_{T}$ and $\calQ\calP^{-}_{r}\Lambda^{k}(G)$ are $\bbF$-invariant. 
\end{theorem}

\begin{proof}
 This is completely analogous to the proof of Theorem~\ref{theorem:geometricdecomposition:sullivan}. 
\end{proof}

Up to now, we have studied properties of any geometrically decomposed basis 
and how this definition interacts with our notion of invariance. 
Most importantly, invariant geometrically decomposed bases 
give rise to invariant decomposed bases for certain subspaces and trace spaces. 
Shifting our focus to extension operators, more specific statements are possible. 
\\

Extension operators that facilitate geometric decompositions are widely used  
in finite element exterior calculus \cite{AFW1,AFWgeodecomp,licht2022basis}.
For our purpose, we utilize the extension operators given by Arnold, Falk, and Winther \cite{AFWgeodecomp}.
Their extension operators are described over spanning sets, not bases,
but this still yields well-defined linear mappings. 

Let $T$ be an $n$-dimensional simplex and let $F \subseteq T$ be an $m$-dimensional subsimplex,
and let $k, r \in \bbN_{0}$ with $r > 0$.
The extension operators for the $\calP_r^{-}\Lambda^{k}$-family of spaces, 
\begin{align}
 \ext^{k,r,-}_{F,T}
 :
 \mathring\calP^{-}_{r}\Lambda^{k}(F)
 \rightarrow
 \calP^{-}_{r}\Lambda^{k}(T)
 , 
\end{align}
are uniquely defined by setting 
\begin{align}
 \ext^{k,r,-}_{F,T} \lambda_{F}^{\alpha} \whitney^{F}_{\rho}
 =
 \lambda_{T}^{ \tilde\alpha } \whitney^{T}_{ \tilde\rho }
 ,
\end{align}
for all $\rho \in \Sigma_{0}(k,m)$ and $\alpha \in A(r-1,m)$, 
where $\tilde\rho = \imath(F,T) \circ \rho \in \Sigma_{0}(k,n)$, 
and where $\tilde\alpha \in A(r-1,n)$ is uniquely defined by requiring $\tilde\alpha \circ \imath(F,T) = \alpha$;
in particular, $\alpha$ is zero outside of $[\imath(F,T)]$.
This prescribes the extension operator 
over the spanning set $\calS\mathring\calP_r^{-}\Lambda^{k}(F)$ of the space $\mathring\calP_r^{-}\Lambda^{k}(F)$,
and one can show \cite[Section~7]{AFWgeodecomp} that this defines a linear operator.

The definition of the extension operators in the $\calP_r\Lambda^{k}$-family,
\begin{align}
 \ext^{k,r}_{F,T} :
 \mathring\calP_{r}\Lambda^{k}(F)
 \rightarrow
 \calP_{r}\Lambda^{k}(T)
 , 
\end{align}
is slightly more intricate. 
For any $\alpha \in A(r,n)$ and $\sigma \in \Sigma(k,n)$ we define 
\begin{align}
 \Psi_{i}^{\alpha,F,T}
 &:=
 \cartanlambda^{T}_{i} - \dfrac{\alpha(i)}{\lvert\alpha\rvert} \sum_{ j \in [\imath(F,T)] } \cartanlambda^{T}_{j}
 ,
 \quad i \in [0:n],
\\
 \Psi_{\sigma}^{\alpha,F,T}
 &:=
 \Psi_{\sigma(1)}^{\alpha,F,T}
 \wedge
 \dots
 \wedge
 \Psi_{\sigma(k)}^{\alpha,F,T}
 .
\end{align}
As described in \cite[Section~8]{AFWgeodecomp}, 
the extension operators are well-defined by setting 
\begin{align}
 \ext^{k,r}_{F,T}
 \lambda^{\alpha}_{F} \cartanlambda^{F}_{\sigma}
 =
 \lambda^{\tilde\alpha}_{T} \Psi_{\tilde\sigma}^{\tilde\alpha,F,T}
\end{align}
for all $\sigma \in \Sigma(1:k,0:m)$ and $\alpha \in A(r,m)$,
where $\tilde\sigma = \imath(F,T) \circ \sigma \in \Sigma(k,n)$, 
and where $\tilde\alpha \in A(r,n)$ is uniquely defined by requiring $\tilde\alpha \circ \imath(F,T) = \alpha$;
analogously to above, $\alpha$ is zero outside of $[\imath(F,T)]$.
This prescribes the extension operator 
over the spanning set $\calS\mathring\calP_r\Lambda^{k}(F)$ of the space $\mathring\calP_r\Lambda^{k}(F)$,
and it follows from \cite[Section~8]{AFWgeodecomp} that this defines a linear operator.

These operators are called \emph{extension} operators 
because they are right-inverses of the trace, 
\begin{align*}
 \trace_{T,F} \ext^{k,r,-}_{F,T} \omega &= \omega, 
 \quad 
 \omega \in \mathring\calP^{-}_{r}\Lambda^{k}(F),
 \\
 \trace_{T,F} \ext^{k,r}_{F,T} \omega &= \omega, 
 \quad 
 \omega \in \mathring\calP_{r}\Lambda^{k}(F).
\end{align*}
Moreover, whenever $G$ is another subsimplex of $T$,
then $F \subseteq G$ implies 
\begin{align*}
 \trace_{T,G} \ext^{k,r,-}_{F,T} \omega &= \ext^{k,r,-}_{F,G}, 
 \quad 
 \omega \in \mathring\calP^{-}_{r}\Lambda^{k}(F),
 \\
 \trace_{T,G} \ext^{k,r}_{F,T} \omega &= \ext^{k,r}_{F,G}, 
 \quad 
 \omega \in \mathring\calP_{r}\Lambda^{k}(F),
\end{align*}
whereas $F \nsubseteq G$ implies 
\begin{align*}
 \trace_{T,G} \ext^{k,r,-}_{F,T} \omega &= 0, 
 \quad 
 \omega \in \mathring\calP^{-}_{r}\Lambda^{k}(F),
 \\
 \trace_{T,G} \ext^{k,r}_{F,T} \omega &= 0, 
 \quad 
 \omega \in \mathring\calP_{r}\Lambda^{k}(F).
\end{align*}
We refer to prior publications \cite{AFWgeodecomp} for detailed discussion of these extension operators. 
The central result is the following decomposition.

\begin{theorem} \label{theorem:geodecomp}
    Let $T$ be an $n$-simplex and let $k, r \in \bbN_{0}$ with $r > 0$.
    Then 
    \begin{align*}
        \calP_{r}\Lambda^{k}(T)
        =
        \bigoplus_{F \subseteq T}
        \ext^{k,r}_{F,T} \mathring\calP_{r}\Lambda^{k}(F)
        ,
        \quad
        \calP_{r}^{-}\Lambda^{k}(T)
        =
        \bigoplus_{F \subseteq T}
        \ext^{k,r,-}_{F,T} \mathring\calP_{r}^{-}\Lambda^{k}(F)
        . 
    \end{align*}
\end{theorem}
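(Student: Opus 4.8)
The plan is to combine three facts: (i) each summand $\ext_{F,T}^{k,r}\mathring\calP_{r}\Lambda^{k}(F)$ is, by the very definition of the extension operators recalled above, a subspace of $\calP_{r}\Lambda^{k}(T)$, and likewise $\ext_{F,T}^{k,r,-}\mathring\calP_{r}^{-}\Lambda^{k}(F)\subseteq\calP_{r}^{-}\Lambda^{k}(T)$; (ii) the sum over all subsimplices $F\subseteq T$ is direct; and (iii) the dimensions add up to $\dim\calP_{r}\Lambda^{k}(T)$, respectively $\dim\calP_{r}^{-}\Lambda^{k}(T)$. Since $\trace_{T,F}\ext_{F,T}^{k,r}=\Id$, each extension operator is injective, so (i), (ii), and (iii) together force the direct sum to exhaust the whole space. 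Thus only (ii) and (iii) require work.

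For directness, suppose $\sum_{F\subseteq T}\ext_{F,T}^{k,r}\omega_{F}=0$ with $\omega_{F}\in\mathring\calP_{r}\Lambda^{k}(F)$ and not all $\omega_{F}$ zero, and pick $F_{0}$ of least dimension among the $F$ with $\omega_{F}\neq 0$. Apply $\trace_{T,F_{0}}$: by the trace identities recalled above, $\trace_{T,F_{0}}\ext_{F,T}^{k,r}\omega_{F}=0$ whenever $F\nsubseteq F_{0}$, while $F\subseteq F_{0}$ together with $\dim F\geq\dim F_{0}$ forces $F=F_{0}$. Hence the sum collapses to $\trace_{T,F_{0}}\ext_{F_{0},T}^{k,r}\omega_{F_{0}}=\omega_{F_{0}}$, so $\omega_{F_{0}}=0$, a contradiction. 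The identical argument handles the $\calP_{r}^{-}\Lambda^{k}$-family.

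For the dimension count it suffices to prove $\dim\calP_{r}\Lambda^{k}(T)=\sum_{F\subseteq T}\dim\mathring\calP_{r}\Lambda^{k}(F)$ and its $\calP^{-}$-counterpart, and I would read this off the explicit bases of Section~\ref{sec:preliminaries}. To a basis element $\lambda_{T}^{\alpha}\cartanlambda_{\sigma}^{T}\in\calB\calP_{r}\Lambda^{k}(T)$ associate the subsimplex $F$ whose vertex index set is $[\alpha]\cup[\sigma]$, and pull $\alpha$ and $\sigma$ back along $\imath(F,T)$ to obtain an element of $\calB\mathring\calP_{r}\Lambda^{k}(F)$; the conditions that $[\alpha]\cup[\sigma]$ span $F$ and that $\lfloor\alpha\rfloor\notin[\sigma]$ are preserved because $\imath(F,T)$ is order preserving. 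Since $F$ and the pulled-back indices are recovered from the image, this is a bijection onto the union $\bigcup_{F\subseteq T}\calB\mathring\calP_{r}\Lambda^{k}(F)$, which is disjoint because $F$ is determined by the image; for the $\calP^{-}$-family the analogous bijection uses $\phi$ in place of $\cartanlambda$ together with the observation that $\lfloor\alpha\rfloor\geq\lfloor\rho\rfloor$ makes the least vertex of $F$ lie in $[\rho]$, so the pulled-back $\rho$ has floor $0$. Combined with (i) and (ii), this yields the two claimed decompositions.

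I expect the only delicate point to be resisting the temptation to argue (iii) more directly, by claiming that the extension operator carries the basis $\calB\mathring\calP_{r}\Lambda^{k}(F)$ into $\calB\calP_{r}\Lambda^{k}(T)$. This does work in the $\calP_{r}^{-}\Lambda^{k}$-family, where $\ext_{F,T}^{k,r,-}$ literally sends $\lambda_{F}^{\alpha}\phi_{\rho}^{F}$ to the basis form $\lambda_{T}^{\tilde\alpha}\phi_{\tilde\rho}^{T}$, but it fails in the $\calP_{r}\Lambda^{k}$-family, where the image is $\lambda_{T}^{\tilde\alpha}\Psi_{\tilde\sigma}^{\tilde\alpha,F,T}$, a combination of several barycentric alternators rather than a single basis form. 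One could still force the direct argument through by verifying that the resulting change of basis is triangular, but routing through the dimension count avoids this entirely: it uses only the trace identities and the well-definedness of the extension operators, both already supplied by \cite{AFWgeodecomp} and the preceding discussion. In that sense the theorem is a short linear-algebra repackaging of those cited facts rather than a new computation.
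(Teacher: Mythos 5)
Your argument is correct, but it is worth noting that the paper itself does not prove this theorem at all: it is stated as a known result imported from Arnold--Falk--Winther \cite{AFWgeodecomp}, with the surrounding text only recalling the trace--extension identities. What you have written is essentially a self-contained reconstruction of the literature proof from the ingredients the paper supplies: injectivity of $\ext^{r,k}_{F,T}$ and $\ext^{r,k,-}_{F,T}$ from $\trace_{T,F}\ext_{F,T}=\Id$, directness by applying $\trace_{T,F_{0}}$ for $F_{0}$ of minimal dimension with nonzero component (using $\trace_{T,G}\ext_{F,T}=0$ for $F\nsubseteq G$), and the dimension count $\dim\calP_{r}\Lambda^{k}(T)=\sum_{F\subseteq T}\dim\mathring\calP_{r}\Lambda^{k}(F)$ via the explicit bijection between $\calB\calP_{r}\Lambda^{k}(T)$ and the disjoint union of the sets $\calB\mathring\calP_{r}\Lambda^{k}(F)$ (and likewise for the $\calP^{-}$ family, where the condition $\lfloor\alpha\rfloor\geq\lfloor\rho\rfloor$ indeed corresponds to $\lfloor\rho\rfloor=0$ after pulling back along the order-preserving $\imath(F,T)$). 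All steps check out; your observation that the direct "basis maps to basis" argument works for $\ext^{r,k,-}_{F,T}$ but not for $\ext^{r,k}_{F,T}$ (whose image involves the forms $\Psi^{\tilde\alpha,F,T}_{\tilde\sigma}$ rather than single alternators) is accurate and is precisely why the dimension-count route is the cleaner one. The only dependency you should flag explicitly is that the sets \eqref{math:canonicalbases} really are bases of the respective spaces for $r\geq 1$; this is asserted in Section~\ref{sec:preliminaries} with reference to \cite{licht2018basis}, so your proof is conditional on that cited fact in the same way the paper's citation to \cite{AFWgeodecomp} is, but it gains a concrete, checkable linear-algebra derivation that the paper leaves implicit.
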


\begin{remark}
 The geometric decomposition in Theorem~\ref{theorem:geodecomp} is fundamental to finite element theory
 and its importance can hardly be overstated: it is the geometric decomposition
 which enables the construction of localized bases.
 We refer to the literature \cite{AFWgeodecomp} for further background. 
\end{remark}

The above results on extension operators are known. 
Next we study how these extension operators interact with simplicial symmetries.

We introduce additional notation. 
Let $F \subseteq T$ be a subsimplex of a simplex $T$ and let $S \in \Sym(T)$.
Then $S_{} F$ is a subsimplex of $T$ of the same dimension as $F$,
and we have affine diffeomorphisms 
\begin{align}
 S_{\vert F}^{  } : F \rightarrow S_{} F,
 \quad 
 S_{\vert F}^{-1} : S_{} F \rightarrow F.
\end{align}

\begin{theorem} \label{theorem:extensionscommute}
    Let $k,r \in \bbN_{0}$ with $r > 0$.
    Let $T$ be a simplex of dimension $n$
    and 
    $F \subseteq T$ be a subsimplex of dimension $m$. 
    If $S \in \Sym(T)$,
    then  
    \begin{align} \label{math:extensionscommute}
        \ext^{k,r  }_{F,T} S_{\vert F}^{\ast} = S_{}^{\ast} \ext^{k,r  }_{SF,T},
        \quad 
        \ext^{k,r,-}_{F,T} S_{\vert F}^{\ast} = S_{}^{\ast} \ext^{k,r,-}_{SF,T}.
    \end{align}
\end{theorem}

\begin{proof}
    It suffices to prove both identities over the canonical spanning sets. 
    For every $S \in \Sym(T)$ and every subsimplex $F$ of $T$,
    we can decompose $S = S_{1} S_{2}$ for some $S_{1}, S_{2} \in \Sym(T)$ 
    where $S_{1|F} : F \rightarrow SF$ preserves the order of vertices and $S_{2}(F) = F$. 
    It suffices to consider simplicial symmetries $S$ belonging to one of the two special cases. 
    
    Let us suppose that $\pi \in \Perm(n)$ 
    such that $S_{\pi} \in \Sym(T)$ gives a mapping $S_{\pi|F} : F \rightarrow SF$
    that preserves the order of vertices. 
    We observe that $S_{\pi|F}^{\ast} \lambda^{SF}_{i} = \lambda^{F}_{i}$
    and thus $S_{\pi|F}^{\ast} \cartanlambda^{SF}_{i} = \cartanlambda^{F}_{i}$. 
    Hence, for any $\alpha \in A(r,n)$ and $\sigma \in \Sigma(k,n)$,
    \begin{align*}
     S_{\pi|F}^{\ast}
     \lambda_{SF}^{\alpha} \cartanlambda_{\sigma}^{SF}
     =
     \lambda_{ F}^{\alpha} \cartanlambda_{\sigma}^{ F}
     .
    \end{align*}
    We let $\alpha', \alpha'' \in A(r,n)$ be defined uniquely 
    by $\alpha' \circ \imath(SF,T) = \alpha$ and $\alpha'' \circ \imath( F,T) = \alpha$.
    We also abbreviate $\sigma' = \imath(SF,T) \circ \sigma$ and $\sigma'' = \imath( F,T) \circ \sigma$.
    Direct calculation verifies 
    \begin{align*}
     S_{\pi}^{\ast}
     \lambda_{T}^{\alpha'}
     \Psi_{\sigma'}^{\alpha',SF,T}
     =
     \lambda_{T}^{\alpha''}
     \Psi_{\sigma''}^{\alpha'', F,T}
     . 
    \end{align*}
    Similarly, 
    for any $\alpha \in A(r-1,n)$ and $\rho \in \Sigma_{0}(k,n)$, 
\begin{align*}
     S_{\pi|F}^{\ast}
     \lambda_{SF}^{\alpha} \whitney_{\rho}^{SF}
     =
     \lambda_{ F}^{\alpha} \whitney_{\rho}^{ F}
     .
    \end{align*}
    Letting $\alpha', \alpha'' \in A(r-1,n)$ be defined uniquely 
    by $\alpha' \circ \imath(SF,T) = \alpha$ and $\alpha'' \circ \imath( F,T) = \alpha$ 
    and abbreviating $\rho' = \imath(SF,T) \circ \rho$ and $\rho'' = \imath( F,T) \circ \rho$,
    we easily verify that 
    \begin{align*}
     S_{\pi}^{\ast}
     \lambda_{T}^{\alpha' } \whitney_{\rho' }^{T}
     =
     \lambda_{T}^{\alpha''} \whitney_{\rho''}^{T}
     . 
    \end{align*}
    This shows \eqref{math:extensionscommute} in the first special case. 
    
    We consider $S$ belonging to the second special case. 
    Let us suppose that $\pi \in \Perm(n)$ 
    such that $S_{\pi} \in \Sym(T)$ satisfies $S_{\pi}(F) = F$. 
    To approach the first identity, we prepare a few auxiliary results. 
    Let $\alpha \in A(r,n)$ and $\sigma \in \Sigma(k,n)$, 
    and let $i \in [0:n]$. 
    Since $S_\pi$ maps $F$ onto itself, 
    \begin{align*}
        S_{\pi}^{\ast} \Psi_{i}^{\alpha,F,T}
        &=
        S_{\pi}^{\ast} \cartanlambda^{T}_{i} - \lvert\alpha\rvert^{\inv} \alpha(i) S_{\pi}^{\ast} \sum_{ j \in [\imath(F,T)] } \cartanlambda^{T}_{j}
        \\&=
        \cartanlambda^{T}_{\pi(i)} - \lvert\alpha\rvert^{\inv} \alpha\pi^{\inv}(\pi(i)) \sum_{ j \in [\imath(F,T)] } \cartanlambda^{T}_{j}
        =
        \Psi_{\pi(i)}^{\alpha\pi^{\inv},F,T}
        .
    \end{align*}
    Letting $\widehat\sigma \in \Sigma(k,n)$ with $[\widehat\sigma] = [\pi\sigma]$, we find 
    \begin{align*}
        S_{\pi}^{\ast} \Psi_{\sigma}^{\alpha,F,T}
        &=
        S_{\pi}^{\ast} \Psi_{\sigma(1)}^{\alpha,F,T} \wedge \dots \wedge S_{\pi}^{\ast} \Psi_{\sigma(k)}^{\alpha,F,T}
        \\&=
        \Psi_{\pi\sigma(1)}^{\alpha\pi^{\inv},F,T} \wedge \dots \wedge \Psi_{\pi\sigma(k)}^{\alpha\pi^{\inv},F,T}
        \\&=
        \eps(\pi\sigma)
        \Psi_{\widehat\sigma(1)}^{\alpha\pi^{\inv},F,T} \wedge \dots \wedge \Psi_{\widehat\sigma(k)}^{\alpha\pi^{\inv},F,T}
        =
        \Psi_{\widehat\sigma}^{\alpha\pi^{\inv},F,T}
        .
    \end{align*}
With those preparations in place,
    let $\alpha \in A(r,m)$ and $\sigma \in \Sigma(k,m)$.
    Again, $\widehat\sigma \in \Sigma(k,m)$ with $[\widehat\sigma] = [\pi\sigma]$.
    Moreover, 
    we let $\tilde\sigma = \imath(F,T) \circ \sigma \in \Sigma(k,n)$
    and let $\tilde\alpha \in A(r-1,n)$ be defined by $\tilde\alpha \circ \imath(F,T) = \alpha$.
    We first verify that 
    \begin{gather*}
        S_{\pi}^{\ast} 
        \ext^{k,r}_{F,T}
        \lambda_{F}^{\alpha} \cartanlambda^{F}_{\sigma}
        = 
        S_{\pi}^{\ast} 
        \lambda_{T}^{ \tilde\alpha } \Psi^{\tilde\alpha,F,T}_{ \tilde\sigma }
        = 
        \lambda_{T}^{ \tilde\alpha \pi^{\inv} } 
        S_{\pi}^{\ast} 
        \Psi^{\tilde\alpha,F,T}_{ \tilde\sigma }
        \\ 
        \ext^{k,r}_{F,T} 
        S_{\pi|F}^{\ast} 
        \lambda_{F}^{\alpha} \cartanlambda^{F}_{\sigma}
        = 
        \eps(\pi\sigma)
        \ext^{k,r}_{F,T} 
        \lambda_{F}^{\alpha\pi^{\inv}} \cartanlambda^{F}_{\widehat\sigma}
        .
    \end{gather*}
    To show the first identity in \eqref{math:extensionscommute}, 
    we merely observe that $\alpha\pi^{\inv} = \tilde\alpha \pi^{\inv} \imath(F,T)$
    and that $[ \imath(F,T) \widehat\sigma ] = [\pi  \tilde\sigma ]$ with $\eps(\pi\sigma) = \eps(\pi\widetilde\sigma)$.
    The desired identity then follows from our auxiliary computations 
    and the definition of the extension operators.

    Lastly, we prove the second identity.
    Now let $\alpha \in A(r-1,m)$ and $\rho \in \Sigma_{0}(k,m)$.  
    Let $\widehat\rho \in \Sigma(k,m)$ with $[\widehat\rho] = [\pi\rho]$
    and $\tilde\rho = \imath(F,T) \circ \rho \in \Sigma_{0}(k,n)$.
    Similarly to above, 
    we let $\tilde\alpha \in A(r-1,n)$ be defined by $\tilde\alpha \circ \imath(F,T) = \alpha$. 
    We calculate 
    \begin{gather*}
        S_{\pi}^{\ast} 
        \ext^{k,r,-}_{F,T}
        \lambda_{F}^{\alpha} \whitney^{F}_{\rho}
        = 
        S_{\pi}^{\ast} 
        \lambda_{T}^{ \tilde\alpha } \whitney^{T}_{\tilde\rho}
        = 
        \lambda_{T}^{ \tilde\alpha \pi^{\inv} } 
        S_{\pi}^{\ast} 
        \whitney^{T}_{\tilde\rho}
        \\ 
        \ext^{k,r,-}_{F,T} 
        S_{\pi|F}^{\ast} 
        \lambda_{F}^{\alpha} \whitney^{F}_{\rho}
        = 
        \eps(\pi\rho)
        \ext^{k,r,-}_{F,T} 
        \lambda_{F}^{\alpha\pi^{\inv}} \whitney^{F}_{\widehat\rho}
        .
    \end{gather*}
    To show the second identity in \eqref{math:extensionscommute}, 
    we see $\alpha\pi^{\inv} = \tilde\alpha \pi^{\inv} \imath(F,T)$
    and $[ \imath(F,T) \widehat\rho ] = [\pi  \tilde\rho ]$ with $\eps(\pi\rho) = \eps(\pi\widetilde\rho)$.
    The desired identity follows via the definition of the extension operators.     
\end{proof}

We now work along the following idea: 
if a basis allows for a geometric decomposition corresponding to Theorem~\ref{theorem:geodecomp}, 
then this basis is $\bbF$-invariant under the provision that the components in the geometric decomposition  
satisfy certain invariance properties.
This is formalized in the following two theorems,
which strengthen Theorems~\ref{theorem:geometricdecomposition:sullivan}~and~\ref{theorem:geometricdecomposition:whitney}. 
An important consequence is this:
$\bbF$-invariant bases for the components in that geometric decomposition 
yield $\bbF$-invariant bases for the entire finite element space over the simplex. 

\begin{theorem} \label{theorem:geodecompinvariant:fullspace}
 Let $k, r \in \bbN_{0}$ with $r > 0$ and let $T$ be an $n$-simplex.
Assume that $\calQ\mathring\calP_{r}\Lambda^{k}(F)$ is a basis for $\mathring\calP_{r}\Lambda^{k}(F)$
 for each subsimplex $F \subseteq T$,
 and define  
 \begin{align*}
  \calQ\calP_r\Lambda^{k}(T)
  :=
  \bigcup_{ F \subseteq T } \ext^{k,r}_{F,T} \calQ\mathring\calP_r\Lambda^{k}(F)
  .
 \end{align*}
 Then $\calQ\calP_r\Lambda^{k}(T)$ is a basis of $\calP_r\Lambda^{k}(T)$.
 The following statements are equivalent:
 \begin{itemize}
  \item 
  $\calQ\calP_r\Lambda^{k}(T)$ is $\bbF$-invariant. 
  \item 
  For each subsimplex $F$ of $T$ 
  the set $\calQ\mathring\calP_r\Lambda^{k}(F)$ is $\bbF$-invariant 
  and for each $S \in \Sym(T)$ that preserves the relative order of vertices of $F$
  we have 
  \begin{align*}
   S^{\ast} \ext^{k,r}_{SF,T} \calQ\mathring\calP_r\Lambda^{k}(SF)
   =
   \ext^{k,r}_{F,T} \calQ\mathring\calP_r\Lambda^{k}(F)
   .
  \end{align*}
 \end{itemize}
\end{theorem}

\begin{proof}
 For any subsimplex $F \subseteq T$,
 since $\calQ\mathring\calP_r\Lambda^{k}(F)$ is a basis for $\mathring\calP_r\Lambda^{k}(F)$
 we see that 
 $\ext^{k,r}_{F,T} \calQ\mathring\calP_r\Lambda^{k}(F)$ is a basis for $\ext^{k,r}_{F,T} \mathring\calP_r\Lambda^{k}(F)$,
 via Theorem~\ref{theorem:extensionscommute}. 
 By Theorem~\ref{theorem:geodecomp} then, 
 it is clear that $\calQ\calP_{r}\Lambda^{k}(T)$ is a basis of $\calP_{r}\Lambda^{k}(T)$. 
 
 Assume that $\calQ\calP_{r}\Lambda^{k}(T)$ is $\bbF$-invariant. 
 Let $F \subseteq T$ be a subsimplex and let $S \in \Sym(F)$. 
 There exists $\widehat S \in \Sym(T)$ such that $S = \widehat S_{|F}$. 
 The pullback along $\widehat S$ preserves the space $\ext^{k,r}_{F,T} \mathring\calP_{r}\Lambda^{k}(F)$ since 
 Theorem~\ref{theorem:extensionscommute} implies 
 \begin{align*}
  \widehat S^{\ast} \ext^{k,r}_{F,T} \mathring\calP_{r}\Lambda^{k}(F) 
  =
  \ext^{k,r}_{F,T} S^{\ast} \mathring\calP_{r}\Lambda^{k}(F) 
  =
  \ext^{k,r}_{F,T} \mathring\calP_{r}\Lambda^{k}(F) 
  .
 \end{align*}
 Because $\calQ\calP_{r}\Lambda^{k}(T)$ is $\bbF$-invariant,
for every $\omega_{1} \in \calQ\mathring\calP_r\Lambda^{k}(F)$
 there exists $\omega_{2} \in \calQ\mathring\calP_r\Lambda^{k}(F)$
 and a complex unit $\chi \in \bbF$ such that 
 $\widehat S^{\ast} \ext^{k,r}_{F,T} \omega_{1} = \chi \ext^{k,r}_{F,T} \omega_{2}$.
 Using Theorem~\ref{theorem:extensionscommute} again, we note that 
 \begin{align*}
  S^{\ast} \omega_{1}
  = 
  \trace_{T,F} \ext^{k,r}_{F,T} S^{\ast} \omega_{1}
  = 
  \trace_{T,F} \widehat S^{\ast} \ext^{k,r}_{F,T} \omega_{1}
  = 
  \chi 
  \trace_{T,F} \ext^{k,r}_{F,T} \omega_{2}
  = 
  \chi 
  \omega_{2}
  .
 \end{align*}
 Hence, by definition, $\calQ\mathring\calP_r\Lambda^{k}(F)$ is $\bbF$-invariant. 
 
 Let $F \subseteq T$ be a subsimplex and let $S \in \Sym(T)$ 
 be such that the mappings
 \begin{align}
  S_{\vert F}^{  } : F \rightarrow S_{} F,
  \quad 
  S_{\vert F}^{-1} : S_{} F \rightarrow F
 \end{align}
 preserve the relative order of the vertices. 
 By Theorem~\ref{theorem:extensionscommute}, we have 
 \begin{align*}
  S^{\ast}
  \ext^{k,r}_{SF,T} \mathring\calP_{r}\Lambda^{k}(SF)
  =
  \ext^{k,r}_{ F,T} S_{\vert F}^{\ast} \mathring\calP_{r}\Lambda^{k}(SF)
  =
  \ext^{k,r}_{ F,T} \mathring\calP_{r}\Lambda^{k}(F)
.
 \end{align*}
 The same argument can be applied to the inverse of $S$;
 it follows that we have an isomorphism \begin{align*}
  S^{\ast} : 
  \ext^{k,r}_{SF,T} \mathring\calP_{r}\Lambda^{k}(SF)
  \rightarrow 
  \ext^{k,r}_{F,T} \mathring\calP_{r}\Lambda^{k}(F).
 \end{align*}
 As $\calQ\calP_{r}\Lambda^{k}(T)$ is $\bbF$-invariant,
 $S^{\ast}$ maps $\ext^{k,r}_{SF,T} \calQ\mathring\calP_{r}\Lambda^{k}(SF)$
 bijectively onto 
$\ext^{k,r}_{F,T} \calQ\mathring\calP_{r}\Lambda^{k}(F)$
 up to multiplication by units of $\bbF$. 
 But since $S$ preserves the relative ordering of the vertices of $F$,
 a direct calculation shows that those units must equal one. 
 Hence we have a bijection 
 \begin{align*}
  S^{\ast} : 
  \ext^{k,r}_{SF,T} \calQ\mathring\calP_{r}\Lambda^{k}(SF)
  \rightarrow 
  \ext^{k,r}_{F,T} \calQ\mathring\calP_{r}\Lambda^{k}(F).
 \end{align*}
 Thus, we have shown that the first statement of the theorem implies the second statement.

 It remains to show the converse implication,
 so let us assume that the second statement is true.
 Let $S \in \Sym(T)$.
 There exist $S_{1}, S_{2} \in \Sym(T)$ such that $S = S_{1} S_{2}$, 
 we have $S_{1}(F) = SF$ and $S_{2}(F) = F$, and $S_{1 \vert F} : F \rightarrow SF$ preserves the order of vertices.

 Let $\omega \in \calQ\calP_{r}\Lambda^{k}(T)$. 
 There exists a subsimplex $F$ of $T$ and $\omega_{0} \in \calQ\mathring\calP_{r}\Lambda^{k}(SF)$
 such that $\omega = \ext^{k,r}_{SF,T} \omega_{0}$. 
Note that by assumption, we have a bijection 
 \begin{align*}
  S_{1}^{\ast} : 
  \ext^{k,r}_{SF,T} \calQ\mathring\calP_{r}\Lambda^{k}(SF)
  \rightarrow 
  \ext^{k,r}_{F,T} \calQ\mathring\calP_{r}\Lambda^{k}(F).
 \end{align*}
 Hence,
there exists $\omega_{1} \in \calQ\mathring\calP_{r}\Lambda^{k}(F)$
 such that $S_{1}^{\ast} \ext^{k,r}_{SF,T} \omega_{0} = \ext^{k,r}_{F,T} \omega_{1}$. 
Furthermore, since $\calQ\mathring\calP_{r}\Lambda^{k}(F)$ is assumed to be $\bbF$-invariant, 
 there exist a complex unit $\chi \in \bbF$ 
 and $\omega_2 \in \calQ\mathring\calP_{r}\Lambda^{k}(F)$
 such that $S_{2 \vert F}^{\ast} \omega_{1} = \chi \omega_{2}$. 
 Thus 
 \begin{align*}
  S^{\ast} \omega 
  = 
S_{2}^{\ast} S_{1}^{\ast} \ext^{k,r}_{SF,T} \omega_{0}
  = 
  S_{2}^{\ast} \ext^{k,r}_{F,T} \omega_{1}
  = 
  \ext^{k,r}_{F,T} S_{2 \vert F}^{\ast} \omega_{1}
  = 
  \chi \ext^{k,r}_{F,T} \omega_{2}
  .
 \end{align*}
 As a consequence,
 $\calQ\calP_{r}\Lambda^{k}(T)$ is $\bbF$-invariant. 
\end{proof}

\begin{theorem} \label{theorem:geodecompinvariant:whitney}
 Let $k, r \in \bbN_{0}$ with $r > 0$
 and let $T$ be an $n$-simplex.
Assume that $\calQ\mathring\calP_{r}^{-}\Lambda^{k}(F)$ is a basis for $\mathring\calP_{r}^{-}\Lambda^{k}(F)$
 for each subsimplex $F \subseteq T$,
 and define  
 \begin{align*}
  \calQ\calP_r^{-}\Lambda^{k}(T)
  :=
  \bigcup_{ F \subseteq T } \ext^{k,r,-}_{F,T} \calQ\mathring\calP_r^{-}\Lambda^{k}(F)
  .
 \end{align*}
 Then $\calQ\calP_r^{-}\Lambda^{k}(T)$ is a basis of $\calP_r^{-}\Lambda^{k}(T)$.
 The following statements are equivalent:
 \begin{itemize}
  \item 
  $\calQ\calP_r^{-}\Lambda^{k}(T)$ is $\bbF$-invariant. 
  \item 
  For each subsimplex $F$ of $T$ 
  the set $\calQ\mathring\calP_r^{-}\Lambda^{k}(F)$ is $\bbF$-invariant 
  and for each $S \in \Sym(T)$ that preserves the relative order of vertices of $F$
  we have 
  \begin{align*}
   S^{\ast} \ext^{k,r,-}_{SF,T} \calQ\mathring\calP_r^{-}\Lambda^{k}(SF)
   =
   \ext^{k,r,-}_{F,T} \calQ\mathring\calP_r^{-}\Lambda^{k}(F)
   .
  \end{align*}
 \end{itemize}
\end{theorem}

\begin{proof}
 This is proven completely analogously to the preceding theorem.
\end{proof}

A geometrically decomposed basis enjoys $\bbF$-invariance 
if each component in the geometric decomposition is already $\bbF$-invariant
and if, additionally, the components associated to subsimplices of the same dimension
are ``reindexings'' of each other. 
That second additional condition is a mere technicality:
if we know an $\bbF$-invariant basis for the spaces of vanishing trace over, say, any $m$-dimensional reference simplex,
then pullback along vertex-order preserving affine diffeomorphisms immediately defines such components associated to every $m$-dimensional subsimplex.
Hence the only question of technical interest remaining for constructing geometrically decomposed $\bbF$-invariant bases
is whether $\bbF$-invariant bases are known for the corresponding spaces with vanishing boundary traces on all subsimplices.

\section{Recursive Basis Construction} \label{sec:recursion}

This final section describes the recursive construction of geometrically decomposed bases 
for simplicial higher-degree finite element spaces  
in finite element exterior calculus. 
We combine the results of the previous sections.
\\

We commence the construction as follows. 
First, 
for every simplex $T$ we fix a basis $\calA\calP_{0}\Lambda^{k}(T)$
for the lowest-degree space $\calP_{0}\Lambda^{k}(T)$.
In principle, any arbitrary choice of bases can serve as the base case in our recursive construction.
But some specific choices of bases, given further below, lead to $\bbF$-invariant higher-degree bases. 

Moreover, recall that the empty set is a basis for the trivial vector space. 
To simplify some technical arguments, 
we therefore fix the empty bases whenever the corresponding vector space is trivial. 
Specifically, 
we let $\calA\calP_{r}\Lambda^{k}(T) = \emptyset$ when $r < 0$
and    $\calA\calP_{r}^{-}\Lambda^{k}(T) = \emptyset$ when $r \leq 0$.
Similarly, we set
$\calA\mathring\calP_{r}^{-}\Lambda^{k}(T) = \emptyset$ when $r < n-k+1$,
and we set 
$\calA\mathring\calP_{r}^{ }\Lambda^{k}(T) = \emptyset$ either when $k < n$ and $r < n-k+1$ or when $k=n$ and $r < 0$. 
\\

Second, 
recall the canonical isomorphisms of Section~\ref{sec:canonicalisomorphism},
\begin{gather*}
 \calI_{k,r} : \calP_{r}\Lambda^{k}(T) \rightarrow \mathring\calP^{-}_{r+k+1}\Lambda^{n-k}(T)
 ,
 \quad 
 \calJ_{k,r} : \calP_{r+1}^{-}\Lambda^{k}(T) \rightarrow \mathring\calP_{r+k+1}\Lambda^{n-k}(T)
 .
\end{gather*}
These isomorphisms are defined for $r \geq 0$ and map bases to bases. 
Shifting indices, we construct bases for the finite element spaces with homogeneous boundary traces as follows. When $r \geq n-k$, we set 
\begin{gather*}
 \calA\mathring\calP^-_{r+1}\Lambda^{k}(T)
 :=
 \calI_{n-k,r-n+k}
 \calA\calP_{r-n+k}\Lambda^{n-k}(T), 
 \\
 \calA\mathring\calP_{r+1}\Lambda^{k}(T)
 :=
 \calJ_{n-k,r-n+k}
 \calA\calP^{-}_{r-n+k+1}\Lambda^{n-k}(T)
 ,
\end{gather*}
provided that the bases $\calA\calP_{r-n+k}\Lambda^{n-k}(T)$
and $\calA\calP^{-}_{r-n+k+1}\Lambda^{n-k}(T)$
have already been constructed. 
As seen in Corollary~\ref{corollary:isomorphisminvariantbases}, 
the bases $\calA\mathring\calP^{-}_{r+1}\Lambda^{k}(T)$ and $\calA\mathring\calP_{r+1}\Lambda^{k}(T)$
are $\bbF$-invariant 
if and only if 
the bases $\calA\calP_{r-n+k}\Lambda^{n-k}(T)$ and $\calA\calP^{-}_{r-n+k+1}\Lambda^{n-k}(T)$
are $\bbF$-invariant,
respectively.
\\

Third, 
taking into account the geometric decompositions 
\begin{gather*}
 \calP_{r}\Lambda^{k}(T)
 =
 \bigoplus_{F \subseteq T}
 \ext^{k,r}_{F,T} \mathring\calP_{r}\Lambda^{k}(F)
 ,
 \\
 \calP_{r}^{-}\Lambda^{k}(T)
 =
 \bigoplus_{F \subseteq T}
 \ext^{k,r,-}_{F,T} \mathring\calP^{-}_{r}\Lambda^{k}(F)
 , 
\end{gather*}
we define bases  
\begin{subequations}
\begin{gather}
 \calA\calP_{r}\Lambda^{k}(T)
 :=
 \bigcup_{F \subseteq T}
 \ext^{k,r}_{F,T} \calA\mathring\calP_{r}\Lambda^{k}(F)
 ,
 \\
 \calA\calP_{r}^{-}\Lambda^{k}(T)
 :=
 \bigcup_{F \subseteq T}
 \ext^{k,r,-}_{F,T} \calA\mathring\calP^{-}_{r}\Lambda^{k}(F)
 . 
\end{gather}
\end{subequations}
provided that the corresponding bases for the finite element spaces 
with boundary conditions have already been constructed. 

The basis $\calA\calP_{r}\Lambda^{k}(T)$ is $\bbF$-invariant
if the bases $\calA\mathring\calP_{r}\Lambda^{k}(F)$ for every $F$
satisfy the conditions of Theorem~\ref{theorem:geodecompinvariant:fullspace}.
Analogously, 
the basis $\calA\calP_{r}^{-}\Lambda^{k}(T)$ is $\bbF$-invariant
if the bases $\calA\mathring\calP^{-}_{r}\Lambda^{k}(F)$ for every $F$
satisfy the conditions of Theorem~\ref{theorem:geodecompinvariant:whitney}.
In both cases, $\bbF$-invariant bases on the lower-dimensional subsimplices are easily obtained 
if we know $\bbF$-invariant bases on any simplices of the corresponding dimensions.
\\

It is now clear how to recursively construct bases 
for finite element spaces of arbitrary polynomial degree:
starting from any given bases for constant differential forms, 
we construct bases of higher polynomial degree, 
using the canonical isomorphisms or the geometric decomposition. 
In particular, 
if the initial zero-degree finite element bases are $\bbF$-invariant,
then the recursively constructed higher-degree finite element bases 
will be $\bbF$-invariant as well. 

We develop this idea when the simplex $T$ has practically relevant low dimension. 
As explained above, 
our construction of higher-degree $\bbF$-invariant bases relies on 
$\bbF$-invariant bases for the zero-degree spaces,
which constitute the base case of the recursion. 
For the remainder of this section,
we therefore make the following additional assumptions:  
\begin{itemize}
 \item 
 On any simplex $T$, 
 the basis $\calA\calP_{0}\Lambda^{0}(T)$
 consists of the constant function with pointwise value $1$. 
 \item 
 On any $n$-dimensional simplex $T$,
 the bases $\calA\calP_{0}\Lambda^{n}(T)$ and $\calA\mathring\calP_{0}\Lambda^{n}(T)$
 consist of the volume form $\vol_T$. 
 \item 
 If $T$ is a triangle,
 then $\calA\calP_{0}\Lambda^{1}(T)$
 is the $\bbC$-invariant basis of Lemma~\ref{lemma:invariantconstant:1forms2D}. 
 \item 
 If $T$ is a tetrahedron,
 then $\calA\calP_{0}\Lambda^{1}(T)$ and $\calA\calP_{0}\Lambda^{2}(T)$
 are the $\bbR$-invariant bases of Lemmas~\ref{lemma:invariantconstant:1forms3D}~and~\ref{lemma:invariantconstant:2forms3D}.
 \item 
 If $T$ is a $4$-simplex, 
 then $\calA\calP_{0}\Lambda^{2}(T)$
 is the $\bbC$-invariant basis of Lemma~\ref{lemma:invariantconstant:2forms4D}.
\end{itemize}
This stipulation leads to the following results.
We begin with finite element bases for higher-degree scalar and volume forms to illustrate the basic ideas.

\begin{theorem} \label{theorem:recursiveconstruction:scalarvolume}
 Let $T$ be an $n$-dimensional simplex, and let $r \in \bbN_{0}$. Then the bases 
 \begin{gather*}
   \calA\calP_{r}\Lambda^{0}(T),
   \quad 
   \calA\calP_{r}\Lambda^{n}(T),
   \quad 
   \calA\mathring\calP_{r}\Lambda^{0}(T),
   \quad
   \calA\mathring\calP_{r}\Lambda^{n}(T),
   \\
   \calA\calP_{r}^{-}\Lambda^{0}(T),
   \quad
   \calA\calP_{r}^{-}\Lambda^{n}(T), 
   \quad
   \calA\mathring\calP_{r}^{-}\Lambda^{0}(T),
   \quad
   \calA\mathring\calP_{r}^{-}\Lambda^{n}(T)
 \end{gather*}
 are $\bbR$-invariant. 
\end{theorem}

\begin{proof}
  We perform nested induction over the dimension $n$ and the polynomial degree $r$.
  The base case are simplices of dimension $n = 0$, that is, vertices. 
  
  So suppose that $n=0$. The statement holds when $r=0$,
  where the bases $\calA\calP_{0}\Lambda^{0}(T)$ and $\calA\mathring\calP_{0}\Lambda^{0}(T)$ are $\bbR$-invariant,
  and the sets $\calA\calP^{-}_{0}\Lambda^{0}(T)$ and $\calA\mathring\calP^{-}_{0}\Lambda^{0}(T)$ are empty.
Suppose the statement is true for some polynomial degree $r \geq 0$. 
  Note that $\calA\calP_{r}\Lambda^{0}(T)$ cannot be empty. 
  The first canonical isomorphism maps $\calA\calP_{r}\Lambda^{0}(T)$ to $\calA\mathring\calP^{-}_{r+1}\Lambda^{0}(T)$,
  and the geometric decomposition trivially leads to $\calA\calP^{-}_{r+1}\Lambda^{0}(T)$.
  The second canonical isomorphism maps $\calA\calP^{-}_{r+1}\Lambda^{0}(T)$ to $\calA\mathring\calP_{r+1}\Lambda^{0}(T)$,
  and the geometric decomposition now leads to $\calA\calP_{r+1}\Lambda^{0}(T)$.
By construction, the statement thus holds for the polynomial degree $r+1$. 
  The principle of induction implies the statement for any polynomial degree when $n=0$.

  Suppose that $T$ is an arbitrary but fixed simplex of dimension $n$
  and that the statement is true for all polynomial degrees on simplices up to dimension $n-1$.
  We then use induction over the polynomial degree $r$. 
  
We first consider the case $r=0$.
  We have $\bbR$-invariant bases $\calA\calP_{0}\Lambda^{0}(T)$ and $\calA\calP_{0}\Lambda^{n}(T)$.
  The set $\calA\mathring\calP_{0}\Lambda^{0}(T)$ is empty
  and the basis $\calA\mathring\calP_{0}\Lambda^{n}(T)$ is $\bbR$-invariant. 
  The sets $\calA\calP^{-}_{0}\Lambda^{0}(T)$, $\calA\calP^{-}_{0}\Lambda^{n}(T)$, 
  $\calA\mathring\calP^{-}_{0}\Lambda^{n}(T)$ and $\calA\mathring\calP^{-}_{0}\Lambda^{n}(T)$ are empty.
  
Next, 
  we assume the statement holds up to polynomial degree $r-1 \geq 0$ and prove it for degree $r$.
Using that $r \geq 1$, the first canonical isomorphism maps $\calA\calP^{ }_{r  -1}\Lambda^{0}(T)$ onto $\calA\mathring\calP^{-}_{r}\Lambda^{n}(T)$.
  We immediately get the basis $\calA\calP^{-}_{r}\Lambda^{n}(T)$ from the geometric decomposition.
When $r \leq n$, then $\calA\mathring\calP^{-}_{r}\Lambda^{0}(T)$ is empty,
  and otherwise, the first canonical isomorphism maps $\calA\calP^{ }_{r-n-1}\Lambda^{n}(T)$ onto $\calA\mathring\calP^{-}_{r}\Lambda^{0}(T)$.
  The geometric decomposition provides the basis $\calA\calP^{-}_{r}\Lambda^{0}(T)$.
Using that $r \geq 1$, the second canonical isomorphism maps $\calA\calP^{-}_{r    }\Lambda^{0}(T)$ onto $\calA\mathring\calP^{ }_{r}\Lambda^{n}(T)$.
  We immediately get the basis $\calA\calP^{ }_{r}\Lambda^{n}(T)$.
When $r \leq n$, then $\calA\mathring\calP^{-}_{r}\Lambda^{0}(T)$ is empty,
  and otherwise, the second canonical isomorphism maps $\calA\calP^{-}_{r-n  }\Lambda^{n}(T)$ onto $\calA\mathring\calP^{ }_{r}\Lambda^{0}(T)$.
  The geometric decomposition provides the basis $\calA\calP^{ }_{r}\Lambda^{0}(T)$.
  
  We can now apply the principle of induction over the polynomial degree $r$.
Lastly, we have completed the induction over the dimension of the simplex. 
  We conclude that the statement holds for all polynomial degrees $r$ and over simplices $T$ of any dimension $n$. 
\end{proof}

Similar techniques lead to the main results of this section, 
which now follow.

\begin{theorem} \label{theorem:recursiveconstruction:2D}
 Let $T$ be a triangle, and let $r \in \bbN_{0}$. Then the bases 
 \begin{align*}
  \calA\calP_{r}\Lambda^{1}(T), \quad \calA\calP_{r}^{-}\Lambda^{1}(T), 
  \quad 
  \calA\mathring\calP_{r}\Lambda^{1}(T), \quad \calA\mathring\calP_{r}^{-}\Lambda^{1}(T)
 \end{align*}
 are $\bbC$-invariant. Furthermore:
\begin{itemize}
\item The basis $\calA\calP_{r}\Lambda^{1}(T)$ is $\bbR$-invariant if and only if $r \notin 3 \bbN_{0}$.
  \item The basis $\calA\calP_{r}^{-}\Lambda^{1}(T)$ is $\bbR$-invariant if and only if $r \notin 3 \bbN_{0} + 2$. 
\item The basis $\calA\mathring\calP_{r}\Lambda^{1}(T)$ is $\bbR$-invariant if and only if $r \notin 3 \bbN_{0} + 3$.
  \item The basis $\calA\mathring\calP_{r}^{-}\Lambda^{1}(T)$ is $\bbR$-invariant if and only if $r \notin 3 \bbN_{0} + 2$. 
\end{itemize}
\end{theorem}

\begin{proof}
Let $E_{0}, E_{1}, E_{2}$ denote the edges of $T$. 
 When $r \geq 1$, the geometric decomposition establishes
 \begin{align*}
  \calA\calP_{r}\Lambda^{1}(T)
  &= 
  \calA\mathring\calP_{r}\Lambda^{1}(T)
  \cup
  \bigcup_{i=0}^{2}
  \ext^{k,r}_{E_{i},T}
  \calA\mathring\calP_{r}\Lambda^{1}(E_{i})
  , 
  \\
  \calA\calP_{r}^{-}\Lambda^{1}(T)
  &= 
  \calA\mathring\calP_{r}^{-}\Lambda^{1}(T)
  \cup
  \bigcup_{i=0}^{2}
  \ext^{k,r,-}_{E_{i},T}
  \calA\mathring\calP_{r}^{-}\Lambda^{1}(E_{i})
  .
 \end{align*}
 We have seen that the bases 
 $\calA\mathring\calP_{r}\Lambda^{1}(E_{i})$ and $\calA\mathring\calP_{r}^{-}\Lambda^{1}(E_{i})$
 are $\bbR$-invariant, and hence $\bbC$-invariant. 
 Moreover, the canonical isomorphisms lead to \begin{align*}
  \calA\mathring\calP_{r+2}^{-}\Lambda^{1}(T)
  =
  \calI_{1,r}
  \calA\calP_{r}\Lambda^{1}(T)
  ,
  \quad 
  \calA\mathring\calP_{r+2}\Lambda^{1}(T)
  =
  \calJ_{1,r}
  \calA\calP_{r+1}^{-}\Lambda^{1}(T)
  .
 \end{align*}
 We recall from Lemma~\ref{lemma:invariantconstant:1forms2D} that the basis $\calA\calP_{0}\Lambda^{1}(T)$ is $\bbC$-invariant but not $\bbR$-invariant. 
 
 We prove the theorem using induction over the polynomial degree, 
 beginning with the polynomial degrees $r \leq 3$.
The basis $\calA\calP_{0}\Lambda^{1}(T)$ is $\bbC$-invariant but not $\bbR$-invariant,
 and the bases $\calA\calP^{-}_{0}\Lambda^{1}(T)$, $\calA\mathring\calP_{0}\Lambda^{1}(T)$, and $\calA\mathring\calP^{-}_{0}\Lambda^{1}(T)$ are empty. 
Via the geometric decompositions, the bases $\calA\calP_{1}\Lambda^{1}(T)$ and $\calA\calP_{1}^{-}\Lambda^{1}(T)$ are $\bbR$-invariant,
 and the bases $\calA\mathring\calP_{1}\Lambda^{1}(T)$ and $\calA\mathring\calP_{1}^{-}\Lambda^{1}(T)$ are empty. 
Via the canonical isomorphisms and the geometric decompositions,
 the bases $\calA\mathring\calP_{2}\Lambda^{1}(T)$ and $\calA\calP_{2}\Lambda^{1}(T)$ are $\bbR$-invariant,
 while the bases $\calA\mathring\calP^{-}_{2}\Lambda^{1}(T)$ and $\calA\calP^{-}_{2}\Lambda^{1}(T)$ are $\bbC$-invariant but not $\bbR$-invariant. 
Via the canonical isomorphisms and the geometric decompositions again,
 $\calA\mathring\calP_{3}\Lambda^{1}(T)$ and $\calA\calP_{3}\Lambda^{1}(T)$ are $\bbC$-invariant but not $\bbR$-invariant,
 while the bases $\calA\mathring\calP^{-}_{3}\Lambda^{1}(T)$ and $\calA\calP^{-}_{3}\Lambda^{1}(T)$ are $\bbR$-invariant. 
This establishes the statement for $r \leq 3$.

 Next, we assume that $r \in \bbN$ with $r > 3$ and that the claim is true for polynomial degrees $s < r$. 
 Again, we use the recursive construction.
 All the following bases are $\bbC$-invariant by construction, and we only need to establish whether they are $\bbR$-invariant. 
On the one hand,
 $\calA\calP_{r}^{-}\Lambda^{1}(T)$ is $\bbR$-invariant
 if and only if 
 $\calA\mathring\calP_{r}^{-}\Lambda^{1}(T)$ is $\bbR$-invariant,
 which is the case if and only if 
 $\calA\calP_{r-2}\Lambda^{1}(T)$ is $\bbR$-invariant.
 By the induction assumption, 
 this is the case if and only if $r-2 \notin 3\bbN_{0}$, that is, $r \notin 3 \bbN_{0} + 2$.
On the other hand,
 $\calA\calP_{r}\Lambda^{1}(T)$ is $\bbR$-invariant
 if and only if 
 $\calA\mathring\calP_{r}\Lambda^{1}(T)$ is $\bbR$-invariant,
 which is the case if and only if 
 $\calA\calP^{-}_{r-1}\Lambda^{1}(T)$ is $\bbR$-invariant.
 By the induction assumption, 
 this is the case if and only if $r-1 \notin 3\bbN_{0}+2$, that is, $r \notin 3 \bbN_{0}+3$.
This completes the induction argument and the proof.
\end{proof}

\begin{remark}
 The basic idea of the above proof is that the bases 
 $\calA\calP_{r}\Lambda^{k}(T)$ and $\calA\calP_{r}^{-}\Lambda^{k}(T)$
 are not only $\bbC$-invariant but also $\bbR$-invariant,
 unless the recursive construction leads back to the basis $\calA\calP_{0}\Lambda^{1}(T)$, 
 which is not $\bbR$-invariant. 
 Loosely speaking,
 we trace the contributions of $\calA\calP_{0}\Lambda^{1}(T)$ throughout the recursion.
\end{remark}

\begin{example}
 Let us restate this result in the language of vector analysis:
 On any triangle $T$ and any polynomial degree, 
 the Raviart-Thomas space $\RT_{r}(T)$ and the Brezzi-Douglas-Marini space $\BDM_{r}(T)$
 have geometrically decomposed $\bbC$-invariant bases. 
 In addition to that,
 that basis of $\BDM_{r}(T)$ is $\bbR$-invariant if $r \notin 3 \bbN_{0}$,
 and 
 that basis of $\RT_{r}(T)$ is $\bbR$-invariant if $r \notin 3 \bbN_{0} + 2$.
\end{example}

Next we study the finite element spaces over a tetrahedron.
The proof follows similar ideas but the result is quite different:
only for finitely many polynomial degrees the bases are $\bbR$-invariant.

\begin{theorem} \label{theorem:recursiveconstruction:3D}
 Let $T$ be a tetrahedron, and let $r \in \bbN_{0}$. 
 Then the bases 
 \begin{gather*}
  \calA\calP_{r}\Lambda^{1}(T), \quad \calA\calP_{r}^{-}\Lambda^{1}(T),
   \quad 
   \calA\calP_{r}\Lambda^{2}(T), \quad \calA\calP_{r}^{-}\Lambda^{2}(T), 
   \\
   \calA\mathring\calP_{r}\Lambda^{1}(T), \quad \calA\mathring\calP_{r}^{-}\Lambda^{1}(T),
   \quad 
   \calA\mathring\calP_{r}\Lambda^{2}(T), \quad \calA\mathring\calP_{r}^{-}\Lambda^{2}(T)
 \end{gather*}
 are $\bbC$-invariant. Furthermore: 
 \begin{itemize}
\item The basis $\calA\calP_{r}    \Lambda^{1}(T)$ is $\bbR$-invariant if and only if          $r \in \{0,1,2,4,5,8\}$.   
  \item The basis $\calA\calP_{r}^{-}\Lambda^{1}(T)$ is $\bbR$-invariant if and only if          $r \in \{0,1,3,4,7\}$.        
  \item The basis $\calA\calP_{r}    \Lambda^{2}(T)$ is $\bbR$-invariant if and only if          $r \in \{0,1,2,4,5,8\}$.  
  \item The basis $\calA\calP_{r}^{-}\Lambda^{2}(T)$ is $\bbR$-invariant if and only if          $r \in \{0,1,2,3,4,6,7,10\}$.
  \item The basis $\calA\mathring\calP_{r}    \Lambda^{1}(T)$ is $\bbR$-invariant if and only if $r \in \{0,1,2,3,4,5,6,8,9,12\}$.
  \item The basis $\calA\mathring\calP_{r}^{-}\Lambda^{1}(T)$ is $\bbR$-invariant if and only if $r \in \{0,1,2,3,4,5,7,8,11\}$. 
  \item The basis $\calA\mathring\calP_{r}    \Lambda^{2}(T)$ is $\bbR$-invariant if and only if $r \in \{0,1,2,4,5,8\}$.  
  \item The basis $\calA\mathring\calP_{r}^{-}\Lambda^{2}(T)$ is $\bbR$-invariant if and only if $r \in \{0,1,2,3,4,6,7,10\}$.
 \end{itemize}
\end{theorem}

\begin{proof}
 Let $E_{0}, E_{1}, \dots, E_{5}$ be the edges of $T$
 and let $F_{0}, F_{1}, F_{2}, F_{3}$ be the faces of $T$.
 When $r \geq 1$, the geometric decomposition establishes 
 \begin{align*}
  \calA\calP_{r}\Lambda^{1}(T)
  &= 
  \calA\mathring\calP_{r}\Lambda^{1}(T)
  \cup 
  \bigcup_{i=0}^{3}
  \ext^{k,r}_{F_{i},T}
  \calA\mathring\calP_{r}\Lambda^{1}(F_{i})
  \cup 
  \bigcup_{i=0}^{5}
  \ext^{k,r}_{E_{i},T}
  \calA\mathring\calP_{r}\Lambda^{1}(E_{i})
  ,\\
\calA\calP_{r}^{-}\Lambda^{1}(T)
  &= 
  \calA\mathring\calP_{r}^{-}\Lambda^{1}(T)
  \cup 
  \bigcup_{i=0}^{3}
  \ext^{k,r,-}_{F_{i},T}
  \calA\mathring\calP_{r}^{-}\Lambda^{1}(F_{i})
  \cup 
  \bigcup_{i=0}^{5}
  \ext^{k,r,-}_{E_{i},T}
  \calA\mathring\calP_{r}^{-}\Lambda^{1}(E_{i})
  ,\\
\calA\calP_{r}\Lambda^{2}(T)
  &= 
  \calA\mathring\calP_{r}\Lambda^{2}(T)
  \cup 
  \bigcup_{i=0}^{3}
  \ext^{k,r}_{F_{i},T}
  \calA\mathring\calP_{r}\Lambda^{2}(F_{i})
  ,\\
\calA\calP_{r}^{-}\Lambda^{2}(T)
  &= 
  \calA\mathring\calP_{r}^{-}\Lambda^{2}(T)
  \cup 
  \bigcup_{i=0}^{3}
  \ext^{k,r,-}_{F_{i},T}
  \calA\mathring\calP_{r}^{-}\Lambda^{2}(F_{i})
  .
 \end{align*}
 By Theorem~\ref{theorem:recursiveconstruction:scalarvolume}, 
 the bases for $1$-forms over edges and $2$-forms over faces are $\bbR$-invariant and thus $\bbC$-invariant. 
 By Theorem~\ref{theorem:recursiveconstruction:2D}, 
 the bases for $1$-forms associated to the faces are $\bbC$-invariant.
 By the canonical isomorphisms, 
\begin{gather*}
  \calA\mathring\calP_{r+3}\Lambda^{1}(T) = \calJ_{2,r} \calA\calP_{r+1}^{-}\Lambda^{2}(T),
  \quad  
  \calA\mathring\calP_{r+3}^{-}\Lambda^{1}(T) = \calI_{2,r} \calA\calP_{r}\Lambda^{2}(T),
  \\
  \calA\mathring\calP_{r+2}\Lambda^{2}(T) = \calJ_{1,r} \calA\calP_{r+1}^{-}\Lambda^{1}(T),
  \quad  
  \calA\mathring\calP_{r+2}^{-}\Lambda^{2}(T) = \calI_{1,r} \calA\calP_{r}\Lambda^{1}(T).
 \end{gather*}
Moreover, 
 from Lemma~\ref{lemma:invariantconstant:1forms3D}
 and Lemma~\ref{lemma:invariantconstant:2forms3D}
 we know that the bases $\calA\calP_{0}\Lambda^{1}(T)$ and $\calA\calP_{0}\Lambda^{2}(T)$ are $\bbR$-invariant and thus $\bbC$-invariant.

 The recursive construction produces $\bbC$-invariant bases.
 This is seen with a short induction argument over the polynomial degree, analogous to preceding proofs. 
When $r=0$, then the bases $\calA\calP_{0}\Lambda^{1}(T)$ and $\calA\calP_{0}\Lambda^{2}(T)$ are $\bbR$-invariant 
 and the other bases are empty.
Suppose that $r > 0$ and that we have constructed $\bbC$-invariant bases for all polynomial degrees strictly less than $r$.
 Each of the bases $\calA\mathring\calP_{r}\Lambda^{1}(T)$, $\calA\mathring\calP_{r}^{-}\Lambda^{1}(T)$, $\calA\mathring\calP_{r}\Lambda^{2}(T)$ and $\calA\mathring\calP_{r}^{-}\Lambda^{2}(T)$ is either empty 
 or found via the canonical isomorphisms from a $\bbC$-invariant basis of lesser polynomial degree.
 Next, $\calA\calP_{r}\Lambda^{1}(T)$, $\calA\calP_{r}^{-}\Lambda^{1}(T)$, $\calA\calP_{r}\Lambda^{2}(T)$ and $\calA\calP_{r}^{-}\Lambda^{2}(T)$ 
 are defined via the geometric decomposition.
 Thus, the $\bbC$-invariance of the bases is clear.

 It remains to identify which of these bases are $\bbR$-invariant. 
 Each of these bases is $\bbR$-invariant
 if and only if 
 no stage of its recursive construction involves any non-$\bbR$-invariant basis associated to $1$-forms over a face.

 Therefore, we make use of Theorem~\ref{theorem:recursiveconstruction:2D}. 
 For any face $F$ of $T$, the bases 
 $\calA\mathring\calP_{r}\Lambda^{1}(F)$ and $\calA\mathring\calP_{r}^{-}\Lambda^{1}(F)$
 are not $\bbR$-invariant 
 if and only if 
 $r \in 3 \bbN_{0} + 3$ and $r \in 3 \bbN_{0} + 2$,
 respectively. 
 Put differently, 
 this is the case precisely for the bases 
 $\calA\mathring\calP_{3 j + 3}\Lambda^{1}(F)$ and $\calA\mathring\calP_{3 j + 2}^{-}\Lambda^{1}(F)$
 with $j \in \bbN_{0}$.
We follow which bases they enter throughout the recursive construction.

 To this end, we let $j \in \bbN_{0}$ be arbitrary.
We immediately see that the bases 
 $\calA\calP_{3 j + 3}\Lambda^{1}(T)$ and $\calA\calP_{3 j + 2}^{-}\Lambda^{1}(T)$
are not $\bbR$-invariant. 
 Applying the canonical isomorphisms, 
 we see that  
 $\calA\mathring\calP_{3 j + 5}^{-}\Lambda^{2}(T)$ and $\calA\mathring\calP_{3 j + 3}\Lambda^{2}(T)$
 are not $\bbR$-invariant. 
Thus,
 $\calA\calP_{3 j + 5}^{-}\Lambda^{2}(T)$ and $\calA\calP_{3 j + 3}\Lambda^{2}(T)$
 are not $\bbR$-invariant. 
Applying the canonical isomorphisms once more,
 we get that 
 $\calA\mathring\calP_{3 j + 7}\Lambda^{1}(T)$ and $\calA\mathring\calP^{-}_{3 j + 6}\Lambda^{1}(T)$
 are not $\bbR$-invariant. 
Iterating this argument, 
 we find that the following bases are not $\bbR$-invariant:
 \begin{align*}
  \calA\calP_{3 j + 3 + 4b }\Lambda^{1}(T), \quad \calA\calP_{3 j + 2 + 4b }^{-}\Lambda^{1}(T),
  \\
  \calA\mathring\calP_{3 j + 5 + 4b }^{-}\Lambda^{2}(T), \quad \calA\mathring\calP_{3 j + 3 + 4b }\Lambda^{2}(T),
  \\
  \calA\calP_{3 j + 5 + 4b }^{-}\Lambda^{2}(T), \quad \calA\calP_{3 j + 3 + 4b }\Lambda^{2}(T),
  \\
  \calA\mathring\calP_{3 j + 7 + 4b }\Lambda^{1}(T), \quad \calA\mathring\calP_{3 j + 6 + 4b }^{-}\Lambda^{1}(T),
\end{align*}
 where $j, b \in \bbN_{0}$. 
 However, all non-negative integers except $1$, $2$, and $5$ are in the set $3\bbN_{0} + 4\bbN_{0}$. 
Thus the theorem follows. 
\end{proof}

\begin{remark}
 Let us restate this result in the language of vector analysis:
 On any tetrahedron $T$ 
 and for any polynomial degree $r$, 
 the Raviart-Thomas space $\RT_{r}(T)$, the Brezzi-Douglas-Marini space $\BDM_{r}(T)$
 and the N\'ed\'elec spaces of the first kind $\Ned_{r}^{\rm fst}(T)$ and the second kind $\Ned_{r}^{\rm snd}(T)$
 have geometrically decomposed $\bbC$-invariant bases. 
 Moreover, 
 \begin{itemize}
  \item that basis of $\RT_{r}(T)$ is $\bbR$-invariant if $r \in \{ 0, 1, 2, 3, 4, 6, 7, 10 \}$,
  \item that basis of $\BDM_{r}(T)$ is $\bbR$-invariant if $r \in \{ 0, 1, 2, 4, 5, 8 \}$,
  \item that basis of $\Ned_{r}^{\rm fst}(T)$ is $\bbR$-invariant if $r \in \{ 0, 1, 3, 4, 7 \}$, 
  \item that basis of $\Ned_{r}^{\rm snd}(T)$ is $\bbR$-invariant if $r \in \{ 0, 1, 2, 4, 5, 8 \}$.
 \end{itemize}
\end{remark}

\begin{theorem} \label{theorem:recursiveconstruction:4D}
 Let $T$ be a $4$-simplex and let $r \in \bbN_{0}$. 
 Then the bases $\calA\calP_{r}\Lambda^{2}(T)$ and $\calA\calP_{r}^{-}\Lambda^{2}(T)$ are $\bbC$-invariant. 
\end{theorem}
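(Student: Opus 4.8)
The plan is to run the recursive construction of Section~\ref{sec:recursion} with $\bbK=\bbC$, by strong induction on the polynomial degree $r$, and to verify that every constant-coefficient space that enters the recursion as a base case does admit a $\bbC$-invariant basis. Since the faces of $T$ of dimension $\le 3$ are already covered by the lower-dimensional results, the only genuinely new ingredient is the interior term over the $4$-simplex itself, which the canonical isomorphisms feed back into the recursion at strictly smaller degree.

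Concretely, I would first fix, for every subsimplex $F\subseteq T$, the bases $\calA\mathring\calP_r\Lambda^{1}(F)$ and $\calA\mathring\calP_r^{-}\Lambda^{1}(F)$ produced by the construction, and note that they transform correctly under $\Sym(T)$ — that is, $S_\pi^{\ast}\calA\mathring\calP_r\Lambda^{1}(F)=\calA\mathring\calP_r\Lambda^{1}(S_\pi F)$ — because any two faces of equal dimension lie in a single $\Sym(T)$-orbit and the construction is intrinsic, so it suffices to exhibit a $\bbC$-invariant basis for one face of each dimension. By Theorems~\ref{theorem:geodecompinvariant:fullspace} and~\ref{theorem:geodecompinvariant:whitney} it then remains to show that each such $\mathring\calP$-basis is $\bbC$-invariant. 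For $\dim F=1$ this is the top-form case, handled by Lemma~\ref{lemma:invarianceof:simpleexamples}; for $\dim F=2$ and $\dim F=3$ it follows from Corollary~\ref{corollary:recursiveconstruction:2D:mathring} and Corollary~\ref{theorem:recursiveconstruction:3D:mathring}, respectively. What is left is the interior contribution $\calA\mathring\calP_r\Lambda^{1}(T)$ and $\calA\mathring\calP_r^{-}\Lambda^{1}(T)$.

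For the interior I would chase the canonical isomorphisms of Section~\ref{sec:canonicalisomorphism}. By Corollary~\ref{corollary:isomorphisminvariantbases}, $\calA\mathring\calP_r\Lambda^{1}(T)=\calJ_{1,r}^{-1}\calA\calP^{-}_{r-3}\Lambda^{3}(T)$ is $\bbC$-invariant if and only if $\calA\calP^{-}_{r-3}\Lambda^{3}(T)$ is; decomposing the latter geometrically, the faces carrying a nonzero $3$-form are the tetrahedral faces (again the top-form case, hence $\bbR$-invariant) and $T$ itself, whose interior is $\calA\mathring\calP^{-}_{r-3}\Lambda^{3}(T)=\calI_{1,r-5}\calA\calP_{r-5}\Lambda^{1}(T)$, reducing the question to $\calP_{r-5}\Lambda^{1}(T)$ at strictly smaller degree. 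The analogous chase for $\calA\mathring\calP_r^{-}\Lambda^{1}(T)$ passes through $\calA\calP_{r-4}\Lambda^{3}(T)$ and lands on $\calP^{-}_{r-5}\Lambda^{1}(T)$. Thus the recursion cycles between the $\Lambda^{1}$- and $\Lambda^{3}$-families over $T$, dropping the degree by five each full turn, and the induction hypothesis together with Theorem~\ref{theorem:extensionscommute} closes the step — provided the spaces at the bottom of the cycles are benign.

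That last proviso is where the real work lies. By Lemma~\ref{lemma:invariantconstant:classification}, over a $4$-simplex the constant spaces $\calP_0\Lambda^{1}(T)$ and $\calP_0\Lambda^{3}(T)$ admit \emph{no} $\bbC$-invariant basis, while $\calP_0\Lambda^{2}(T)$ does (Lemma~\ref{lemma:invariantconstant:2forms4D}) — but the $\Lambda^{2}$-family never appears in the cycles above. So the crux is a precise bookkeeping of the degree shifts $r\mapsto r-3\mapsto r-5$ (and $r\mapsto r-4\mapsto r-5$) induced by $\calJ$ and $\calI$, combined with the vanishings $\calP^{-}_s\Lambda^{3}(T)=\{0\}$ and $\mathring\calP^{-}_s\Lambda^{3}(T)=\{0\}$ for small $s$, in order to determine for which residues of $r$ modulo $5$ the recursion terminates at a trivial space rather than at $\calP_0\Lambda^{1}(T)$ or $\calP_0\Lambda^{3}(T)$. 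I expect this modular bookkeeping — and, relatedly, pinning down what "the geometrically decomposed basis" means in the degenerate lowest-degree cases where the geometric decomposition theorem itself does not apply — to be the delicate point of the argument.
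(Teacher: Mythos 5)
Your setup is exactly the paper's recursive machinery, and your degree bookkeeping is correct: over the $4$-simplex the $k=1$ recursion cycles $\calP_{r}\Lambda^{1}(T)\to\calP^{-}_{r-3}\Lambda^{3}(T)\to\calP_{r-5}\Lambda^{1}(T)$ and $\calP^{-}_{r}\Lambda^{1}(T)\to\calP_{r-4}\Lambda^{3}(T)\to\calP^{-}_{r-5}\Lambda^{1}(T)$, while the faces of dimension at most three are handled by Lemma~\ref{lemma:invarianceof:simpleexamples}, Corollary~\ref{corollary:recursiveconstruction:2D:mathring} and Corollary~\ref{theorem:recursiveconstruction:3D:mathring}. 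The genuine gap is that you stop at the step you call ``modular bookkeeping'': it is not a deferrable formality, and when you carry it out the argument does \emph{not} close for the statement as written. The $\calP$-chain needs the full-space bases at degrees $r,r-5,r-10,\dots$, so for $r\in 5\bbN_{0}$ the construction requires a basis $\calA\calP_{0}\Lambda^{1}(T)$ of the constant $1$-forms over the $4$-simplex (for $r=5$ this enters through the nonzero space $\mathring\calP^{-}_{2}\Lambda^{3}(T)\simeq\calP_{0}\Lambda^{1}(T)$); the $\calP^{-}$-chain needs $\calA\calP_{0}\Lambda^{3}(T)$ whenever $r\equiv 4\pmod 5$ (through $\mathring\calP^{-}_{4}\Lambda^{1}(T)\simeq\calP_{0}\Lambda^{3}(T)$). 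By Lemma~\ref{lemma:invariantconstant:classification} neither of these constant spaces admits a $\bbC$-invariant basis, and since every symmetry maps the interior summand of the geometric decomposition to itself, invariance of the whole geometrically decomposed basis would force invariance of its interior part and then, via the two-way equivalence of Corollary~\ref{corollary:isomorphisminvariantbases}, invariance of the degree-zero basis at the bottom of the chain --- a contradiction. So along your (and the paper's) route one obtains $\bbC$-invariance of $\calA\calP_{r}\Lambda^{1}(T)$ only for $r\notin 5\bbN_{0}$ and of $\calA\calP^{-}_{r}\Lambda^{1}(T)$ only for $r\not\equiv 4\pmod 5$; for the excluded residues the constructed basis is provably not $\bbC$-invariant, and for $r=0$ no $\bbC$-invariant basis exists at all.

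This tension is not of your making: the paper's own two-line proof cites Lemma~\ref{lemma:invariantconstant:2forms4D} (constant $2$-forms) as the base case, which is consistent only with reading the theorem for $\Lambda^{2}$ rather than $\Lambda^{1}$. For $k=2$, $n=4$ the canonical isomorphisms give $\mathring\calP_{r}\Lambda^{2}(T)\simeq\calP^{-}_{r-2}\Lambda^{2}(T)$ and $\mathring\calP^{-}_{s}\Lambda^{2}(T)\simeq\calP_{s-3}\Lambda^{2}(T)$, so the recursion never leaves the $2$-form family over $T$; the triangular faces carry top forms, the tetrahedral faces are covered by Corollary~\ref{theorem:recursiveconstruction:3D:mathring}, and the only degree-zero space that can occur is $\calP_{0}\Lambda^{2}(T)$, which does have the $\bbC$-invariant basis of Lemma~\ref{lemma:invariantconstant:2forms4D}; then the claim holds for every $r\in\bbN_{0}$. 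In short, your identification of the crux is correct, but to produce a complete and correct proof you should either prove the $\Lambda^{2}$ version (which is what the paper's proof actually addresses) or restrict the $k=1$ claim to the admissible residues $r\notin 5\bbN_{0}$, respectively $r\not\equiv 4\pmod 5$.
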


\begin{proof}
 This follows the same line as the proofs of 
 Theorem~\ref{theorem:recursiveconstruction:2D} and Theorem~\ref{theorem:recursiveconstruction:3D}.
 The base case is addressed by Lemma~\ref{lemma:invariantconstant:2forms4D}. 
\end{proof}

\begin{example} \label{example:interestingproperties}
 We outline the $\bbC$-invariant basis $\calA\calP_{2}^{-}\Lambda^{1}(T)$ 
 and its geometric decomposition
 over a tetrahedron $T$. 
To an edge with vertex indices $i$ and $j$ we associate 
 the higher-degree Whitney forms $\lambda_{i}\whitney_{ij}$ and $\lambda_{j}\whitney_{ij}$.
 To each face $F$ with vertices indices $i,j,k$ we associate two forms  
 \begin{align*}
\lambda_{i} \whitney_{jk} - \xi_{3}^{ } \lambda_{j} \whitney_{ik} + \xi_{3}^{2} \lambda_{k} \whitney_{ij}
  ,
  \quad 
\lambda_{i} \whitney_{jk} - \xi_{3}^{2} \lambda_{j} \whitney_{ik} + \xi_{3}^{ } \lambda_{k} \whitney_{ij}
  ,
 \end{align*}
 where $\xi_{3} = \exp(2\iunit\pi/3 )$ as before. Finally, the forms
 \begin{align*}
2( \lambda_{0}\lambda_{1} \whitney_{23} + \lambda_{0}\lambda_{2} \whitney_{13} - \lambda_{1}\lambda_{3} \whitney_{02} - \lambda_{2}\lambda_{3} \whitney_{01} )
  ,
  \\
2( \lambda_{0}\lambda_{2} \whitney_{13} + \lambda_{0}\lambda_{3} \whitney_{12} + \lambda_{1}\lambda_{2} \whitney_{03} + \lambda_{1}\lambda_{3} \whitney_{02} )
  ,
  \\
2( \lambda_{0}\lambda_{3} \whitney_{12} - \lambda_{0}\lambda_{1} \whitney_{23} - \lambda_{1}\lambda_{2} \whitney_{03} - \lambda_{2}\lambda_{3} \whitney_{01} )
  .
 \end{align*}
 are associated with the tetrahedron itself.

We study the elementwise mass matrix in the special case where $T$ is a regular tetrahedron.  
 In contrast to scalar forms, some non-trivial orthogonality relations arise. 
One checks that the bases $\calA\calP_{0}\Lambda^{1}(T)$ and $\calA\calP_{0}\Lambda^{2}(T)$
 are each orthogonal with respect to the respective standard products. 
 One verifies the same for each of the bases
 $\calA\mathring\calP_{2}^{-}\Lambda^{2}(T)$ and $\calA\mathring\calP_{3}^{-}\Lambda^{1}(T)$,
 obtained along the respective canonical isomorphisms.
 Similarly,
 suppose that $F$ is a regular triangle, for example, a face of $T$. 
 Calculations show that the basis $\calA\calP_{0}\Lambda^{1}(F)$
 is orthogonal with respect to the standard products,
 as is $\calA\mathring\calP_{2}^{-}\Lambda^{1}(F)$ 
 and the extension $\ext^{1,2,-}_{F,T} \calA\mathring\calP_{2}^{-}\Lambda^{1}(F)$. 
While equilateral triangles and regular tetrahedra are obviously idealized special cases,
 these exemplary orthogonality relations suggest further computational studies. 
\end{example}

\begin{remark}
 Our search for invariant bases has led to positive results: 
 Theorem~\ref{theorem:recursiveconstruction:2D}, Theorem~\ref{theorem:recursiveconstruction:3D}, and Theorem~\ref{theorem:recursiveconstruction:4D} 
 explicitly construct $\bbC$-invariant geometrically decomposed bases. 
 We have pointed out conditions on the polynomial degree for the existence of bases
 that are $\bbR$-invariant, that is, invariant under permutation of indices up to sign change. 
 These conditions on $\bbR$-invariance are sufficient and we conjecture that they are necessary as well.

 If instead one of the finite element spaces above has an $\bbR$-invariant basis
 for a polynomial degree $r$ not listed above,
 then that basis must satisfy 
 constraints related to geometric decompositions in the sense of Section~\ref{sec:extension}. 
 Because of Theorem~\ref{theorem:geometricdecomposition:sullivan} and \ref{theorem:geometricdecomposition:whitney},
 any geometrically decomposed $\bbR$-invariant basis 
 induces $\bbR$-invariant bases on subsimplices and for finite element spaces with vanishing boundary traces,
 and we can then use a canonical isomorphism to get $\bbR$-invariant bases on spaces of lower degree. 
 
 For example, Djokovi\'c and Malzan's results already rule out $\bbR$-invariant bases 
 for several spaces of constant $k$-forms.
 Going further, 
 there is no $\bbR$-invariant basis for $\mathring\calP_{2}^{-}\Lambda^{1}(F)$
 or 
 geometrically decomposed $\bbR$-invariant basis for $\calP_{2}^{-}\Lambda^{1}(F)$
 over a triangle $F$.
A fortiori, 
 $\calP_{2}^{-}\Lambda^{1}(T)$ over a tetrahedron $T$ has no geometrically decomposed $\bbR$-invariant basis.
\end{remark}

\begin{remark} Throughout this section, we have emphasized the ``bottom-up'' recursive construction of higher-degree bases. 
 Conversely, given a basis for a higher-degree finite element space, 
 we can follow the recursion ``top-down'',   
 under the condition that the geometric decomposition can be used at every step. 
 The latter condition is not necessarily true for every basis. 
 In any case, no further recursion step is possible whenever the recursion leads to zeroth-degree finite element spaces: 
 there, no geometric decomposition exists. 
\end{remark}

\paragraph{Acknowledgement}
The author would like to thank Dragomir Djokovi\'c for helpful advice relating to Lemma~\ref{lemma:invariantconstant:2forms4D}, 
and the anonymous referees for numerous helpful remarks.
Moreover, the author acknowledges the hospitality of the University of Minnesota, where parts of this research was done. 
Helpful discussions with Douglas N.\ Arnold and Snorre H.\ Christiansen are gratefully appreciated.


\begin{thebibliography}{10}

\bibitem{ainsworth2011bernstein}
{\sc M.~Ainsworth, G.~Andriamaro, and O.~Davydov}, {\em Bernstein-{B}{\'e}zier
  finite elements of arbitrary order and optimal assembly procedures}, SIAM
  Journal on Scientific Computing, 33 (2011), pp.~3087--3109.

\bibitem{ainsworth2015bernstein}
\leavevmode\vrule height 2pt depth -1.6pt width 23pt, {\em A
  {B}ernstein--{B\'e}zier basis for arbitrary order {R}aviart--{T}homas finite
  elements}, Constructive Approximation, 41 (2015), pp.~1--22.

\bibitem{ainsworth2003hierarchic}
{\sc M.~Ainsworth and J.~Coyle}, {\em Hierarchic finite element bases on
  unstructured tetrahedral meshes}, International journal for numerical methods
  in engineering, 58 (2003), pp.~2103--2130.

\bibitem{arnold2013finite}
{\sc D.~Arnold and G.~Awanou}, {\em Finite element differential forms on
  cubical meshes}, Mathematics of Computation,  (2013).

\bibitem{arnold2011serendipity}
{\sc D.~N. Arnold and G.~Awanou}, {\em The serendipity family of finite
  elements}, Foundations of Computational Mathematics, 11 (2011), pp.~337--344.

\bibitem{arnold2013mixed}
{\sc D.~N. Arnold, G.~Awanou, and W.~Qiu}, {\em Mixed finite elements for
  elasticity on quadrilateral meshes}, arXiv preprint arXiv:1306.6821,  (2013).

\bibitem{arnold2012mixed}
{\sc D.~N. Arnold, R.~S. Falk, and J.~Gopalakrishnan}, {\em Mixed finite
  element approximation of the vector {Laplacian} with {Dirichlet} boundary
  conditions}, Mathematical Models and Methods in Applied Sciences, 22 (2012),
  p.~1250024.

\bibitem{AFW1}
{\sc D.~N. Arnold, R.~S. Falk, and R.~Winther}, {\em Finite element exterior
  calculus, homological techniques, and applications}, Acta Numerica, 15
  (2006), pp.~1--155.

\bibitem{AFWgeodecomp}
\leavevmode\vrule height 2pt depth -1.6pt width 23pt, {\em Geometric
  decompositions and local bases for spaces of finite element differential
  forms}, Computer Methods in Applied Mechanics and Engineering, 198 (2009),
  pp.~1660--1672.

\bibitem{AFW2}
\leavevmode\vrule height 2pt depth -1.6pt width 23pt, {\em Finite element
  exterior calculus: from {Hodge} theory to numerical stability}, Bulletin of
  the American Mathematical Society, 47 (2010), pp.~281--354.

\bibitem{bentley2017explicit}
{\sc A.~Bentley}, {\em Explicit construction of computational bases for
  ${RT}_k$ and ${BDM}_k$ spaces in ${R}^3$}, Computers \& Mathematics with
  Applications, 73 (2017), pp.~1421--1432.

\bibitem{beuchler2012sparsity}
{\sc S.~Beuchler, V.~Pillwein, and S.~Zaglmayr}, {\em Sparsity optimized high
  order finite element functions for {$H(\divergence)$} on simplices},
  Numerische Mathematik, 122 (2012), pp.~197--225.

\bibitem{bloem2019probabilistic}
{\sc B.~Bloem-Reddy and Y.~W. Teh}, {\em Probabilistic symmetry and invariant
  neural networks}, arXiv preprint arXiv:1901.06082,  (2019).

\bibitem{brezzi1985two}
{\sc F.~Brezzi, J.~Douglas, and L.~D. Marini}, {\em Two families of mixed
  finite elements for second order elliptic problems}, Numerische Mathematik,
  47 (1985), pp.~217--235.

\bibitem{christiansen2016high}
{\sc S.~Christiansen and F.~Rapetti}, {\em On high order finite element spaces
  of differential forms}, Mathematics of Computation, 85 (2016), pp.~517--548.

\bibitem{StructPresDisc}
{\sc S.~H. Christiansen, H.~Z. Munthe-Kaas, and B.~Owren}, {\em Topics in
  structure-preserving discretization}, Acta Numerica, 20 (2011), pp.~1--119.

\bibitem{christiansen2008smoothed}
{\sc S.~H. Christiansen and R.~Winther}, {\em Smoothed projections in finite
  element exterior calculus}, Mathematics of Computation, 77 (2008),
  pp.~813--829.

\bibitem{cools2017optimality}
{\sc S.~Cools and W.~Vanroose}, {\em On the optimality of shifted {L}aplacian
  in a class of polynomial preconditioners for the {H}elmholtz equation}, in
  Modern Solvers for Helmholtz Problems, Springer, 2017, pp.~53--81.

\bibitem{curtis1966representation}
{\sc C.~W. Curtis and I.~Reiner}, {\em Representation theory of finite groups
  and associative algebras}, vol.~356, American Mathematical Soc., 1966.

\bibitem{demlow2014posteriori}
{\sc A.~Demlow and A.~Hirani}, {\em A posteriori error estimates for finite
  element exterior calculus: The de {Rham} complex}, Foundations of
  Computational Mathematics,  (2014), pp.~1--35.

\bibitem{djokovic1975monomial}
{\sc D.~Djokovi{\'c} and J.~Malzan}, {\em Monomial irreducible characters of
  the symmetric group}, Journal of Algebra, 35 (1975), pp.~153--158.

\bibitem{ervin2012computational}
{\sc V.~Ervin}, {\em Computational bases for ${RT}_k$ and ${BDM}_k$ on
  triangles}, Computers \& Mathematics with Applications, 64 (2012),
  pp.~2765--2774.

\bibitem{fulton2013representation}
{\sc W.~Fulton and J.~Harris}, {\em Representation theory: a first course},
  vol.~129, Springer Science \& Business Media, 2013.

\bibitem{garcia2017monomial}
{\sc M.~I. Garc{\'\i}a-Planas, M.~D. Magret, and L.~E. Um}, {\em Monomial codes
  seen as invariant subspaces}, Open Mathematics, 15, pp.~1099--1107.

\bibitem{gillette2019trimmed}
{\sc A.~Gillette and T.~Kloefkorn}, {\em Trimmed serendipity finite element
  differential forms}, Mathematics of Computation, 88 (2019), pp.~583--606.

\bibitem{gopalakrishnan2005nedelec}
{\sc J.~Gopalakrishnan, L.~E. Garc{\'\i}a-Castillo, and L.~F. Demkowicz}, {\em
  N{\'e}d{\'e}lec spaces in affine coordinates}, Computers \& Mathematics with
  Applications, 49 (2005), pp.~1285--1294.

\bibitem{hiptmair2002finite}
{\sc R.~Hiptmair}, {\em Finite elements in computational electromagnetism},
  Acta Numerica, 11 (2002), pp.~237--339.

\bibitem{HS1}
{\sc M.~Holst and A.~Stern}, {\em Geometric variational crimes: {H}ilbert
  complexes, finite element exterior calculus, and problems on hypersurfaces},
  Arxiv preprint arXiv:1005.4455,  (2010).

\bibitem{jantzen2007representations}
{\sc J.~C. Jantzen}, {\em Representations of algebraic groups}, no.~107 in
  Mathematical Surveys and Monographs, American Mathematical Soc., 2007.

\bibitem{kirby2014low}
{\sc R.~C. Kirby}, {\em Low-complexity finite element algorithms for the de
  {R}ham complex on simplices}, SIAM Journal on Scientific Computing, 36
  (2014), pp.~A846--A868.

\bibitem{kirby2012fast}
{\sc R.~C. Kirby and K.~T. Thinh}, {\em Fast simplicial quadrature-based finite
  element operators using {B}ernstein polynomials}, Numerische Mathematik, 121
  (2012), pp.~261--279.

\bibitem{licht2016mixed}
{\sc M.~Licht}, {\em Smoothed projections and mixed boundary conditions},
  Mathematics of Computation, 88 (2019), pp.~607--635.

\bibitem{licht2019smoothed}
\leavevmode\vrule height 2pt depth -1.6pt width 23pt, {\em Smoothed projections
  over weakly {L}ipschitz domains}, Mathematics of Computation, 88 (2019),
  pp.~179--210.

\bibitem{licht2022basis}
\leavevmode\vrule height 2pt depth -1.6pt width 23pt, {\em On basis
  constructions in finite element exterior calculus}, Advances in Computational
  Mathematics,  (2022).
\newblock Accepted for publication.

\bibitem{maschke1899beweis}
{\sc H.~Maschke}, {\em {Beweis des Satzes, dass diejenigen endlichen linearen
  Substitutionsgruppen, in welchen einige durchgehends verschwindende
  Coefficienten auftreten, intransitiv sind}}, Mathematische Annalen, 52
  (1899), pp.~363--368.

\bibitem{munthe2016groups}
{\sc H.~Z. Munthe-Kaas}, {\em Groups and symmetries in numerical linear
  algebra}, in Exploiting Hidden Structure in Matrix Computations: Algorithms
  and Applications, Springer, 2016, pp.~319--406.

\bibitem{nedelec1986new}
{\sc J.-C. N{\'e}d{\'e}lec}, {\em A new family of mixed finite elements in
  ${R}^3$}, Numerische Mathematik, 50 (1986), pp.~57--81.

\bibitem{ore1942theory}
{\sc O.~Ore}, {\em Theory of monomial groups}, Transactions of the American
  Mathematical Society, 51 (1942), pp.~15--64.

\bibitem{puschel1999constructive}
{\sc M.~P{\"u}schel}, {\em Constructive representation theory and fast discrete
  signal transforms}, tech. rep., Drexel University, 1999.
\newblock Technical Report Drexel-MCS-1999-1.

\bibitem{puschel2002decomposing}
\leavevmode\vrule height 2pt depth -1.6pt width 23pt, {\em Decomposing monomial
  representations of solvable groups}, Journal of Symbolic Computation, 34
  (2002), pp.~561--596.

\bibitem{raviart1977mixed}
{\sc P.-A. Raviart and J.-M. Thomas}, {\em A mixed finite element method for
  2-nd order elliptic problems}, in Mathematical aspects of finite element
  methods, Springer, 1977, pp.~292--315.

\bibitem{schoberl2005high}
{\sc J.~Sch{\"o}berl and S.~Zaglmayr}, {\em High order {N}{\'e}d{\'e}lec
  elements with local complete sequence properties}, COMPEL-The international
  journal for computation and mathematics in electrical and electronic
  engineering, 24 (2005), pp.~374--384.

\bibitem{scott2012group}
{\sc W.~R. Scott}, {\em Group theory}, Courier Corporation, 2012.

\bibitem{serre1977linear}
{\sc J.-P. Serre}, {\em Linear representations of finite groups}, vol.~42,
  Springer, 77.

\bibitem{stiefel2012group}
{\sc E.~Stiefel and A.~F{\"a}ssler}, {\em Group theoretical methods and their
  applications}, Springer Science \& Business Media, 2012.

\end{thebibliography}
\end{document}